\numberwithin{equation}{section}
\definecolor{myred}{HTML}{FF3D3D}
\definecolor{mycyan}{HTML}{0474BE}
\definecolor{mygreen}{HTML}{1EB6D6}
\pgfplotsset{width=.36\textwidth,compat=1.12}
\newcommand{\hili}[2][mycyan]{\textcolor{#1}{#2}}
\newcommand{\jump}[1]{\llbracket #1 \rrbracket}
\newcommand{\jumprecon}[1]{\llangle #1 \rrangle}
\newcommand{\R}{\mathbb{R}}
\newcommand{\N}{\mathbb{N}}
\newcommand{\Z}{\mathbb{Z}}
\newcommand{\eps}{\varepsilon}
\newcommand{\dist}{\operatorname{dist}}
\newcommand{\OpR}{\mathcal{R}}
\newcommand{\cell}{\mathcal{C}}
\newcommand{\timecell}{\mathcal{T}}
\newcommand{\I}{\mathcal{I}}
\newcommand{\minmod}{\operatorname{minmod}}
\newcommand{\OpLL}{\mathcal{L}_{\rm L}}
\newcommand{\OpLNL}{\mathcal{L}_{\rm NL}}
\newcommand{\sign}{\operatorname{sign}}
\newcommand{\supp}{\operatorname{supp}}
\newcommand{\Woneinfty}{\rm{W}^{1,\infty}(\R)}
\newcommand{\Linfty}{\rm{L}^{\infty}}
\newcommand{\Lone}{\rm{L}^1}
\newcommand{\Loneloc}{\Lone_{\operatorname{loc}}}
\newcommand{\TV}{\operatorname{TV}}
\newcommand{\BV}{\rm{BV}}
\newcommand{\Dx}{{\Delta x}}
\newcommand{\Dt}{{\Delta t}}
\newcommand{\hf}{{\unitfrac{1}{2}}}
\newcommand{\thf}{{\unitfrac{3}{2}}}
\newcommand{\jphf}{{j+\hf}}
\newcommand{\jmhf}{{j-\hf}}
\newcommand{\jpkmhf}{{j+k-\hf}}
\newcommand{\jmkphf}{{j-k+\hf}}
\newcommand{\jmlphf}{{j-l+\hf}}
\newcommand{\jplmhf}{{j+l-\hf}}
\newcommand{\jplmthf}{{j+l-\thf}}
\newcommand{\jmlpthf}{{j-l+\thf}}
\newcommand{\jpkphf}{{j+k+\hf}}
\newcommand{\Approx}{\operatorname{L}}
\newcommand*\diff{\mathop{}\!\mathrm{d}}
\newcommand{\ind}{{\mathbbm{1}}}		
\newtheorem{theorem}{Theorem}[section]
\newtheorem{lemma}[theorem]{Lemma}
\newtheorem{proposition}[theorem]{Proposition}
\newtheorem{definition}[theorem]{Definition}
\newtheorem{rem}[theorem]{Remark}
\title{Second-order accurate TVD numerical methods for\\ nonlocal nonlinear conservation laws}
\author{\textsc{Ulrik S.~Fjordholm}\thanks{Department of Mathematics, University of Oslo, Norway (\url{ulriksf@math.uio.no})}
\and \textsc{Adrian\,M.~Ruf}\thanks{Seminar for Applied Mathematics, Department of Mathematics, ETH Z\"urich, Switzerland (\url{adrian.ruf@sam.math.ethz.ch})}}
\date{}
\begin{document}

\maketitle
\begin{abstract}
We present a second-order accurate numerical method for a class of nonlocal nonlinear conservation laws called the "nonlocal pair-interaction model" which was recently introduced by Du, Huang, and LeFloch. Our numerical method uses second-order accurate reconstruction-based schemes for local conservation laws in conjunction with appropriate numerical integration. We show that the resulting method is total variation diminishing (TVD) and converges towards a weak solution. In fact, in contrast to local conservation laws, our second-order reconstruction-based method converges towards the unique entropy solution provided that the nonlocal interaction kernel satisfies a certain growth condition near zero. Furthermore, as the nonlocal horizon parameter in our method approaches zero we recover a well-known second-order method for local conservation laws. In addition, we answer several questions from the paper from Du, Huang, and LeFloch concerning regularity of solutions. In particular, we prove that any discontinuity present in a weak solution must be stationary and that, if the interaction kernel satisfies a certain growth condition, then weak solutions are unique. We present a series of numerical experiments in which we investigate the accuracy of our second-order scheme, demonstrate shock formation in the nonlocal pair-interaction model, and examine how the regularity of the solution depends on the choice of flux function.
%
%
\end{abstract}

\paragraph{Key words.}
hyperbolic conservation laws, nonlocal model, higher-order numerical methods, increased regularity

\paragraph{AMS subject classification.}
35L65, 65M12, 35L67, 65R20



\section{Introduction}
\subsection{Objective of the paper}
We consider the `nonlocal pair-interaction model'
\begin{gather}
\begin{aligned}
	\frac{\partial u}{\partial t} + \int_0^{\delta}\frac{g(u,\tau_h u)-g(\tau_{-h}u,u)}{h}\omega_{\delta}(h)\diff h = 0,& & &(x,t)\in\R\times(0,T),\\
	u(x,0) = u_0(x),& &&x\in\R,
\end{aligned}
\label{nonlocal model}
\end{gather}
which is a nonlocal variant of the (local) scalar conservation law
\begin{gather}
\begin{aligned}
	\frac{\partial u}{\partial t} +\frac{\partial f(u)}{\partial x} = 0,& & &(x,t)\in\R\times(0,T),\\
	u(x,0) = u_0(x),& &&x\in\R.
\end{aligned}
\label{local claw}
\end{gather}
Here, $g$ is a two-point, monotone flux function that is consistent with the local flux $f$ in the sense that $g(u,u)=f(u)$, $\omega_\delta$ is a kernel characterizing nonlocal interactions, and $\tau_{\pm h} u(x,t) = u(x\pm h,t)$ denotes the standard shift operator in space. The nonlocal pair-interaction model was introduced by Du, Huang, and LeFloch in \cite{du2017nonlocal} where the authors established existence and uniqueness of entropy solutions. Their global existence result is based on the convergence of a first-order accurate finite volume method inspired by first-order finite volume methods for (local) conservation laws. In \cite{du2017numerical} Du and Huang further presented numerical experiments for this first-order scheme.


Our first goal is to \emph{design a second-order accurate numerical method for the nonlocal model}~\eqref{nonlocal model} that is asymptotically compatible with a second-order scheme for the local conservation law~\eqref{local claw}. The method we construct is based on second-order accurate reconstruction-based schemes coupled with a trapezoidal rule to numerically approximate the weighted integral in~\eqref{nonlocal model}.

Our second goal is to show that \emph{solutions of the nonlocal model have more regularity} as compared to solutions of local conservation laws. Specifically, we will show that \emph{weak solutions of~\eqref{nonlocal model} can only exhibit stationary discontinuities}. This improved regularity of the nonlocal model substantiates the usefulness and practicality of higher-order schemes like the one presented here.

\subsection{Background on the nonlocal pair-interaction model}

The nonlocal pair-interaction model is a very recent contribution to the problem of modeling nonlocal convection (see \cite{du2012new} for an extensive overview of other contributions). One key feature of the nonlocal pair-interaction model which many other models do not share is the explicit use of the nonlocal horizon parameter $\delta$ to characterize nonlocal interactions. This is inspired by the same notion used in peridynamics, see \cite{silling2000reformulation}. Another feature of the nonlocal pair-interaction model is that, as the nonlocal horizon parameter vanishes, the nonlocal model \eqref{nonlocal model} reduces to the local conservation law \eqref{local claw} \cite{du2017numerical}. This is to be contrasted to other nonlocal models which do not enjoy this property, see e.g. \cite{Colombo2019}.

A different nonlocal, nonlinear model with interactions over a finite horizon was proposed in \cite{du2012new}; however, only local existence results could be established due to the lack of a maximum principle.
The nonlocal pair-interaction model, on the other hand, enjoys the maximum principle and generally shares many properties of local conservation laws, see \cite{du2017nonlocal}.

Let $\Dx>0$ be the spatial discretization parameter and let $x_j=j\Dx$ and $x_\jphf = (\jphf)\Dx$ denote the midpoints and endpoints of the spatial grid cells $\cell_j=(x_\jmhf,x_\jphf)$. In order to show existence of solutions to the nonlocal pair-interaction model Du, Huang, and LeFloch \cite{du2017nonlocal} used the numerical scheme
\begin{gather}
\begin{aligned}
	u_j^{n+1} &= u_j^n - \Dt \smashoperator[l]{\sum_{k=1}^{\max\{r,1\}}}\frac{g(u_j^n,u_{j+k}^n) - g(u_{j-k}^n,u_j^n)}{k\Dx} W_k,\\
	u_j^0 &= \frac{1}{\Dx}\int_{\cell_j} u_0(x)\diff x,
\end{aligned}
\label{first-order numerical scheme}
\end{gather}
where $r=\lfloor\frac{\delta}{\Dx}\rfloor$ and the weights $W_k$ are given by
\begin{equation*}
	W_k = \int_{(k-1)\Dx}^{k\Dx} \omega_\delta(h)\diff h + \ind_{k=r}\int_{r\Dx}^\delta \omega_\delta(h)\diff h,\qquad k=1,\ldots,r.
\end{equation*}
By keeping the spatial grid size $\Dx$ fixed and letting $\delta\to 0$, the first equation in \eqref{first-order numerical scheme} reduces to the standard monotone finite volume scheme
\begin{equation}
	u_j^{n+1} = u_j^n - \frac{\Dt}{\Dx}\left(g(u_j^n,u_{j+1}^n) - g(u_{j-1}^n,u_j^n)\right)
	\label{first-order local numerical scheme}
\end{equation}
for the local conservation law \eqref{local claw}.
The nonlocal scheme \eqref{first-order numerical scheme} admits an analysis very similar to that of standard monotone schemes, detailed for example in \cite{crandall1980monotone}.

Moreover, as both $\delta$ and $\Dx$ vanish the scheme \eqref{first-order numerical scheme} converges to the entropy solution of the local conservation law \eqref{local claw}, see \cite{du2017nonlocal}. This leads to the so-called asymptotic compatibility of the numerical scheme, as defined in \cite{tian2014asymptotically}, for the nonlocal model \eqref{nonlocal model}.

\subsection{Background on second-order TVD schemes for local conservation laws}

The numerical scheme \eqref{first-order numerical scheme}, studied in \cite{du2017nonlocal,du2017numerical}, shares the drawback of the monotone finite volume method \eqref{first-order local numerical scheme} for local conservation laws of being at most first-order accurate, see e.g.\ \cite{harten1976finite}. In the case of conservation laws, one popular way of increasing the order of accuracy is to use higher-order reconstructed approximations instead of piecewise constant values in monotone schemes. This stems from an idea by van Leer, see \cite{van1979towards}.

Given cell averages $u_j$ at time $t$ defining a spatially piecewise constant function $u_\Dx(x,\cdot) = u_j$, one can construct a piecewise linear function
\begin{equation*}
	\OpR u (x,t) = u_j + \sigma_j \frac{x-x_j}{\Dx},\qquad x\in\cell_j,
\end{equation*}
(see e.g. \cite{godlewski1991hyperbolic,leveque2002finite}). The slopes $\sigma_j$ are selected using an appropriate limiter depending on $u_{j-1},u_j$, and $u_{j+1}$, for example the minmod limiter \cite{leveque2002finite}.
The right and left edge values
\begin{align*}
	u_j^+ &= \lim_{x\to x_\jphf-} \OpR u (x,t) = u_j + \frac{1}{2}\sigma_j\qquad \text{and} \qquad u_j^- = \lim_{x\to x_\jmhf+} \OpR u (x,t) = u_j - \frac{1}{2}\sigma_j
\end{align*}
in the cell $\cell_j$ can then be used instead of the cell averages to give the second-order accurate, semi-discrete finite volume method
\begin{equation}
	\frac{\diff u_j}{\diff t} + \frac{g(u_j^+,u_{j+1}^-)-g(u_{j-1}^+,u_j^-)}{\Dx} = 0.
	\label{second-order local numerical scheme}
\end{equation}
In order for the method to be total variation diminishing (TVD) the slopes have to satisfy
\begin{equation*}
	-2 \leq \frac{\sigma_{j+1}-\sigma_j}{u_{j+1}-u_j} \leq 2,
\end{equation*}
see \cite{sweby1984high}. The TVD property is enough to conclude that limits of the scheme, as $\Dx\to 0$, are at least weak solutions of the conservation law \eqref{local claw}, cf.~\cite{lax1960systems}, but so far no proof that any second-order scheme converges towards the entropy solution of the local conservation law~\eqref{local claw} is available in the literature.


\subsection{Outline of this paper}

The rest of this paper is structured as follows.
In \Cref{sec: Regularity} we define the notions of weak and entropy solutions of \eqref{nonlocal model} and prove that if the nonlocal interaction kernel satisfies a certain growth condition near zero, then those two notions coincide. \Cref{sec: Regularity} further contains two regularity results: We show that weak solutions of the nonlocal model can only exhibit stationary shocks and that traveling wave solutions are either stationary or smooth.
In \Cref{sec: A second-order scheme} we then detail the construction of our second-order scheme. To that end, we first consider the numerical approximation of the weighted integral in \eqref{nonlocal model} and then a suitable time discretization.
We note that the procedure developed in \Cref{sec: A second-order scheme} can readily be modified to higher orders.
Next, we prove certain properties of the forward Euler time discretization, such as the discrete maximum principle and the TVD property, which are then used to show that the scheme converges and that the limit is a weak solution to \eqref{nonlocal model} with a Lax--Wendroff-type theorem. This section also includes a novel nonlocal generalization of Harten's lemma (cf.~\cite{harten1983high}) that is interesting in its own regard. In \Cref{sec: Numerical experiments} we present a series of numerical experiments for the second-order scheme: First, we compare it to the first-order scheme presented in \cite{du2017nonlocal,du2017numerical} and then to a second-order scheme for the local conservation law. Further experiments underpin our findings in \Cref{sec: Regularity} with regards to the regularity of solutions of the nonlocal model and demonstrate asymptotical compatibility with the local entropy solution.


\section{Regularity of weak solutions to the nonlocal model}\label{sec: Regularity}
In this section we will show that solutions of the nonlocal model are more regular than solutions of local conservation laws. To this end, we will first prove that for a certain class of nonlocal interaction kernels weak solutions of the nonlocal model are in fact entropy solutions and hence unique.
Further, we use a Rankine--Hugoniot-type argument to show that any discontinuities present in a weak solution of the nonlocal model must necessarily be stationary.
Lastly, we will show that, for a certain class of nonlocal interaction kernels, traveling wave solutions are smooth.

Throughout this paper we will consider nonlocal interaction kernels $\omega_\delta\colon\R\to\R$ satisfying
\begin{equation*}
	\omega_\delta \geq 0, \qquad \supp \omega_\delta \subseteq [0,\delta],\qquad\text{and}\qquad \int_0^\delta \omega_\delta(h)\diff h=1
\end{equation*}
and numerical fluxes $g\colon\R\times\R\to \R$ which are consistent with a flux $f$, monotone, and Lipschitz continuous, i.e.,
\begin{equation}
\begin{split}
	g(u,u) = f(u),\qquad \partial_1 g\geq 0,\qquad \partial_2 g\leq 0,\\
	\text{and}\qquad |g(u_1,v_1) - g(u_2,v_2)|\leq C(|u_1-u_2|+|v_1-v_2|).
\end{split}
	\label{Assumptions on g}
\end{equation}

As in the case of local conservation laws, we can define a notion of weak solutions for the nonlocal model.
\begin{definition}[Weak solution]
	A function $u\in\Linfty(\R\times(0,T))$ is a weak solution of the nonlocal conservation law \eqref{nonlocal model} if
	\begin{equation*}
		\int_0^T \int_\R u \frac{\partial \phi}{\partial t} \diff x\diff t + \int_\R u_0(x)\phi(x,0)\diff x + \int_0^T\int_\R \int_0^\delta \frac{\tau_h\phi-\phi}{h} g(u,\tau_h u) \omega_\delta(h) \diff h\diff x\diff t = 0
	\end{equation*}
	for all $\phi\in\mathcal{C}_c^\infty(\R\times[0,T))$.
\end{definition}
Furthermore, we will consider entropy solutions in the sense of Kru\v{z}kov as introduced by Du, Huang, and LeFloch \cite{du2017nonlocal}.
\begin{definition}[Entropy solution]\label{def: entropy solution}
	A function $u\in\Linfty(\R\times(0,T))$ is an entropy solution of the nonlocal conservation law \eqref{nonlocal model} if for all $c\in\R$
	\begin{multline*}
		\int_0^T\int_\R |u-c|\frac{\partial\phi}{\partial t}\diff x\diff t + \int_\R |u_0(x)-c|\phi(x,0)\diff x\\
		+ \int_0^T\int_\R\int_0^\delta \frac{\tau_h\phi - \phi}{h}q(u,\tau_h u;c)\omega_\delta (h)\diff h\diff x\diff t \geq 0
	\end{multline*}
	for all nonnegative $\phi\in\mathcal{C}^\infty_c(\R\times[0,T))$. Here $q$ is the nonlocal entropy flux corresponding to the entropy $\eta(u,c) = |u-c|$, defined as\footnote{Note that the second line of \Cref{eq: numerical entropy flux} is not identical to the corresponding equation in \cite[p. 2470]{du2017nonlocal}, which we believe to be a misprint.}
	\begin{align}
		q(a,b;c) &= g(a\vee c,b\vee c) - g(a\wedge c, b\wedge c) \notag\\
		&= \sign(a-c)\sign(b-c)\bigg( \frac{\sign(a-c)+\sign(b-c)}{2}(g(a,b) - g(c,c)) \label{eq: numerical entropy flux}\\
		&\mathrel{\phantom{=}}\phantom{\sign(a-c)\sign(b-c)\bigg(} + \frac{\sign(a-c)-\sign(b-c)}{2}(g(c,b) - g(a,c))\bigg) \notag
	\end{align}
\end{definition}
In \cite{du2017nonlocal}, Du, Huang, and LeFloch were able to show uniqueness of entropy solutions of \eqref{nonlocal model} using Kru\v{z}kov techniques.

\subsection{Uniqueness of weak solutions}
We will now show that if the nonlocal interaction kernel $\omega_\delta$ satisfies
\begin{equation}
	\int_0^\delta \frac{\omega_\delta(h)}{h}\diff h <\infty
	\label{growth condition on omega}
\end{equation}
then any weak solution of \eqref{nonlocal model} is in fact an entropy solution. In particular, this implies that weak solutions are unique and that the second-order scheme we construct in this paper converges towards the unique entropy solution.
Note that the condition \eqref{growth condition on omega} roughly says that $\omega_\delta$ behaves like $h^\alpha$ near zero for some $\alpha>0$. Heuristically speaking, interaction kernels satisfying \eqref{growth condition on omega} place greater weights on long-range interactions than on short-range interactions.
\begin{theorem}\label{theorem 1}
	Assume that the nonlocal interaction kernel $\omega_\delta$ satisfies \eqref{growth condition on omega}
	.
	Then any weak solution $u$ of \eqref{nonlocal model} is an entropy solution. In particular, this implies that weak solutions are unique.
\end{theorem}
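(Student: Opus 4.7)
The plan is to use the growth condition \eqref{growth condition on omega} to promote the weak formulation to a pointwise a.e.\ identity, after which a chain-rule argument combined with a classical Crandall--Majda-type cell entropy inequality yields the entropy inequality. Setting
\[
N[u](x, t) := \int_0^\delta \frac{g(u(x, t), u(x+h, t)) - g(u(x-h, t), u(x, t))}{h}\, \omega_\delta(h) \diff h,
\]
a change of variables $y = x + h$ in the ``$\tau_h \phi$'' half of the nonlocal test-function integral shows that the weak formulation is equivalent to the distributional identity $\partial_t u + N[u] = 0$ with $u(\cdot, 0) = u_0$.

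Under \eqref{growth condition on omega}, the Lipschitz continuity of $g$ together with the $\Linfty$ bound on $u$ gives a uniform estimate $|g(u, \tau_h u) - g(\tau_{-h} u, u)| \leq M$ with $M$ depending only on $\|u\|_{\Linfty}$, so
\[
\|N[u]\|_{\Linfty(\R \times (0, T))} \leq M \int_0^\delta \frac{\omega_\delta(h)}{h} \diff h < \infty.
\]
Since $\partial_t u = -N[u] \in \Linfty$, a standard argument shows that $u$ admits a representative satisfying $u(\cdot, t) = u_0 - \int_0^t N[u](\cdot, s) \diff s$, which is Lipschitz in $t$ with values in $\Linfty(\R)$; in particular, for a.e.\ $x$ the map $t \mapsto u(x, t)$ is absolutely continuous, so the chain rule yields $\partial_t |u(x, t) - c| = -\sign(u(x, t) - c)\, N[u](x, t)$ a.e.\ in $\R \times (0, T)$ for every $c \in \R$.

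The final ingredient is the classical semi-discrete Crandall--Majda cell entropy inequality: for all $a, b, c, d \in \R$, the monotonicity of $g$ combined with the identity $v \vee c - v \wedge c = |v - c|$ gives
\[
\sign(b - c)(g(b, d) - g(a, b)) \geq q(b, d; c) - q(a, b; c),
\]
with $q$ as in \eqref{eq: numerical entropy flux}. Applying this pointwise in $h$ with $a = u(x-h, t)$, $b = u(x, t)$, $d = u(x+h, t)$, multiplying by $\omega_\delta(h)/h$ and integrating in $h$, and combining with the chain-rule identity above, yields
\[
\partial_t |u - c| + \int_0^\delta \frac{q(u, \tau_h u; c) - q(\tau_{-h} u, u; c)}{h}\, \omega_\delta(h) \diff h \leq 0 \quad \text{a.e.\ in } \R \times (0, T).
\]
Multiplying by a nonnegative $\phi \in \mathcal{C}_c^\infty(\R \times [0, T))$, integrating, and reversing the integration by parts in $t$ and the change of variables in $x$ recovers exactly the inequality in \Cref{def: entropy solution}. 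Uniqueness of weak solutions then follows from the uniqueness of entropy solutions proved in \cite{du2017nonlocal}.

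The main obstacle is the passage from the distributional equation $\partial_t u + N[u] = 0$ to pointwise time regularity of $u$; this step is precisely where \eqref{growth condition on omega} is essential, since without it $N[u]$ is only a distribution and the classical chain rule is unavailable. The remaining steps --- the two changes of variables, the cell entropy inequality, and the final integration by parts --- are essentially bookkeeping.
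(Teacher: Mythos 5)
Your proposal is correct and follows essentially the same route as the paper: use the growth condition \eqref{growth condition on omega} to show the nonlocal term is in $\Linfty$, upgrade the weak formulation to a pointwise a.e.\ identity via temporal Lipschitz regularity (the paper's \Cref{prop:lipschitz}), apply the chain rule to $|u-c|$, and conclude with the Crandall--Majda-type inequality $\sign(u-c)\bigl(g(u,\tau_h u)-g(\tau_{-h}u,u)\bigr)\geq q(u,\tau_h u;c)-q(\tau_{-h}u,u;c)$, which the paper verifies case by case using the monotonicity of $g$ and you rightly invoke as classical.
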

\begin{proof}
Let $u$ be a weak solution.
Note that because of the assumption \eqref{growth condition on omega} 
we can rewrite
\begin{equation}
		\int_\R \int_0^\delta \frac{\tau_h\phi-\phi}{h} g(u,\tau_h u) \omega_\delta(h)\diff h\diff x
		=-\int_\R \int_0^\delta \frac{g(u,\tau_h u) - g(\tau_{-h}u,u)}{h} \omega_\delta(h)\phi\diff h\diff x.
		\label{eqn: transfer of difference operator}
\end{equation}
Using the Lipschitz continuity of $g$ and the fact that $u$ is bounded in $\mathrm{L}^\infty$ (see~\cite[Eq. 4.8]{du2017nonlocal}), we get for every $\phi\in\mathcal{C}^\infty_c(\R\times (0,T))$
\begin{align*}
	\int_0^T \int_\R u \frac{\partial\phi}{\partial t} \diff x \diff t
	&= \int_0^T\int_\R\int_0^\delta \frac{g(u,\tau_h u) - g(\tau_{-h}u,u)}{h} \omega_\delta (h)\diff h \phi \diff x\diff t\\
	&\leq C\int_0^T \int_\R \int_0^\delta (|u(x+h)-u(x)|+|u(x)-u(x-h)|)\frac{\omega_\delta(h)}{h}\diff h |\phi(x,t)| \diff x\diff t\\
	&\leq C \|u\|_{\mathrm{L}^\infty((0,T)\times\R)}\int_0^T \int_\R \int_0^\delta \frac{\omega_\delta(h)}{h}\diff h |\phi(x,t)| \diff x\diff t\\
	&= C \|u_0\|_{\mathrm{L}^\infty(\R)}\|\phi\|_{\mathrm{L}^1((0,T)\times\R)} \int_0^\delta \frac{\omega_\delta(h)}{h}\diff h.
\end{align*}
Because of the assumption $\int_0^\delta \frac{\omega_\delta(h)}{h}\diff h <\infty$, we therefore have
\begin{equation*}
	\int_0^T \int_\R u \frac{\partial\phi}{\partial t} \diff x\diff t \leq C \|\phi\|_{\mathrm{L}^1((0,T)\times\R)}
\end{equation*}
which implies that $u(x,\cdot)$ is Lipschitz continuous for almost every~$x\in\R$ (see \Cref{prop:lipschitz} in \Cref{app:lipschitz}) and the weak solution $u$ satisfies~\eqref{nonlocal model} pointwise almost everywhere. If we then multiply~\eqref{nonlocal model} by $\sign(u-c)$ and use the chain rule to simplify $\sign(u-c) \frac{\partial u}{\partial t} = \frac{\partial}{\partial t}|u-c|$ we get
\begin{equation*}
	\frac{\partial}{\partial t}|u-c| + \int_0^\delta \sign(u-c)(g(u,\tau_h u) - g(\tau_{-h}u,u)) \frac{\omega_\delta(h)}{h}\diff h = 0
\end{equation*}
in the distributional sense. In contrast to that, $u$ is an entropy solution in the sense of \Cref{def: entropy solution} if
\begin{equation*}
	\frac{\partial}{\partial t}|u-c| + \int_0^\delta (q(u,\tau_h u;c) - q(\tau_{-h}u,u;c)) \frac{\omega_\delta(h)}{h}\diff h \leq 0
\end{equation*}
holds in the distributional sense. Thus, it remains to show that
\begin{equation*}
		\sign(u-c)(g(u,\tau_h u) - g(\tau_{-h}u,u)) \geq q(u,\tau_h u;c) - q(\tau_{-h}u,u;c).
\end{equation*}
This can be easily verified case by case. If $u\geq c$ then
\begin{align*}
	\sign(u-c)(g(u,\tau_h u) - &g(\tau_{-h}u,u)) - q(u,\tau_h u;c) + q(\tau_{-h}u,u;c)\\
	={} &g(u,\tau_h u) - g(\tau_{-h}u,u) - g(u,\tau_h u\vee c) + g(c,\tau_h u\wedge c)\\
	&{}+ g(\tau_{-h}u\vee c,u) - g(\tau_{-h}u\wedge c, c)\\
	={}&(g(u,\tau_h u) - g(u,\tau_h u\vee c)) + (g(\tau_{-h} u\vee c,u) - g(\tau_{-h}u,u))\\
	&{}+ (g(c,\tau_h u\wedge c) - g(c,c)) + (g(c,c) - g(\tau_{-h}u\wedge c,c)).
\end{align*}
Since $g$ is a monotone flux function, meaning $g$ is monotonically increasing in the first entry and monotonically decreasing in the second, all four terms in parentheses in the preceding line are nonnegative.
The case $u<c$ can be analyzed in the same way.
Therefore, $u$ is an entropy solution and hence, by \cite[Thm. 2.3]{du2017nonlocal}, unique.
\end{proof}

\subsection{Stationarity of discontinuities of weak solutions}
We will now show that any discontinuity in a weak solution of \eqref{nonlocal model} must be stationary. To that end we will employ a Rankine--Hugoniot-type argument (see e.g.~\cite[pp.~8--9]{holden2015front}). We will use the ``integration by parts'' lemma given in Appendix \ref{app: integration by parts}.

\begin{theorem}\label{thm: stationarity of discontinuities of weak solutions}
	Let $u$ be a weak solution of the nonlocal model~\eqref{nonlocal model} which is piecewise $\mathcal{C}^1$ with an isolated discontinuity that moves along a rectifiable curve $\Gamma=\{(x(t),t)\,:\,t\in I\}$ for some interval $I\subset [0,\infty)$. Then $x'\equiv0$; in other words, the discontinuity is stationary.
\end{theorem}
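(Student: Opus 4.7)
The plan is a nonlocal Rankine--Hugoniot argument. Applying the integration-by-parts lemma of Appendix~\ref{app: integration by parts}, the weak formulation can be recast as
\[
\int_0^T\!\!\int_\R u\,\frac{\partial\phi}{\partial t}\,\diff x\,\diff t \;=\; \int_0^T\!\!\int_\R \phi(x,t)\,L[u](x,t)\,\diff x\,\diff t \qquad \forall\,\phi\in\mathcal{C}_c^\infty(\R\times(0,T)),
\]
where $L[u](x,t) := \int_0^\delta \frac{g(u(x,t),u(x+h,t)) - g(u(x-h,t),u(x,t))}{h}\,\omega_\delta(h)\,\diff h$. Verifying this identity is the central technical step, since neither $L[u]$ nor the manipulation is obviously well-defined when $\int_0^\delta \omega_\delta(h)/h\,\diff h = +\infty$. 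The piecewise $\mathcal{C}^1$ regularity of $u$ supplies the necessary cancellation: for each fixed $h\in(0,\delta)$ and a.e.\ $t$, splitting the $x$-integration at distance $h$ from $x(t)$, the numerator in $L[u]$ is $\mathcal{O}(h)$ outside the strip of width $\mathcal{O}(h)$ around $x(t)$ and only bounded inside it. This yields the uniform-in-$h$ bound
\[
\int_\R |\phi(x,t)|\,\frac{|g(u(x-h,t),u(x,t)) - g(u(x,t),u(x+h,t))|}{h}\,\diff x \;\le\; C_\phi,
\]
which is enough to apply Fubini's theorem and establish both the integrability of $\phi\,L[u]$ and the desired identity.

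With this identity at hand, I would compute the left-hand side by splitting the space-time plane into the two smooth components $\Omega_L := \{(x,t)\,:\,x<x(t)\}$ and $\Omega_R := \{(x,t)\,:\,x>x(t)\}$ and applying Green's theorem on each. The line integrals along $\Gamma$ assemble into a single jump contribution:
\[
\int_0^T\!\!\int_\R u\,\phi_t\,\diff x\,\diff t \;=\; -\iint_{\Omega_L\cup\Omega_R} u_t\,\phi\,\diff x\,\diff t \;+\; \int_I x'(t)\,\jump{u}(t)\,\phi(x(t),t)\,\diff t,
\]
where $\jump{u}(t) := u_R(x(t),t) - u_L(x(t),t)$ and $u_t$ denotes the classical time derivative on the smooth pieces. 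Substituting into the previous identity gives
\[
\iint_{\Omega_L\cup\Omega_R}\bigl(u_t + L[u]\bigr)\phi\,\diff x\,\diff t \;=\; \int_I x'(t)\,\jump{u}(t)\,\phi(x(t),t)\,\diff t \qquad \forall\,\phi.
\]
Restricting to $\phi$ with $\supp\phi\subset\Omega_L\cup\Omega_R$ localises this to the pointwise equation $u_t + L[u]=0$ on the smooth components, so the left-hand side of the display above vanishes for every admissible $\phi$.

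To conclude, I would specialise to $\phi(x,t) = \eta(x-x(t))\,\psi(t)$ with $\eta\in\mathcal{C}_c^\infty(\R)$ satisfying $\eta(0)=1$ and arbitrary $\psi\in\mathcal{C}_c^\infty(I)$, which reduces the identity to
\[
\int_I x'(t)\,\jump{u}(t)\,\psi(t)\,\diff t \;=\; 0.
\]
Hence $x'(t)\,\jump{u}(t) = 0$ for almost every $t\in I$, and since the discontinuity is genuine we have $\jump{u}(t)\neq 0$ a.e., forcing $x'\equiv 0$.

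The principal obstacle, in my view, is Step~1. Transferring the difference quotient from $\phi$ onto $g$ is immediate when $\omega_\delta(h)/h\in\Lone(0,\delta)$ (the growth condition~\eqref{growth condition on omega} used in \Cref{theorem 1}), but the present theorem is meant to hold without any such assumption on $\omega_\delta$. The $\mathcal{O}(h)$ cancellation furnished by the piecewise $\mathcal{C}^1$ regularity of $u$ is precisely what makes Fubini applicable unconditionally.
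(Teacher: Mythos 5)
Your argument is correct, but it takes a genuinely different route from the paper's. The paper never passes to a pointwise form of the equation: it uses a plateau test function $\phi=\psi(t)\varphi_{\eps,\tilde{\eps}}$ that is constant in $x$ near $\Gamma$, applies \Cref{lem: Integration by parts} only on the truncated domains $D_1^\eps,D_2^\eps$ kept at distance $\eps$ from the curve (where $u$ is $\mathcal{C}^1$), kills the volume term by sending $\tilde{\eps}\to0$ (the test function's support inside $D_i^\eps$ shrinks to nothing), and recovers $\int_J\psi\,(u_--u_+)x'\,\diff t=0$ as the surviving boundary contribution once the two nonlocal ``indicator-difference'' terms from $D_1^\eps$ and $D_2^\eps$ cancel in the limit $\eps\to0$. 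You instead first upgrade the weak formulation to the unconditional identity $\iint u\,\partial_t\phi=\iint\phi\,L[u]$ and then run the classical Rankine--Hugoniot computation on $\Omega_L\cup\Omega_R$. Your key technical point --- that piecewise $\mathcal{C}^1$ regularity gives the uniform-in-$h$ bound $\int_\R|\phi|\,|g(\tau_{-h}u,u)-g(u,\tau_h u)|/h\,\diff x\le C_\phi$ (the numerator is $\mathcal{O}(h)$ outside a strip of width $\mathcal{O}(h)$ about $x(t)$ and merely bounded on it), so that Tonelli/Fubini applies without the growth condition \eqref{growth condition on omega} --- is sound, and it buys you slightly more than the paper establishes: $\partial_t u+L[u]=0$ holds pointwise a.e.\ on the smooth components for \emph{any} admissible kernel, whereas \Cref{theorem 1} obtains pointwise validity only under \eqref{growth condition on omega}. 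The paper's detour through $D_i^\eps$ buys the opposite convenience: it never needs $L[u]\in\Loneloc$ up to $\Gamma$, at the price of a double limit and the bookkeeping of the nonlocal boundary terms. Two small repairs to your write-up: (i) $\phi(x,t)=\eta(x-x(t))\psi(t)$ is only as regular in $t$ as $x(\cdot)$, so it need not be an admissible test function; use $\phi(x,t)=\psi(t)\chi(x)$ with $\chi\in\mathcal{C}_c^\infty(\R)$ equal to $1$ on a neighbourhood of the compact set $x(\supp\psi)$, which still gives $\phi(x(t),t)=\psi(t)$; (ii) the Green's-theorem step and the conclusion $x'\jump{u}=0$ presuppose that $x(\cdot)$ is at least Lipschitz (or BV, with $x'$ its a.e.\ derivative), an assumption the paper's proof also makes tacitly when it writes $n_t^i\,\diff S$ and $x'(t)\,\diff t$.
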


\begin{proof}
Let $D$ be a neighborhood of the point $(x(t_0),t_0)\in\Gamma$ for some fixed $t_0>0$ such that  $\min_t\{ \max_{x,y}\dist( (x,t),(y,t)): (x,t),(y,t)\in D\}>2\delta$. Then $u$ is $\mathcal{C}^1$ inside $D$ except on $\Gamma\cap D$.
Let $D_1,D_2\coloneqq\{(x,t)\in D : x\lessgtr x(t)\}$, and for $\eps>0$ sufficiently small let
\begin{equation*}
	D_i^\eps\coloneqq\big\{ (x,t)\in D_i : \dist\big((x,t),\Gamma\big)>\eps \big\},
\end{equation*}
for $i=1,2$. For $\tilde{\eps}>0$, we define the test function $\phi(x,t) = \psi(t)\varphi_{\eps,\tilde{\eps}}(x,t)$ where $\psi\in\mathcal{C}^1_c(J)$ for $J\coloneqq \{t\in I:(x(t),t)\in\Gamma\cap D\}$, and where $\varphi_{\eps,\tilde{\eps}}\in\mathcal{C}^1(D)$ satisfies $0\leq\varphi_{\eps,\tilde\eps}\leq 1$ and
\begin{equation*}
	\varphi_{\eps,\tilde{\eps}}(x,t) = \begin{cases}
		1& \text{if }|x-x(t)|<\eps,\\
		0& \text{if }|x-x(t)|>\eps+\tilde{\eps}.
	\end{cases}
\end{equation*}
Note that by construction we have
$$\supp \phi = \big\{(x,t)\in D\ :\ t\in \supp \psi \text{ and } x\in(x(t)-(\eps+\tilde{\eps}),x(t)+\eps+\tilde{\eps})\big\}$$
and due to the definition of $D_i^\eps$, for $\eps$ and $\tilde{\eps}$ sufficiently small, we have in particular $\supp \phi \cap \big(D_1^\eps\setminus(\tau_{-h}D_1^\eps)\big) = \supp \phi \cap \big((\tau_{-h} D_2^\eps)\setminus D_2^\eps\big) = \emptyset$.

Since $u$ is a weak solution, we have
\begin{align}
 \begin{split}
 	0 &= \iint_D \left( u\frac{\partial \phi}{\partial t} + \int_0^\delta \frac{\tau_h\phi-\phi}{h} g(u,\tau_h u)\omega_\delta(h)\diff h \right)\diff x\diff t\\
 	&= \lim_{\eps\to 0} \iint_{D_1^\eps\cup D_2^\eps} \left( u\frac{\partial \phi}{\partial t} + \int_0^\delta \frac{\tau_h\phi-\phi}{h} g(u,\tau_h u)\omega_\delta(h)\diff h \right)\diff x\diff t.
 \end{split}
 \label{Rankine-Hugoniot: weak formulation in D}
 \end{align}
 Using \Cref{lem: Integration by parts} to integrate by parts we find
 \begin{align}
 \begin{split}
 	\iint_{D_1^\eps} \biggl(u\frac{\partial\phi}{\partial t} & + \int_0^\delta g(u,\tau_hu)\frac{\tau_h\phi-\phi}{h}\omega_\delta(h)\diff h\biggr) \diff x \diff t \\
    &= -\int_J\int_{x(t)-(\eps+\tilde{\eps})}^{x(t)-\eps}\phi\biggl(\frac{\partial u}{\partial t} + \int_0^\delta \frac{g(u,\tau_h u) - g(\tau_{-h}u,u)}{h}\omega_\delta(h)\diff h\biggr) \diff x \diff t\\
    &\mathrel{\hphantom{=}} +\iint_{\partial D_1^\eps}u\phi n_t^1\diff S + \int_0^\delta\int_J\int_{x(t)-(\eps+\tilde{\eps})}^{x(t)+\eps+\tilde{\eps}}\phi g(\tau_{-h}u,u)\frac{\tau_{-h}\ind_{D_1^\eps}-\ind_{D_1^\eps}}{h}\omega_\delta(h)\diff x\diff t\diff h
\label{eq: D 1 eps}
\end{split}
\end{align}
where $n_t^1$ is the $t$-component of the outward pointing normal to $\partial D_1^\eps$.
We have
\begin{equation*}
	\frac{\tau_{-h}\ind_{D_1^\eps}-\ind_{D_1^\eps}}{h} = \begin{cases}
		-\frac{1}{h}& \text{if }(x,t)\in D_1^\eps\setminus(\tau_{-h}D_1^\eps),\\
		\frac{1}{h}& \text{if }(x,t) \in (\tau_{-h}D_1^\eps)\setminus D_1^\eps, \\
		0 & \text{else.}
	\end{cases}
\end{equation*}
Since $\supp \phi\cap \big(D_1^\eps\setminus(\tau_{-h}D_1^\eps)\big)=\emptyset$ we have
\begin{multline*}
	\int_0^\delta\int_J\int_{x(t)-(\eps+\tilde{\eps})}^{x(t)+\eps+\tilde{\eps}}\phi g(\tau_{-h}u,u)\frac{\tau_{-h}\ind_{D_1^\eps}-\ind_{D_1^\eps}}{h}\omega_\delta(h)\diff x\diff t\diff h\\
	= \int_0^\delta\int_J\int_{x(t)-\eps}^{x(t)-\eps+h} g(\tau_{-h}u,u) \frac{\omega_\delta(h)}{h}\diff x\diff t\diff h
\end{multline*}
which is independent of $\tilde{\eps}$. Thus, sending $\tilde{\eps}\to 0$ in~\eqref{eq: D 1 eps} and using the dominated convergence theorem, we obtain
\begin{align*}
 	&\iint_{D_1^\eps} \biggl(u\frac{\partial\phi}{\partial t} + \int_0^\delta g(u,\tau_hu)\frac{\tau_h\phi-\phi}{h}\omega_\delta(h)\diff h\biggr) \diff x \diff t \\
    &= \int_{\partial D_1^\eps} u\psi(t) n_t^1\diff S + \int_0^\delta\int_J\int_{x(t)-\eps}^{x(t)-\eps+h}g(\tau_{-h}u,u)\frac{\omega_\delta(h)}{h}\diff x\diff t\diff h.
\end{align*}
Similarly, we find
\begin{align*}
 	&\iint_{D_2^\eps} \biggl(u\frac{\partial\phi}{\partial t} + \int_0^\delta g(u,\tau_hu)\frac{\tau_h\phi-\phi}{h}\omega_\delta(h)\diff h\biggr) \diff x \diff t \\
    &= \int_{\partial D_2^\eps} u\psi(t) n_t^2\diff S - \int_0^\delta\int_J\int_{x(t)+\eps}^{x(t)+\eps+h}g(\tau_{-h}u,u)\frac{\omega_\delta(h)}{h}\diff x\diff t\diff h.
\end{align*}
where $n_t^2$ is the $t$-component of the outward pointing normal to $\partial D_2^\eps$.
Going back to~\eqref{Rankine-Hugoniot: weak formulation in D} and using $n_t^1 = -n_t^2$ we obtain
\begin{align*}
	0 &= \lim_{\eps\to 0} \iint_{D_1^\eps\cup D_2^\eps} \left( u\frac{\partial \phi}{\partial t} + \int_0^\delta \frac{\tau_h\phi-\phi}{h} g(u,\tau_h u)\omega_\delta(h)\diff h \right)\diff x\diff t\\
	&= \lim_{\eps\to 0} \left( \int_{\partial D_1^\eps} u\psi(t)n_t^1\diff S - \int_{\partial D_2^\eps} u\psi(t)n_t^1\diff S \right)\\
	&\mathrel{\hphantom{=}}+ \lim_{\eps\to 0} \int_0^\delta\bigg(\int_J\int_{x(t)-\eps}^{x(t)-\eps+h} g(\tau_{-h}u,u)\frac{\omega_\delta(h)}{h}\diff x\diff t\diff h\\
	&\mathrel{\hphantom{=+ \lim_{\eps\to 0} \int_0^\delta\bigg(}}- \int_J\int_{x(t)+\eps}^{x(t)+\eps+h} g(\tau_{-h}u,u)\frac{\omega_\delta(h)}{h}\diff x\diff t\bigg)\diff h\\
	&= -\int_J \psi(t) (u_--u_+)x'(t)\diff t
\end{align*}
where $u_\pm(t)\coloneqq \lim_{x\to x(t)\pm0}u(x,t)$ are the right and left limits of $u(\cdot,t)$ as $x\to x(t)$, respectively. Since we chose the test function $\psi$ arbitrarily and $u_-\neq u_+$ (as $u$ is discontinuous), we conclude that $x'(t)=0$ for all $t$. Thus, the discontinuity is stationary.
\end{proof}

\subsection{Regularity of traveling wave solutions}
For traveling wave solutions we can prove the following stronger result.
\begin{theorem}
	Assume that the nonlocal interaction kernel $\omega_\delta$ satisfies $\int_0^\delta \frac{\omega_\delta(h)}{h}\diff h<\infty$. Then every traveling wave solution, i.e., any weak solution of the form $u(x,t)=v(x-ct)$ for some $c\in\R$, is either stationary or lies in $\mathcal{C}^{k+1,1}$, where $k\geq 0$ is the largest integer such that $g\in\mathcal{C}^{k,1}$.
\end{theorem}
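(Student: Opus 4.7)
\emph{Overall plan.} I would approach this in three stages: first reduce to the case where $v$ is Lipschitz using \Cref{theorem 1}; then recast the nonlocal conservation law as a pointwise integral identity $c v' = G[v]$ for the traveling profile; and finally bootstrap regularity by iterated differentiation. For the first stage, note that if $c=0$ the wave is stationary and there is nothing to prove, so assume $c\neq 0$. By \Cref{theorem 1}, $u(x,t)=v(x-ct)$ is an entropy solution, and the estimate $\int_0^T\!\int_\R u\,\partial_t\phi\,\diff x\,\diff t\leq C\|\phi\|_{\Lone((0,T)\times\R)}$ derived in the proof of \Cref{theorem 1} forces, via \Cref{prop:lipschitz}, $\partial_t u\in\Linfty(\R\times(0,T))$; reading $\partial_t u = -c\,v'(x-ct)$ distributionally then gives $v'\in\Linfty(\R)$, so $v$ is Lipschitz.

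\emph{Base case ($k=0$).} Since both $v$ and $g$ are Lipschitz, the integrand of $G(\xi):=\int_0^\delta F(\xi,h)\omega_\delta(h)/h\,\diff h$, where $F(\xi,h):=g(v(\xi),v(\xi+h)) - g(v(\xi-h),v(\xi))$, is dominated by $2\mathrm{Lip}(g)\mathrm{Lip}(v)\,\omega_\delta(h)$, so the conservation law reduces to the pointwise identity $c v'(\xi) = G(\xi)$. Two bounds drive the key estimate: the pointwise bound $|F(\xi,h)|\leq 2\mathrm{Lip}(g)\mathrm{Lip}(v)\,h$ (expand $F$ about the diagonal and apply Lipschitz of $g$ and $v$) and the spatial Lipschitz bound $|F(\xi,h)-F(\eta,h)|\leq 4\mathrm{Lip}(g)\mathrm{Lip}(v)\,|\xi-\eta|$. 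Taking the minimum of these two and splitting the $h$-integral at $h=|\xi-\eta|$ yields $|G(\xi)-G(\eta)| \leq C|\xi-\eta|\int_0^\delta \omega_\delta(h)/h\,\diff h$, so $G$ is Lipschitz and $v\in\mathcal{C}^{1,1}$.

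\emph{Inductive step.} For $k\geq 1$, assume $v\in\mathcal{C}^{j,1}$ for some $1\leq j\leq k$. Since $g\in\mathcal{C}^{j,1}$, the chain rule permits differentiating $G$ exactly $j$ times, producing a finite sum of integrals whose numerators are products of partial derivatives of $g$ up to order $j$ evaluated at $(v(\xi),v(\xi\pm h))$ with polynomial expressions in $v'(\xi\pm h),\ldots,v^{(j)}(\xi\pm h)$. I would need two inputs: (a) by the consistency $g(a,a)=f(a)$, each such numerator still vanishes at $h=0$, so the factor $\omega_\delta(h)/h$ remains integrable against it; (b) each is Lipschitz in $\xi$ uniformly in $h$, from the $\mathcal{C}^{j,1}$ regularity of $v$ and of $g$. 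The splitting argument of the base case then applies verbatim and gives $G^{(j)}\in\mathcal{C}^{0,1}$, hence $v\in\mathcal{C}^{j+1,1}$; iterating from $j=1$ up to $j=k$ yields $v\in\mathcal{C}^{k+1,1}$.

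\emph{Anticipated obstacle.} The central technical difficulty will be ingredient (a): after $j$ differentiations in $\xi$, one must verify that every term in the numerator of the integrand of $G^{(j)}$ still vanishes to first order as $h\to 0$, since otherwise $\omega_\delta(h)/h$ may fail to be integrable against it. Establishing this requires careful bookkeeping of the chain-rule expansion and exploits a diagonal-cancellation structure between the two halves $g(v(\xi),v(\xi+h))$ and $g(v(\xi-h),v(\xi))$. Once that cancellation is made explicit, the Lipschitz-in-$\xi$ bound in (b) and the subsequent minimum-plus-splitting trick follow routinely along the lines of the base case.
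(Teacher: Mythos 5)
Your proposal is correct and follows essentially the same route as the paper: use the machinery from the proof of Theorem~2.1 to reduce to the pointwise identity $cv'(\xi)=\int_0^\delta \frac{g(v,\tau_h v)-g(\tau_{-h}v,v)}{h}\omega_\delta(h)\,\diff h$, conclude $v'\in\Linfty(\R)$ when $c\neq 0$, and then bootstrap regularity through this identity. One remark, though: the ``central technical difficulty'' you anticipate in the inductive step is not actually there. Under the standing hypothesis $\int_0^\delta\frac{\omega_\delta(h)}{h}\,\diff h<\infty$ the weight $\frac{\omega_\delta(h)}{h}$ is itself integrable, so the differentiated numerator only needs to be bounded, and Lipschitz in $\xi$ uniformly in $h$, for $G^{(j)}$ to be well defined and Lipschitz; no vanishing at $h=0$ and no diagonal cancellation are required. (The cancellation you describe does hold --- $\partial_\xi^j\bigl(g(v(\xi),v(\xi+h))-g(v(\xi-h),v(\xi))\bigr)\to 0$ as $h\to 0$ by consistency --- but it plays no role here; it would only matter under the weaker assumption that merely $\omega_\delta$ is integrable.) For the same reason the minimum-plus-splitting trick in your base case is superfluous: the uniform bound $|F(\xi,h)-F(\eta,h)|\leq 4\,\mathrm{Lip}(g)\,\mathrm{Lip}(v)\,|\xi-\eta|$ integrated against $\frac{\omega_\delta(h)}{h}$ already yields the Lipschitz bound on $G$, which is why the paper disposes of the bootstrap in a single sentence.
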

\begin{proof}
From the proof of \Cref{theorem 1} we recall that if $\int_0^\delta \frac{\omega_\delta(h)}{h}\diff h <\infty$, then any weak solution satisfies \eqref{nonlocal model} pointwise almost everywhere. In the case of a traveling wave solution this means
\begin{equation}
	c v'(\xi) = \int_0^\delta \frac{g(v,\tau_h v) - g(\tau_{-h}v,v)}{h} \omega_\delta(h)\diff h
	\label{eq: travelling wave}
\end{equation}
where we have set $\xi=x-ct$.
If $c=0$ then $u$ is stationary. On the other hand, if $c\neq 0$ we can use equation \eqref{eq: travelling wave} together with the Lipschitz continuity of $g$ and $\int_0^\delta \frac{\omega_\delta(h)}{h}\diff h<\infty$ to find that
\begin{equation*}
	|v'(\xi)| \leq \frac{1}{|c|}\int_0^\delta |g(v,\tau_h v) - g(\tau_{-h}v,v)| \frac{\omega_\delta(h)}{h}\diff h \leq C\|v\|_\infty \int_0^\delta \frac{\omega_\delta(h)}{h}\diff h \leq C
\end{equation*}
which implies that $v'\in\Linfty(\R)$. Thus, $v$ is Lipschitz continuous. Again going back to \eqref{eq: travelling wave}, the right-hand side, and hence also $v'$, must be Lipschitz continuous, and therefore $v\in\mathcal{C}^{1,1}(\R)$. Continuing this bootstrap argument, we see that if $c\neq 0$ then $v\in\mathcal{C}^{k+1,1}$ whenever $g\in\mathcal{C}^{k,1}$.
\end{proof}
\begin{rem}
	In \cite{du2016analysis} the authors considered a parabolic equation with nonlocality in time and reported spatial smoothing properties. Similarly, in our setting we expected to gain regularity in time as a consequence of the nonlocality in space.
\end{rem}

\section{A second-order scheme for the nonlocal model}\label{sec: A second-order scheme}
We now propose a second-order scheme for the nonlocal model \eqref{nonlocal model}. Let $\Dx$ and $\Dt$ denote the spatial and temporal grid size and denote the spatial and temporal cells by $\cell_j = (x_\jmhf,x_\jphf)$ and $\timecell^n = (t^n,t^{n+1})$ and the grid points by $x_j= j\Dx$ and $t^n = n\Dt$.
Here, we assumed a uniform grid for simplicity, but note that the scheme can be generalized to non-uniform grids.
Let further $u_j^n$ denote the numerical solution at the point $(x_j,t^n)$. In the following, we will denote $N=T/\Dt$ and $\lambda=\Dt/\Dx$.

\subsection{Discretization of the integral term}
We will first approximate the integral term in \eqref{nonlocal model}. In \cite{du2017nonlocal} a discretization based on the endpoint rule was proposed and the numerical examples presented in \cite{du2017numerical} underline its first-order accuracy. In order to design a second-order accurate method we will employ an approximation based on the trapezoidal rule.
Let $r = \lfloor \frac{\delta}{\Dx}\rfloor$ and denote $\I_k = ((k-1)\Dx,k\Dx)$, $k\in\N$.
Following \cite{tian2013analysis}, for functions $G$ with bounded second derivative on $[0,\delta]$ we have
\begin{equation}
	\int_0^{\delta}G(h)\omega_{\delta}(h)\diff h = \sum_{k=1}^{r+ 1} \int_{\I_k} \widehat{G}(h)\omega_\delta(h) \diff h + \mathcal{O}(\Dx^2),\label{eqn: integral approximation}
\end{equation}
where
\begin{equation*}
	\widehat{G}(h) = \sum_{i=0}^{r+ 1} G(i\Dx)\Phi_i(h)
\end{equation*}
is a piecewise linear approximation to $G$ utilizing the standard continuous piecewise linear hat functions
\begin{equation*}
	\Phi_i (h) = \begin{cases}
		\frac{h-(i-1)\Dx}{\Dx} & \text{if }h\in \I_i,\\
		\frac{(i+1)\Dx-h}{\Dx} & \text{if }h\in \I_{i+1},\\
		0 & \text{otherwise.}
	\end{cases}
\end{equation*}
Taking into account the support of $\Phi_i$ we can calculate
\begin{align*}
		\sum_{k=1}^{r+1} \int_{\mathcal{I}_k} \widehat{G}(h) \omega_\delta(h) \diff h
		&= \sum_{k=1}^{r+1} \int_{\mathcal{I}_k} \sum_{i=0}^{r+1} G(i\Dx)\Phi_i(h) \omega_\delta(h) \diff h\\
		&= G(0)\int_{\mathcal{I}_1}\Phi_0(h)\omega_\delta(h)\diff h +\sum_{k=1}^{r+1} G(k\Dx) \int_{\mathcal{I}_k\cup \mathcal{I}_{k+1}} \Phi_k(h) \omega_\delta(h) \diff h
	\end{align*}
In order to approximate the integrand in \eqref{nonlocal model} we set
\begin{equation*}
	G(h) = \frac{g(u(x_j), u(x_j+h))-g(u(x_j-h),u(x_j))}{h}.
\end{equation*}
We will approximate $G(0) \coloneq \lim_{h\to 0} G(h)$ by
\begin{equation*}
	G(0) \approx \frac{g(u_j^+,u_{j+1}^-) - g(u_{j-1}^+,u_j^-)}{\Dx},
\end{equation*}
where
\begin{equation*}
	u_j^+ = u_j + \frac{1}{2}\sigma_j \qquad \text{and} \qquad u_j^- = u_j - \frac{1}{2}\sigma_j
\end{equation*}
are the values of a piecewise linear reconstruction at the cell interfaces (see e.g. \cite{godlewski1991hyperbolic,leveque2002finite}). Here, $\sigma_j$ is an approximation to the slope in the interval $\cell_j$. In the following, we will assume that the approximate slopes $\sigma_j$ satisfy
\begin{equation}
	-2\leq \frac{\sigma_{j+1} - \sigma_j}{u_{j+1}-u_j} \leq 2 \qquad\text{for all }j,
	\label{Assumption on the slopes}
\end{equation}
which is the well-known TVD region for slope-limiter methods, see \cite{sweby1984high}.
We obtain the following approximation of the integral in \eqref{nonlocal model}
at the points $x=x_j$:
\begin{equation*}
	 \frac{g(u_j^+,u_{j+1}^-) - g(u_{j-1}^+,u_j^-)}{\Dx} W_0 + \sum_{k=1}^{r+1} \frac{g(u_j,u_{j+k}) - g(u_{j-k},u_j)}{k\Dx} W_k,
\end{equation*}
where
\begin{equation}
\begin{aligned}
	W_0 &= \int_{\I_1} \Phi_0(h)\omega_\delta(h)\diff h &&\text{and}\\
	W_k &= \int_{\I_k\cup \I_{k+1}} \Phi_k(h)\omega_\delta(h)\diff h, &&\text{for }k=1,\ldots,r+1.
\end{aligned}
\label{second-order weights}
\end{equation}
See \Cref{fig: weights} for an illustration of the weights.
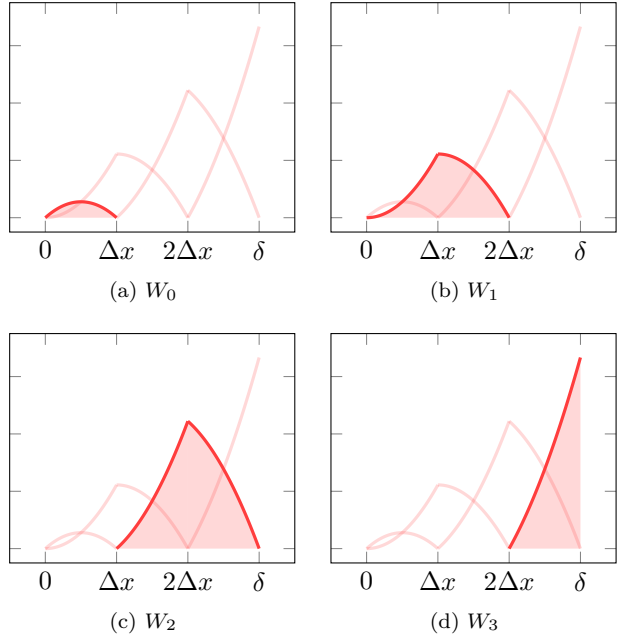
\begin{figure}[t]
	\centering
	\subfloat[$W_0$]{
	\begin{tikzpicture}
		\begin{axis}[xmin=-0.5,xmax=3.5,xtick={0,1,2,3},xticklabels={$0$,$\Dx$,$2\Dx$,$\delta$},yticklabels=\empty,ymin=-0.05,ymax=0.75]
			\addplot+[name path=f,myred, very thick, domain=0:1, mark=none, samples=50] {(1-x)*2*x/(3^2)};

			\addplot+[myred, opacity=0.2, very thick, domain=0:1, mark=none, samples=50] {(x-0)*2*x/(3^2)};
			\addplot+[myred, opacity=0.2, very thick, domain=1:2, mark=none, samples=50] {(2-x)*2*x/(3^2)};

			\addplot+[myred, opacity=0.2, very thick, domain=1:2, mark=none, samples=50] {(x-1)*2*x/(3^2)};
			\addplot+[myred, opacity=0.2, very thick, domain=2:3, mark=none, samples=50] {(3-x)*2*x/(3^2)};

			\addplot[myred, opacity=0.2, very thick, domain=2:3, mark=none, samples=50] {(x-2)*2*x/(3^2)};

			\path[name path=axis] (axis cs:0,0) -- (axis cs:1,0);

   			\addplot [
   			    thick,
   			    color=myred,
   			    fill=myred,
   			    fill opacity=0.2
   			]
   			fill between[
   			    of=f and axis,
   			    soft clip={domain=0:1},
   			];
		\end{axis}
	\end{tikzpicture}
	}
	\subfloat[$W_1$]{
	\begin{tikzpicture}
		\begin{axis}[xmin=-0.5,xmax=3.5,xtick={0,1,2,3},xticklabels={$0$,$\Dx$,$2\Dx$,$\delta$},yticklabels=\empty,ymin=-0.05,ymax=0.75]
			\addplot+[myred, opacity=0.2, very thick, domain=0:1, mark=none, samples=50] {(1-x)*2*x/(3^2)};

			\addplot+[name path=f,myred, very thick, domain=0:1, mark=none, samples=50] {(x-0)*2*x/(3^2)};
			\addplot+[name path=g,myred, very thick, domain=1:2, mark=none, samples=50] {(2-x)*2*x/(3^2)};

			\addplot+[myred, opacity=0.2, very thick, domain=1:2, mark=none, samples=50] {(x-1)*2*x/(3^2)};
			\addplot+[myred, opacity=0.2, very thick, domain=2:3, mark=none, samples=50] {(3-x)*2*x/(3^2)};

			\addplot[myred, opacity=0.2, very thick, domain=2:3, mark=none, samples=50] {(x-2)*2*x/(3^2)};
			\path[name path=axis] (axis cs:0,0) -- (axis cs:1,0);

   			\addplot [
   			    thick,
   			    color=myred,
   			    fill=myred,
   			    fill opacity=0.2
   			]
   			fill between[
   			    of=f and axis,
   			    soft clip={domain=0:1},
   			];
   			\addplot [
   			    thick,
   			    color=myred,
   			    fill=myred,
   			    fill opacity=0.2
   			]
   			fill between[
   			    of=g and axis,
   			    soft clip={domain=1:2},
   			];
		\end{axis}
	\end{tikzpicture}
	}\\
	\subfloat[$W_2$]{
	\begin{tikzpicture}
		\begin{axis}[xmin=-0.5,xmax=3.5,xtick={0,1,2,3},xticklabels={$0$,$\Dx$,$2\Dx$,$\delta$},yticklabels=\empty,ymin=-0.05,ymax=0.75]
			\addplot+[myred, opacity=0.2, very thick, domain=0:1, mark=none, samples=50] {(1-x)*2*x/(3^2)};

			\addplot+[myred, opacity=0.2, very thick, domain=0:1, mark=none, samples=50] {(x-0)*2*x/(3^2)};
			\addplot+[myred, opacity=0.2, very thick, domain=1:2, mark=none, samples=50] {(2-x)*2*x/(3^2)};

			\addplot+[name path=f,myred, very thick, domain=1:2, mark=none, samples=50] {(x-1)*2*x/(3^2)};
			\addplot+[name path=g,myred, very thick, domain=2:3, mark=none, samples=50] {(3-x)*2*x/(3^2)};

			\addplot[myred, opacity=0.2, very thick, domain=2:3, mark=none, samples=50] {(x-2)*2*x/(3^2)};
			\path[name path=axis] (axis cs:0,0) -- (axis cs:3,0);

   			\addplot [
   			    thick,
   			    color=myred,
   			    fill=myred,
   			    fill opacity=0.2
   			]
   			fill between[
   			    of=f and axis,
   			    soft clip={domain=1:2},
   			];
   			\addplot [
   			    thick,
   			    color=myred,
   			    fill=myred,
   			    fill opacity=0.2
   			]
   			fill between[
   			    of=g and axis,
   			    soft clip={domain=2:3},
   			];
		\end{axis}
	\end{tikzpicture}
	}
	\subfloat[$W_3$]{
	\begin{tikzpicture}
		\begin{axis}[xmin=-0.5,xmax=3.5,xtick={0,1,2,3},xticklabels={$0$,$\Dx$,$2\Dx$,$\delta$},yticklabels=\empty,ymin=-0.05,ymax=0.75]
			\addplot+[myred, opacity=0.2, very thick, domain=0:1, mark=none, samples=50] {(1-x)*2*x/(3^2)};

			\addplot+[myred, opacity=0.2, very thick, domain=0:1, mark=none, samples=50] {(x-0)*2*x/(3^2)};
			\addplot+[myred, opacity=0.2, very thick, domain=1:2, mark=none, samples=50] {(2-x)*2*x/(3^2)};

			\addplot+[myred, opacity=0.2, very thick, domain=1:2, mark=none, samples=50] {(x-1)*2*x/(3^2)};
			\addplot+[myred, opacity=0.2, very thick, domain=2:3, mark=none, samples=50] {(3-x)*2*x/(3^2)};

			\addplot[name path=f,myred, very thick, domain=2:3, mark=none, samples=50] {(x-2)*2*x/(3^2)};
			\path[name path=axis] (axis cs:0,0) -- (axis cs:3,0);

   			\addplot [
   			    thick,
   			    color=myred,
   			    fill=myred,
   			    fill opacity=0.2
   			]
   			fill between[
   			    of=f and axis,
   			    soft clip={domain=2:3},
   			];
		\end{axis}
	\end{tikzpicture}
	}
	\caption{Weights for $\delta=3\Dx$ and $\omega_\delta(h)=\frac{2}{\delta^2}h$. The weights correspond to the highlighted areas.}\label{fig: weights}
\end{figure}

\subsection{The numerical scheme}
In order to keep the presentation brief we will introduce the following notation:
\begin{align*}
	g_\jphf &\coloneqq g(u_j^+,u_{j+1}^-), \qquad j\in\Z,\\
	g_{j,j+k} &\coloneqq g(u_j,u_{j+k}), \qquad j\in\Z, k\in\N.
\end{align*}
Then, defining
\begin{equation*}
	\Approx(u)_j \coloneqq \frac{g_\jphf - g_\jmhf}{\Dx} W_0 + \sum_{k=1}^{r+1} \frac{g_{j,j+k} - g_{j-k,j}}{k\Dx} W_k,
\end{equation*}
where $W_0,\ldots,W_{r+1}$ are given by \eqref{second-order weights}
the semi-discrete numerical scheme reads
\begin{equation}
\left\{
\begin{aligned}
	&\frac{\diff u_j}{\diff t} + \Approx(u)_j = 0\\
	&u_j(0) = \frac{1}{\Dx} \int_{\cell_j}u_0(x)\diff x
\end{aligned}
\right. \qquad \text{for }j\in\Z.
\label{semi-discrete num scheme}
\end{equation}
The following proposition shows that by keeping the spatial grid size $\Dx$ fixed and letting $\delta\to 0$, the first equation in \eqref{semi-discrete num scheme} reduces to the second-order scheme \eqref{second-order local numerical scheme} for the local conservation law.
\begin{proposition}\label{prop: vanishing delta}
Let $\Dx>0$ be fixed. Then the nonlocal semi-discrete scheme \eqref{semi-discrete num scheme} converges to the local semi-discrete scheme \eqref{second-order local numerical scheme} as $\delta\to0$.
\end{proposition}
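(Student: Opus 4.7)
The plan is to exploit that the sum in $\Approx(u)_j$ has a number of terms controlled by $r=\lfloor\delta/\Dx\rfloor$, which vanishes as $\delta$ becomes smaller than the fixed grid size $\Dx$. Specifically, once $\delta<\Dx$ we have $r=0$, so the defining formula collapses to
\begin{equation*}
    \Approx(u)_j = \frac{g_\jphf - g_\jmhf}{\Dx}\,W_0 + \frac{g_{j,j+1}-g_{j-1,j}}{\Dx}\,W_1,
\end{equation*}
and the result reduces to showing $W_0\to 1$ and $W_1\to 0$ as $\delta\to 0$.

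First I would insert the explicit form of the hat functions into~\eqref{second-order weights}. For $\delta<\Dx$ the support of $\omega_\delta$ lies inside $\I_1$, so
\begin{equation*}
    W_0 = \int_0^\delta \frac{\Dx-h}{\Dx}\omega_\delta(h)\diff h = 1 - \frac{1}{\Dx}\int_0^\delta h\,\omega_\delta(h)\diff h,
    \qquad
    W_1 = \int_0^\delta \frac{h}{\Dx}\omega_\delta(h)\diff h.
\end{equation*}
Using $h\le\delta$ on $\supp\omega_\delta$ together with $\int_0^\delta\omega_\delta\diff h=1$ gives the elementary bound $0\le \int_0^\delta h\,\omega_\delta(h)\diff h\le \delta$, hence $W_0=1+\mathcal{O}(\delta/\Dx)$ and $0\le W_1\le \delta/\Dx$. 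Both tend to the claimed limits as $\delta\to 0$ with $\Dx$ fixed.

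It remains to verify that the second term in the displayed formula for $\Approx(u)_j$ actually vanishes in the limit, not just $W_1$ itself. Here one uses boundedness of the numerical solution (so $g_{j,j+1}$ and $g_{j-1,j}$ stay bounded by Lipschitz continuity of $g$), which implies $|g_{j,j+1}-g_{j-1,j}|/\Dx$ is bounded for fixed $\Dx$, so multiplying by $W_1\to 0$ kills the whole term. Combining these facts yields $\Approx(u)_j\to (g_\jphf-g_\jmhf)/\Dx$, which is exactly the spatial operator in~\eqref{second-order local numerical scheme}.

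There is no genuine obstacle here; the only minor subtlety is being precise about what ``convergence'' means in the proposition statement. I would interpret it as pointwise convergence of the right-hand side of~\eqref{semi-discrete num scheme} for each fixed $j$ and each fixed state $u=(u_j)_{j\in\Z}\in \ell^\infty$, which is what the estimates above deliver. If a quantitative rate is desired, the bounds above give $\Approx(u)_j = (g_\jphf-g_\jmhf)/\Dx + \mathcal{O}(\delta/\Dx^2)$ with an implicit constant depending on $\|u\|_\infty$ and the Lipschitz constant of $g$.
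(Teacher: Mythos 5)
Your proposal is correct and follows essentially the same route as the paper: reduce to $r=0$ for $\delta<\Dx$, then show $W_0\to1$ and $W_1\to0$ using $\supp\omega_\delta\subseteq[0,\delta]$ and $\int_0^\delta\omega_\delta=1$. Your explicit computation of $W_0$ and $W_1$ via the first moment of $\omega_\delta$, and the remark that the bounded flux differences make the $W_1$ term itself vanish, are minor refinements of the paper's pointwise bounds on $\Phi_0$, not a different argument.
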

\begin{proof}
	If $\delta<\Dx$ then $r=0$, so we only need to consider the weights $W_0$ and $W_1$ given by \eqref{second-order weights}. If $\delta<\Dx$ and $h\in(0,\delta)$ then
	\begin{align*}
		\Phi_0(h)&=\frac{\Dx-h}{\Dx}\leq 1 &&\text{and} & \Phi_0(h) &=\frac{\Dx-h}{\Dx}\geq\frac{\Dx-\delta}{\Dx}=1-\frac{\delta}{\Dx}.
	\end{align*}
	Since $\omega_\delta$ is nonnegative and integrates to $1$, we can bound
	\begin{equation*}
		 W_0 = \int_0^\delta \Phi_0(h) \omega_\delta(h)\diff h \leq 1\qquad\text{and}\qquad W_0 \geq 1-\frac{\delta}{\Dx}.
	\end{equation*}
	Similarly, we find $0\leq W_1\leq\frac{\delta}{\Dx}$. Thus, taking $\delta\to 0$, we get $W_0=1$ and $W_1=0$ which means that the nonlocal scheme \eqref{semi-discrete num scheme} converges to the local scheme \eqref{second-order local numerical scheme}.
\end{proof}

For simplicity, we will confine the analysis in the following sections to the forward Euler discretization
\begin{equation}
	u_j^{n+1} = u_j^n - \Dt \Approx(u^n)_j
	\label{FE discretization}
\end{equation}
of \eqref{semi-discrete num scheme}. In order to numerically preserve the second-order accuracy, for the numerical experiments in \Cref{sec: Numerical experiments} we will instead use the strong stability preserving Runge--Kutta discretization given by
\begin{equation}
\begin{split}
	u_j^* &= u_j^n - \Dt\Approx(u^n)_j,\\
	u_j^{**} &= u_j^* - \Dt\Approx(u^*)_j,\\
	u_j^{n+1} &= \frac{1}{2}(u_j^n + u_j^{**}),
\end{split}
\label{num scheme}
\end{equation}
see \cite{gottlieb2001strong} for details.

\subsection{Properties of the numerical scheme}

In this section we will prove essential properties of the numerical scheme \eqref{FE discretization}. First, we present two lemmata which are nonlocal modifications of Harten's lemma, see \cite{harten1983high}.

\begin{lemma}\label{Harten's lemma L infty bound}
	Let $W_0,\dots,W_{r+1}$ be nonnegative numbers such that $\sum_{k=0}^{r+1}W_k =1$ and assume that there are numbers $A_\jphf^n, B_\jphf^n, C_{j,j+k}^n,D_{j-k,j}^n\in\R$ (for each $j\in\Z,\ n\in\N_0,\ k=0,\dots,r+1$) satisfying
	\begin{equation}
		A_\jphf^n, B_\jphf^n, C_{j,j+k}^n,D_{j-k,j}^n \geq 0,\qquad (A_\jphf^n + B_\jmhf^n) W_0 + \sum_{k=1}^{r+1}(C_{j,j+k}^n + D_{j-k,j}^n)\frac{1}{k} W_k \leq 1 \label{Assumptions L infty bound}
	\end{equation}
	for all $n,j$. Then solutions computed with  the scheme
	\begin{equation}
		\begin{split}
			u_j^{n+1} &= u_j^n + \left( A_\jphf^n (u_{j+1}^n - u_j^n) - B_\jmhf^n (u_j^n - u_{j-1}^n) \right) W_0\\
			&\mathrel{\phantom{=}}+ \sum_{k=1}^{r+1} \left( C_{j,j+k}^n (u_{j+k}^n - u_j^n)  - D_{j-k,j}^n (u_j^n - u_{j-k}^n) \right) \frac{1}{k} W_k
		\end{split}
		\label{second-order method incremental form I}
	\end{equation}
 enjoy the discrete maximum principle
	\begin{equation*}
		\inf_i u_i^n \leq u_j^{n+1} \leq \sup_i u_i^n\qquad \text{for all }n,j.
	\end{equation*}
\end{lemma}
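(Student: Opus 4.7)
The plan is to rewrite the scheme in \eqref{second-order method incremental form I} as a convex combination of the values $u_i^n$ at neighbouring grid points and then conclude the discrete maximum principle as a trivial consequence. This is exactly the strategy used for the original local Harten's lemma; here the nonlocal sum introduces extra terms but does not change the fundamental structure.

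First I would expand every difference in \eqref{second-order method incremental form I} to group the contributions by the grid value they multiply. This gives
\begin{align*}
u_j^{n+1} &= \left(1 - (A_\jphf^n + B_\jmhf^n)W_0 - \sum_{k=1}^{r+1}(C_{j,j+k}^n + D_{j-k,j}^n)\frac{W_k}{k}\right) u_j^n \\
&\mathrel{\phantom{=}}+ A_\jphf^n W_0\, u_{j+1}^n + B_\jmhf^n W_0\, u_{j-1}^n \\
&\mathrel{\phantom{=}}+ \sum_{k=1}^{r+1} C_{j,j+k}^n \frac{W_k}{k}\, u_{j+k}^n + \sum_{k=1}^{r+1} D_{j-k,j}^n \frac{W_k}{k}\, u_{j-k}^n.
\end{align*}

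Next I would verify two things about this expansion: nonnegativity and partition-of-unity of the coefficients. Nonnegativity of the coefficients of $u_{j\pm k}^n$ follows immediately from $W_k\geq 0$ together with the sign assumptions on $A,B,C,D$ in \eqref{Assumptions L infty bound}, while nonnegativity of the coefficient of $u_j^n$ is precisely the inequality stated in \eqref{Assumptions L infty bound}. The fact that the coefficients sum to exactly $1$ is a purely algebraic consequence of the expansion: the contributions from the terms that originally multiplied $u_j^n$ (with a minus sign) cancel with the diagonal bracket, and what remains is just the stated identity.

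Finally, since $u_j^{n+1}$ is written as a convex combination of $\{u_i^n\}_i$, I obtain
\[
\inf_i u_i^n \;\leq\; u_j^{n+1} \;\leq\; \sup_i u_i^n
\]
for every $j$ and $n$, which is the claim. There is no real obstacle in this proof — the only point requiring care is correct bookkeeping of the nonlocal terms, in particular noting that the weight $W_0$ multiplies the two nearest-neighbour differences while each $W_k$ for $k\geq 1$ multiplies the distance-$k$ differences with the factor $1/k$, exactly matching the structure on the left-hand side of the inequality in \eqref{Assumptions L infty bound}. The condition $\sum_{k=0}^{r+1}W_k = 1$ is not actually needed for the convex-combination argument itself; it plays its role later when translating the bound back into properties of the weights defined in \eqref{second-order weights}.
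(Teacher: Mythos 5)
Your proof is correct and follows essentially the same route as the paper: rearrange \eqref{second-order method incremental form I} into a convex combination of $u_{j-r-1}^n,\dots,u_{j+r+1}^n$, check nonnegativity of the coefficients via \eqref{Assumptions L infty bound}, and note that they sum to one identically. Your side remark that $\sum_k W_k=1$ is not needed for this particular argument is also accurate — the paper's proof does not use it either.
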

\begin{proof}
	By rearranging \eqref{second-order method incremental form I} we get
	\begin{align*}
		u_j^{n+1} &= u_j^n + \left( A_\jphf^n (u_{j+1}^n - u_j^n) - B_\jmhf^n (u_j^n - u_{j-1}^n) \right) W_0\\
		&\mathrel{\phantom{=}}{}+ \sum_{k=1}^{r+1} \left( C_{j,j+k}^n (u_{j+k}^n - u_j^n)  - D_{j-k,j}^n (u_j^n - u_{j-k}^n) \right) \frac{1}{k} W_k\\
		&= \left( 1- (A_\jphf^n +B_\jmhf^n)W_0 - \sum_{k=1}^{r+1} (C_{j,j+k}^n + D_{j-k,j}^n)\frac{1}{k}W_k \right) u_j^n\\
		&\mathrel{\phantom{=}}{} +A_\jphf^n W_0 u_{j+1}^n + B_\jmhf^n W_0 u_{j-1}^n + \sum_{k=1}^{r+1} C_{j,j+k}^n \frac{1}{k} W_k u_{j+k}^n + \sum_{k=1}^{r+1} D_{j-k,j}^n \frac{1}{k} W_k u_{j-k}^n
	\end{align*}
	The condition \eqref{Assumptions L infty bound} ensures that $u_j^{n+1}$ is a convex combination of $u_{j-k}^n,\ldots,u_{j+k}^n$, which proves the discrete maximum principle.
\end{proof}
%
\begin{lemma}\label{Harten's lemma TV bound}
	Let $W_k$ be nonnegative numbers such that $\sum_{k=0}^{r + 1} W_k=1$ and assume that there are numbers $A_\jphf^n,B_\jphf^n,E_\jphf^n, F_\jphf^n\in\R$	(for each $j\in\Z,\ n\in\N_0$) satisfying
	\begin{equation}
		A_\jphf^n,B_\jphf^n,E_\jphf^n, F_\jphf^n \geq 0, \qquad A_\jphf^n+B_\jphf^n+E_\jphf^n+F_\jphf^n \leq 1 \qquad\text{for all }n,j.
		\label{Assumptions Harten's lemma}
	\end{equation}
	Then solutions computed with  the scheme
	\begin{equation}
	\begin{split}
		u_j^{n+1} &= u_j^n + \left( A_\jphf^n (u_{j+1}^n - u_j^n) - B_\jmhf^n (u_j^n - u_{j-1}^n) \right) W_0\\
	&\mathrel{\phantom{=}}+ \sum_{k=1}^{r+1} \sum_{l=1}^k \left( E_\jplmhf^n (u_{j+l}^n - u_{j+l-1}^n)  - F_\jmlphf^n (u_{j-l+1}^n - u_{j-l}^n) \right) \frac{1}{k} W_k
	\end{split}
	\label{second-order method incremental form II}
	\end{equation}
 are TVD, i.e., they satisfy
	\begin{equation}
		\sum_j \left|u_{j+1}^{n+1}-u_j^{n+1}\right| \leq \sum_j \left|u_{j+1}^n -u_j^n\right|.
		\label{TVD estimate}
	\end{equation}
\end{lemma}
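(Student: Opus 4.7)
The plan is the classical Harten strategy: I aim to express $u_{j+1}^{n+1} - u_j^{n+1}$ as a \emph{nonnegative} linear combination of the old differences $\Delta_{i+\hf}^n := u_{i+1}^n - u_i^n$, and verify that, for each fixed interface $i+\hf$, the coefficients summed over $j$ are at most $1$. If I can do that, the triangle inequality together with a swap of summations gives
\begin{equation*}
  \sum_j \bigl|u_{j+1}^{n+1} - u_j^{n+1}\bigr| \;\leq\; \sum_i \Bigl(\sum_j \theta_{j,i+\hf}\Bigr)\bigl|\Delta_{i+\hf}^n\bigr| \;\leq\; \sum_i \bigl|\Delta_{i+\hf}^n\bigr|,
\end{equation*}
which is exactly~\eqref{TVD estimate}.

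The first step is to swap the order of the double sum in~\eqref{second-order method incremental form II}. Setting $\tilde W_l := \sum_{k=l}^{r+1} W_k/k$, the update becomes
\begin{equation*}
  u_j^{n+1} - u_j^n = W_0\bigl( A_\jphf^n \Delta_\jphf^n - B_\jmhf^n \Delta_\jmhf^n\bigr) + \sum_{l=1}^{r+1} \tilde W_l \bigl( E_\jplmhf^n \Delta_\jplmhf^n - F_\jmlphf^n \Delta_\jmlphf^n\bigr).
\end{equation*}
Two elementary facts are used throughout: the telescoping identity $\tilde W_l - \tilde W_{l+1} = W_l/l$, and, since $W_k/k \leq W_k$ for $k\geq 1$, the bound $W_0 + \tilde W_1 \leq \sum_{k=0}^{r+1} W_k = 1$.

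The second step is to identify the coefficient $\theta_{j,i+\hf}$ of $\Delta_{i+\hf}^n$ in $u_{j+1}^{n+1} - u_j^{n+1}$ by case analysis on $i-j$. A given interface $i+\hf$ receives a contribution from the update at $j$ (via $j+l-\hf = i+\hf$ or $j-l+\hf = i+\hf$) and another from the update at $j+1$, where the corresponding $l$-indices differ by one; invoking the telescoping identity then collapses the difference. The outcome is: (i) for $i=j$, $\theta_{j,j+\hf} = 1 - W_0(A_\jphf + B_\jphf) - \tilde W_1(E_\jphf + F_\jphf)$; (ii) for $i = j+1$, $\theta_{j, j+\thf} = W_0 A_{j+\thf} + W_1 E_{j+\thf}$, and symmetrically $\theta_{j, j-\hf} = W_0 B_\jmhf + W_1 F_\jmhf$ for $i=j-1$; (iii) for $|i-j|\geq 2$, the coefficient reduces to $\tfrac{W_{|i-j|}}{|i-j|}$ times the relevant $E$- or $F$-value at $i+\hf$. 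All coefficients are manifestly nonnegative except possibly~(i); but from~\eqref{Assumptions Harten's lemma} we have $A_\jphf + B_\jphf \leq 1$ and $E_\jphf + F_\jphf \leq 1$, so $W_0(A_\jphf + B_\jphf) + \tilde W_1(E_\jphf + F_\jphf) \leq W_0 + \tilde W_1 \leq 1$, giving $\theta_{j,j+\hf} \geq 0$. Summing $\theta_{j,i+\hf}$ over $j$ at a fixed $i+\hf$, the $W_0$-contributions involving $A+B$ in (i) cancel exactly against those in (ii), and the $E+F$ contributions telescope via $W_1 + \sum_{m=2}^{r+1} W_m/m = \tilde W_1$, leaving $\sum_j \theta_{j,i+\hf} = 1$. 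Inserting this into the displayed inequality above gives~\eqref{TVD estimate}.

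The main obstacle is the index bookkeeping in the case analysis: when one passes from $u_j^{n+1}$ to $u_{j+1}^{n+1}$ the summation indices $j\pm l\pm\hf$ all shift by one, so each interface $i+\hf$ picks up contributions from two \emph{different} values of $l$ (one per update), and these must be correctly paired and signed so that the telescoping identity for $\tilde W_l$ applies and produces a nonnegative result. Once this accounting is carried out carefully, both the nonnegativity of the $\theta_{j,i+\hf}$ and the sum-to-one identity follow essentially by inspection.
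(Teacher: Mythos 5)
Your proof is correct and follows essentially the same route as the paper's: both arguments write $u_{j+1}^{n+1}-u_j^{n+1}$ as a nonnegative combination of the old differences (your telescoping identity $\tilde W_l-\tilde W_{l+1}=W_l/l$ is exactly the collapse of the inner $l$-sum that the paper performs directly), verify nonnegativity of the diagonal coefficient from \eqref{Assumptions Harten's lemma} together with $W_0+\sum_{k\geq 1}W_k/k\leq 1$, and conclude by the triangle inequality and a shift of the summation index over $j$. The only difference is presentational — you organize the computation as an explicit coefficient matrix with unit column sums, whereas the paper re-indexes the sums after taking absolute values — so no changes are needed.
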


\begin{proof}
	Using the incremental form \eqref{second-order method incremental form II}, we obtain
	\begin{align*}
		u_{j+1}^{n+1}-u_j^{n+1}
		&= u_{j+1}^n-u_j^n + \Big( A_{j+\thf}^n (u_{j+2}^n - u_{j+1}^n) - A_\jphf^n (u_{j+1}^n - u_j^n)\\
		&\mathrel{\hphantom{=u_{j+1}^n-u_j^n + \Big(}}- B_\jphf^n (u_{j+1}^n - u_j^n) + B_\jmhf^n (u_j^n - u_{j-1}^n) \Big) W_0 \\
		&\mathrel{\phantom{=}} + \sum_{k=1}^{r + 1}\sum_{l=1}^k \bigg(\begin{aligned}[t] &E_{j+l+\hf}^n(u_{j+l+1}^n - u_{j+l}^n) - E_{j+l-\hf}^n(u_{j+l}^n-u_{j+l-1}^n)\\
		&- F_{j-l+\thf}^n(u_{j-l+2}^n-u_{j-l+1}^n) + F_{j-l+\hf}^n(u_{j-l+1}^n-u_{j-l}^n) \bigg) \frac{1}{k} W_k\end{aligned}\\
		&= \left( 1-\left(A_\jphf^n + B_\jphf^n\right) W_0 - \left(E_\jphf^n + F_\jphf^n\right)\left(\sum_{k=1}^{r+1}\frac{1}{k}W_k\right) \right) (u_{j+1}^n - u_j^n) \\
		&\mathrel{\phantom{=}}+ \left( A_{j+\thf}^n (u_{j+2}^n - u_{j+1}^n)+ B_\jmhf^n (u_j^n - u_{j-1}^n) \right) W_0\\
		&\mathrel{\phantom{=}} + \sum_{k=1}^{r + 1} \left( E_{j+k+\hf}^n(u_{j+k+1}^n - u_{j+k}^n)  + F_{j-k+\hf}^n(u_{j-k+1}^n-u_{j-k}^n) \right) \frac{1}{k} W_k\\
	\end{align*}
	Hence, taking absolute values and using
	\begin{equation*}
		W_0\leq 1,\qquad \sum_{k=1}^{r+1} \frac{1}{k} W_k \leq 1,
	\end{equation*}
	and the assumptions \eqref{Assumptions Harten's lemma} yields
	\begin{align*}
		\left|u_{j+1}^{n+1}-u_j^{n+1}\right| &\leq \left(1 -(A_\jphf^n + B_\jphf^n)W_0 -(E_\jphf^n + F_\jphf^n)\left(\sum_{k=1}^{r+1}\frac{1}{k}W_k\right)\right)\left|u_{j+1}^n-u_j^n\right|\\
		&\mathrel{\phantom{\leq}}+ \left( A_{j+\thf}^n\left|u_{j+2}^n - u_{j+1}^n\right| + B_\jmhf^n\left|u_j^n - u_{j-1}^n\right|\right) W_0 \\
		&\mathrel{\phantom{\leq}}+ \sum_{k=1}^{r + 1} \left( E_\jpkphf^n \left|u_{j+k+1}^n-u_{j+k}^n\right| + F_\jmkphf^n \left|u_{j-k+1}^n-u_{j-k}^n\right| \right) \frac{1}{k} W_k.
	\end{align*}
	Summing over $j$ and using
	\begin{equation*}
		\sum_j \sum_{k=1}^{r + 1}E_\jpkphf^n \left|u_{j+k+1}^n-u_{j+k}^n\right|\frac{1}{k} W_k = \left(\sum_{k=1}^{r + 1} \frac{1}{k}W_k\right) \sum_j E_\jphf^n \left|u_{j+1}^n-u_{j}^n\right|
	\end{equation*}
	and a similar calculation for the term involving $F_\jmkphf^n$,
	by identifying equal terms, we obtain the TVD estimate \eqref{TVD estimate}.
\end{proof}
With the help of these two lemmata we can show that the scheme \eqref{FE discretization} for the nonlocal model satisfies the maximum principle and the TVD property under a suitable CFL condition.
\begin{proposition}[Properties of the numerical scheme]\label{lem: properties of the scheme}
	Assume that $u_0\in \Lone(\R)\cap\BV(\R)$ and that the following CFL condition holds:
		\begin{equation}
			\frac{\Dt}{\Dx}\big(\partial_1 g(u,v) - \partial_2 g(w,v)\big) \leq 1
			\label{CFL condition}
		\end{equation}
		for all $u,v,w\in\R$. Then the scheme \eqref{FE discretization} satisfies the following properties:
	\begin{enumerate}[(i)]
		\item It is conservative.
		\item \label{item maximum principle} It enjoys the discrete maximum principle
		\begin{equation}
			\inf_i u_i^n \leq u_j^{n+1} \leq \sup_i u_i^n\qquad\text{for all }n,j.
			\label{discrete maximum principle}
		\end{equation}
		\item It satisfies the TVD property
		\begin{equation*}
			\sum_j \left|u_{j+1}^{n+1}-u_j^{n+1}\right| \leq \sum_j \left|u_{j+1}^n -u_j^n\right|.
		\end{equation*}
		\item It is uniformly $\Lone$-continuous in time as $\Dt\to 0$. More precisely,
		\begin{equation*}
			\int_\R \left| u_\Dx(x,t) - u_\Dx(x,s) \right| \diff x \leq C |t-s| + \mathcal{O}(\Dt).
		\end{equation*}
	\end{enumerate}
\end{proposition}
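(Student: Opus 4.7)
My plan is to verify the four items in order, using the two Harten-type lemmas just established. Conservation (i) is immediate: every term in $\Approx(u)_j$ is a discrete flux difference that telescopes upon summation over $j\in\Z$. The bulk of the argument is to recast the Euler update in the incremental forms required by \Cref{Harten's lemma L infty bound} and \Cref{Harten's lemma TV bound}; once this is done, (ii) and (iii) follow by checking the sign and sum hypotheses under the CFL condition \eqref{CFL condition}. Finally, (iv) reduces, via (iii), to a uniform bound $\Dx\sum_j|\Approx(u^n)_j|\leq C\,\TV(u^n)$, which telescopes across time steps.

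For the nonlocal contribution at lag $k\geq 1$, I would exploit the monotonicity assumptions $\partial_1 g\geq 0$, $\partial_2 g\leq 0$ by splitting
\begin{equation*}
g_{j,j+k}-g_{j-k,j}=\bigl[g(u_j,u_{j+k})-g(u_j,u_j)\bigr]+\bigl[g(u_j,u_j)-g(u_{j-k},u_j)\bigr]=-C_{j,j+k}(u_{j+k}-u_j)+D_{j-k,j}(u_j-u_{j-k}),
\end{equation*}
where the induced difference quotients $C_{j,j+k}$ and $D_{j-k,j}$ are nonnegative and bounded by the Lipschitz constant $L$ of $g$. This furnishes exactly the nonlocal piece demanded by \Cref{Harten's lemma L infty bound}. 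To match the form required by \Cref{Harten's lemma TV bound} I would further telescope $u_{j+k}-u_j=\sum_{l=1}^k(u_{j+l}-u_{j+l-1})$ and $u_j-u_{j-k}=\sum_{l=1}^k(u_{j-l+1}-u_{j-l})$, collecting the differences into ``flat'' coefficients $E_\jplmhf,F_\jmlphf$ that are independent of $l$ and again controlled by $L$.

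The main obstacle is the local reconstructed term $(g_\jphf-g_\jmhf)W_0$, which is the standard second-order MUSCL contribution. Following a Sweby-type analysis, I would introduce intermediate states and write
\begin{equation*}
g(u_j^+,u_{j+1}^-)-g(u_{j-1}^+,u_j^-)=-A_\jphf(u_{j+1}-u_j)+B_\jmhf(u_j-u_{j-1}),
\end{equation*}
where the slope condition \eqref{Assumption on the slopes} is precisely what keeps $A_\jphf$ and $B_\jmhf$ nonnegative despite the reconstruction introducing $\sigma_j$ terms, while \eqref{CFL condition} controls their magnitude. Assembling the local and nonlocal pieces, multiplying by $\lambda=\Dt/\Dx$ to absorb them into the Harten coefficients, and using $\sum_{k=0}^{r+1}W_k=1$ together with the inequality $W_k/k\leq W_k$ for $k\geq 1$, the sum hypotheses of both Harten lemmas hold, so (ii) and (iii) follow. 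For (iv), Lipschitz continuity combined with \eqref{Assumption on the slopes} yields $\sum_j|g_\jphf-g_\jmhf|\leq C\,\TV(u^n)$, and the telescoping bound $|u_{j+k}-u_j|\leq\sum_{l=1}^k|u_{j+l}-u_{j+l-1}|$ gives $\sum_j|g_{j,j+k}-g_{j-k,j}|\leq 2Lk\,\TV(u^n)$; weighting by $W_k/k$ and summing over $k$ produces $\Dx\sum_j|\Approx(u^n)_j|\leq C\,\TV(u^n)\leq C\,\TV(u_0)$ by (iii). Summing $\Dx\sum_j|u_j^{m+1}-u_j^m|$ across the time steps between $s$ and $t$, with an $\mathcal{O}(\Dt)$ correction when either endpoint falls inside a time cell, completes the proof.
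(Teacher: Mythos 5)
Your proposal follows essentially the same route as the paper: conservation by telescoping; the incremental forms of \Cref{Harten's lemma L infty bound} and \Cref{Harten's lemma TV bound} obtained by splitting the flux differences through the intermediate states $g(u_j^{n,+},u_j^{n,-})$ and $g(u_j^n,u_j^n)$, respectively telescoping the nonlocal differences over unit increments, with the slope condition \eqref{Assumption on the slopes} giving nonnegativity of the coefficients and the CFL condition \eqref{CFL condition} the sum bound; and finally the $\Lone$-continuity in time from the Lipschitz/TV bound on $\Approx(u^n)_j$ followed by telescoping over time steps. The only cosmetic difference is that the paper writes the incremental coefficients via the mean value theorem as values of $\partial_1 g$, $\partial_2 g$ times the factors $1\pm\tfrac{1}{2}\tfrac{\sigma_{j+1}-\sigma_j}{u_{j+1}-u_j}$, whereas you bound the same difference quotients by the Lipschitz constant; this changes nothing of substance.
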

\begin{proof}
	\begin{enumerate}[(i)]
		\item This is straightforward from the definition.
		\item Defining
		\begin{align*}
			A_\jphf^n &\coloneq -\lambda \frac{g^n_\jphf - g(u_j^{n,+},u_j^{n,-})}{u_{j+1}^n - u_j^n} &
			B_\jmhf^n &\coloneq \lambda \frac{g(u_j^{n,+},u_j^{n,-}) - g^n_\jmhf}{u_j^n - u_{j-1}^n}\\
			C_{j,j+k}^n &\coloneq -\lambda \frac{g^n_{j,j+k} - g^n_{j,j}}{u_{j+k}^n - u_{j}^n} & D_{j-k,j}^n &\coloneq \lambda \frac{g^n_{j,j} - g^n_{j-k,j}}{u_{j}^n - u_{j-k}^n}
		\end{align*}
		we can rewrite the scheme \eqref{FE discretization} in the form \eqref{second-order method incremental form I}.
		It remains to verify that the conditions \eqref{Assumptions L infty bound} of \Cref{Harten's lemma L infty bound} are satisfied in order to conclude the discrete maximum principle.
		Using the mean value theorem, we can calculate
		\begin{align*}
			A_\jphf^n &= - \lambda\partial_2 g(u_j^{n,+},\xi) \frac{u_{j+1}^{n,-} - u_j^{n,-}}{u_{j+1}^n - u_j^n}\\
			&=-\lambda\partial_2 g(u_j^{n,+},\xi) \frac{u_{j+1}^n-\frac{1}{2}\sigma_{j+1}^n-u_j^n +\frac{1}{2}\sigma_j^n}{u_{j+1}^n - u_j^n}\\
			&=-\lambda\partial_2 g(u_j^{n,+},\xi) \left( 1-\frac{1}{2}\frac{\sigma_{j+1}^n-\sigma_j^n}{u_{j+1}^n - u_j^n}\right)
		\intertext{for some $\xi$ between $u_{j+1}^{n,-}$ and $u_j^{n,-}$. Similarly,}
			B_\jphf^n &=\lambda\partial_1 g(\zeta,u_j^{n,-}) \left( 1 + \frac{1}{2}\frac{\sigma_{j+1}^n-\sigma_{j}^n}{u_{j+1}^n - u_j^n}\right).
		\intertext{On the other hand,}
		C_{j,j+k}^n &=-\lambda\partial_2 g\left(u_j^n,\widetilde{\xi}\right) \frac{u_{j+k}^n-u_j^n}{u_{j+k}^n - u_j^n} = -\lambda\partial_2 g\left(\widetilde{\xi}\right)
		\intertext{and analogously}
		D_{j-k,j}^n &= \lambda\partial_1 g\left(\widetilde{\zeta},u_j^n\right).
		\end{align*}
		Since the approximated slopes $\sigma_j$ satisfy the TVD requirement \eqref{Assumption on the slopes}
		and $g$ satisfies the monotonicity assumption \eqref{Assumptions on g}, the CFL condition \eqref{CFL condition} ensures that \eqref{Assumptions L infty bound} holds.
		\item With the help of the telescoping sum
		\begin{equation*}
			g^n_{j,j+k} - g^n_{j-k,j} = \sum_{l=1}^k \big((g^n_{j,j+l} - g^n_{j,j+l-1}) + (g^n_{j-l+1,j} - g^n_{j-l,j})\big),
		\end{equation*}
		defining $A_\jphf^n$ and $B_\jmhf^n$ as in (ii) and
		\begin{align*}
			E_\jplmhf^n \coloneq -\lambda \frac{g^n_{j,j+l} - g^n_{j,j+l-1}}{u_{j+l}^n - u_{j+l-1}^n}, \qquad
			F_\jmlphf^n \coloneq \lambda\frac{g^n_{j-l+1,j} - g^n_{j-l,j}}{u_{j-l+1}^n - u_{j-l}^n},
		\end{align*}
		we can rewrite the scheme \eqref{FE discretization} in the form \eqref{second-order method incremental form II}. A similar calculation as in (ii) shows that the assumptions \eqref{Assumptions Harten's lemma} of \Cref{Harten's lemma TV bound} are satisfied.
		\item It remains to show the $\Lone$ continuity in time as $\Dt\to 0$. First note that
	\begin{align*}
		|u_j^{n+1} -u_j^n|
		&= \lambda \left|(g^n_\jphf - g^n_\jmhf)W_0 + \sum_{k=1}^{r+1} (g^n_{j,j+k} - g^n_{j-k,j})\frac{1}{k}W_k\right|\\
		&\leq \lambda \left|g^n_\jphf - g^n_\jmhf\right|W_0 + \sum_{k=1}^{r+1}\sum_{l=1}^k \Big(\left|g^n_{j,j+l}-g^n_{j,j+l-1}\right| +\left|g^n_{j-l+1,j} - g^n_{j-l,j}\right|\Big)\frac{1}{k}W_k
	\end{align*}
	With the Lipschitz continuity and \eqref{Assumption on the slopes} we find
	\begin{equation*}
		\left|g^n_{j,j+l}-g^n_{j,j+l-1}\right|  \leq C \left| u_{j+l}^n - u_{j+l-1}^n \right|
	\end{equation*}
	as well as
	\begin{align*}
		\left|g^n_\jphf - g^n_\jmhf \right| &\leq \left|g(u_j^{n,+},u_{j+1}^{n,-}) -g(u_j^{n,+},u_j^{n,-})\right| + \left| g(u_j^{n,+},u_j^{n,-}) - g(u_{j-1}^{n,+},u_j^{n,-})\right|\\
		&\leq C \left(\left| u_{j+1}^n - u_j^n\right| + \frac{1}{2}\left|\sigma_{j+1}^n - \sigma_j^n\right| + \left|u_j^n-u_{j-1}^n\right| + \frac{1}{2}\left|\sigma_j^n - \sigma_{j-1}^n\right| \right)\\
		&\leq C \left(\left| u_{j+1}^n - u_j^n\right| + \left|u_j^n-u_{j-1}^n\right| \right)
	\end{align*}
	Thus, we have
	\begin{equation*}
		\Dx\sum_j |u_j^{n+1} - u_j^n | \leq C \TV(u^n)\Dt \leq C \TV(u_0)\Dt.
	\end{equation*}
	Finally, for $t\in[t^m,t^{m+1})$ and $s\in[t^l,t^{l+1})$ with $m>l$ we find
	\begin{align*}
		\int_\R \left|u_\Dx(x,t) - u_\Dx(x,s) \right| \diff x &= \Dx \sum_j |u_j^m - u_j^l|\\
		&\leq \sum_{n=l}^{m-1} \Dx\sum_j |u_j^{n+1}-u_j^n|\\
		&\leq C \TV(u_0) (m-l)\Dt\\
		&= C \TV(u_0) (t^m-t^l)\\
		&\leq C \TV(u_0) |t-s| + \mathcal{O}(\Dt).
	\end{align*}
	\end{enumerate}
\end{proof}

\subsection{Convergence of the numerical scheme}

With the help of the \textit{a priori} bounds derived in the previous section we can show that numerical solutions computed with the scheme~\eqref{num scheme} converge and that the limit is a weak solution. To that end, we will first show compactness of the scheme by applying Kolmogorov's theorem.
\begin{lemma}[Compactness]\label{lem: compactness}
	Let $u_0\in\Lone(\R)\cap\BV(\R)$ and let $u_\Dt(x,t) = u_j^n$ for $(x,t)\in \cell_j\times\timecell^n$, where $u_j^n$ is computed by the scheme \eqref{FE discretization}. Further, let $\lambda = \Dt/\Dx$ be fixed such that the CFL condition \eqref{CFL condition} is satisfied. Then there exists a sequence $(\Dt_k)_{k\in\N}$ and a function $u\in\mathcal{C}([0,T];\Lone(\R))$ such that $\Dt_k\to 0$ and $u_{\Dt_k}(t)$ converges to $u(t)$ in $\Lone(\R)$ uniformly for all $t\in[0,T]$.
\end{lemma}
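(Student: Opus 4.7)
The plan is a standard Arzel\`a--Ascoli / Kolmogorov--Riesz compactness argument, for which the three a priori estimates of the previous proposition---the uniform $\Linfty$ bound from the discrete maximum principle, the uniform spatial BV bound from the TVD property, and the uniform $\Lone$ time-continuity of item (iv)---are precisely the right ingredients.

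First, I would verify that, for each fixed $t\in[0,T]$, the family $\{u_{\Dt}(\cdot,t)\}_{\Dt>0}$ is relatively compact in $\Lone(\R)$. The BV bound translates immediately into the spatial equicontinuity estimate $\int_{\R}|u_{\Dt}(x+h,t)-u_{\Dt}(x,t)|\diff x\leq \TV(u_0)|h|$, and together with the uniform $\Linfty$ bound gives Kolmogorov's first condition. For tightness it suffices to show that $\int_{|x|>R}|u_{\Dt}(x,t)|\diff x\to0$ as $R\to\infty$ uniformly in $\Dt$; this follows by using that WLOG $f(0)=0$, so $g(0,0)=0$ and the scheme preserves the zero state outside the numerical stencil applied to the (essentially compact) support of $u_0\in\Lone(\R)$. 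This yields support that grows at a bounded effective rate, so that tightness at $t=0$ transfers to all $t\in[0,T]$.

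Second, fix a countable dense subset $\{q_m\}\subset[0,T]$ and apply a standard diagonal extraction: at each $q_m$ extract a convergent subsequence from the $\Lone(\R)$-relatively-compact family, then pass to a diagonal subsequence $\Dt_k\to 0$ for which $u_{\Dt_k}(\cdot,q_m)\to u(\cdot,q_m)$ in $\Lone(\R)$ for every $m$. Third, upgrade to uniform-in-$t$ convergence on all of $[0,T]$ by an $\eps/3$-argument that combines convergence at the dense times $\{q_m\}$ with the uniform $\Lone$ time-continuity
\begin{equation*}
	\|u_{\Dt_k}(\cdot,t)-u_{\Dt_k}(\cdot,s)\|_{\Lone(\R)}\leq C|t-s|+\mathcal{O}(\Dt_k).
\end{equation*}
Passing this estimate to the limit shows $u\in\mathcal{C}([0,T];\Lone(\R))$.

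The main obstacle is the tightness step: unlike local finite volume schemes, the nonlocal stencil has width of order $\delta$ independent of $\Dx$, so a naive finite-speed-of-propagation argument gives a spreading rate $\delta/\Dt\to\infty$. The clean way around this is to exploit that the scheme is conservative and TVD with $g(0,0)=f(0)=0$, so the zero state is a fixed point and the mass outside a sufficiently large interval is controlled by the tail of $u_0$ together with the bounded BV-mass that can be transported in time $T$. Once this is in place, the rest of the proof is routine.
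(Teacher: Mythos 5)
Your overall strategy is the same as the paper's. The proof given there is essentially two lines: it collects exactly the three \emph{a priori} estimates you name -- the $\Linfty$ bound from the discrete maximum principle, the translation estimate $\|u_\Dt(\cdot+\eps,t)-u_\Dt(\cdot,t)\|_{\Lone(\R)}\leq\eps\,\TV(u_0)$ from the TVD property, and the $\Lone$ time-continuity $\|u_\Dt(\cdot,t)-u_\Dt(\cdot,s)\|_{\Lone(\R)}\leq C|t-s|+\mathcal{O}(\Dt)$ -- and then invokes Kolmogorov's compactness theorem \cite[Theorem A.11]{holden2015front}. The diagonal extraction over a countable dense set of times and the $\eps/3$ upgrade to uniform-in-$t$ convergence that you spell out are precisely the internals of that cited theorem, so this part of your proposal is correct but not a different route.

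The one place you genuinely depart from the paper is the tightness step, and as written it is not a proof. You correctly identify the obstacle (the stencil width is of order $\delta$ per time step with $\delta$ fixed, so the effective propagation speed $\delta/\Dt$ is unbounded as $\Dt\to0$), but the proposed repair -- conservation plus TVD plus $g(0,0)=0$, with ``the bounded BV-mass that can be transported in time $T$'' -- does not close the argument: conservation controls $\sum_j u_j^n\Dx$, not the tail of $\sum_j|u_j^n|\Dx$; the TVD property controls oscillation, not where mass sits; and the scheme is not monotone, so the usual $\Lone$-contraction/Crandall--Tartar route to a uniform tail estimate is unavailable. One would need an actual quantitative weighted-$\Lone$ or tail estimate here. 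Note that the paper sidesteps the issue entirely: the cited compactness theorem delivers convergence in $\mathcal{C}([0,T];\Loneloc(\R))$, which is all that is used downstream in \Cref{thm: Lax--Wendroff-type theorem} (whose statement is indeed in $\Loneloc$), even though the lemma as stated claims convergence in $\Lone(\R)$. So either settle for $\Loneloc$ convergence, in which case your proof is complete without the tightness paragraph, or supply a genuine tail estimate; the heuristic you give does not yet constitute one.
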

\begin{proof}
	An application of Kolmogorov's compactness theorem \cite[Theorem A.11]{holden2015front} requires an $\Linfty$ bound, a TV bound, and $\Lone$ continuity in time as $\Dt\to 0$. In view of \Cref{lem: properties of the scheme}, we have
	\begin{align*}
		\|u_\Dt\|_{\Linfty(\R)} &\leq \|u_0\|_{\Linfty(\R)},\\
		\|u_\Dt (\cdot +\varepsilon,t) - u_\Dt(\cdot,t)\|_{\Lone(\R)} &\leq \varepsilon \TV(u_\Dt(\cdot,t)) \leq \varepsilon \TV(u_0), \\
		\left\| u_\Dt(\cdot,t) - u_\Dt(\cdot,s) \right\|_{\Lone(\R)} &\leq C \TV(u_0) |t-s| + \mathcal{O}(\Dt)
	\end{align*}
	as $\Dt\to0$. Kolmogorov's compactness theorem then ensures the existence of a subsequence $\Dt_k\to 0$ and a function $u\in \mathcal{C}([0,T];\Lone(\R))$ such that $u_{\Dt_k}(t)$ converges to $u(t)$ in $\Lone(\R)$ uniformly for all $t\in[0,T]$.
\end{proof}
We will now show that the limit is in fact a weak solution by proving the following Lax--Wendroff-type theorem (cf.~\cite{lax1960systems}).
\begin{theorem}[Convergence towards a weak solution]\label{thm: Lax--Wendroff-type theorem}
	Let $u_0\in\Lone(\R)\cap\BV(\R)$ and let $u_\Dt$ be computed by the scheme \eqref{FE discretization} with $\lambda=\Dt/\Dx$ fixed such that the discrete flux $g$ satisfies the CFL condition \eqref{CFL condition}. Then there exists a subsequence $\Dt_k\to 0$ such that $u_{\Dt_k}(t)$ converges in $\Loneloc(\R)$ for all $t$ to a function $u\in \mathcal{C}([0,T];\Lone(\R))$ which is a weak solution to \eqref{nonlocal model}.
	If we additionally assume $\int_0^\delta \frac{\omega_\delta(h)}{h}\diff h <\infty$ then the whole sequence $u_{\Dt}$ converges towards the unique entropy solution of~\eqref{nonlocal model}.
\end{theorem}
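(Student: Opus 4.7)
The plan is to combine the compactness result of \Cref{lem: compactness} with a Lax--Wendroff-type argument adapted to the reconstruction-based nonlocal scheme, and then invoke \Cref{theorem 1} to upgrade the conclusion to convergence towards the unique entropy solution under the growth assumption on $\omega_\delta$. Since \Cref{lem: compactness} already furnishes a subsequence $\Dt_k\to 0$ and a limit $u\in\mathcal{C}([0,T];\Lone(\R))$ with $u_{\Dt_k}(\cdot,t)\to u(\cdot,t)$ in $\Lone(\R)$ uniformly in $t$, the only real work is to identify the limit as a weak solution.

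For the Lax--Wendroff step, pick $\phi\in\mathcal{C}^\infty_c(\R\times[0,T))$, set $\phi_j^n=\phi(x_j,t^n)$, multiply the forward Euler scheme \eqref{FE discretization} by $\Dx\,\phi_j^n$, and sum over $j\in\Z$ and $n=0,\dots,N-1$ with the overall factor $\Dt$. Discrete summation by parts in $n$ produces the discrete analogues of $\iint u\,\partial_t\phi\,\diff x\diff t$ and $\int u_0\phi(\cdot,0)\diff x$, which by smoothness of $\phi$ and the $\Loneloc$ convergence of $u_{\Dt_k}$ pass to their continuum counterparts. Summation by parts in $j$ (telescoping in the reconstruction piece and a straightforward index shift $m=j-k$ in the nonlocal piece) transforms the spatial operator into
\begin{equation*}
\Dx\sum_j\Approx(u^n)_j\,\phi_j^n = -\sum_j g^n_{\jphf}\bigl(\phi^n_{j+1}-\phi^n_j\bigr)W_0 - \sum_{k=1}^{r+1}\sum_j g^n_{j,j+k}\,\frac{\phi^n_{j+k}-\phi^n_j}{k}\,W_k,
\end{equation*}
which is the Riemann-sum analogue of the weak formulation with the difference operator transferred onto $\phi$.

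Passing to the limit relies on three ingredients. First, the TVD bound yields $\Dx\sum_j|u_j^{n,\pm}-u_j^n|\leq\tfrac12\Dx\,\TV(u_0)$, so the reconstructed edge values coincide with the cell averages in $\Lone$ as $\Dx\to 0$, and $g^n_{\jphf}$ is asymptotically interchangeable with $g(u_j^n,u_{j+1}^n)$. Second, the $\Loneloc$ convergence $u_{\Dt_k}\to u$ together with Lipschitz continuity of $g$ lets one replace each $g^n_{j,j+k}$ by $g(u(x,t),u(x+k\Dx,t))$ with vanishing error. Third, by construction of the weights \eqref{second-order weights} the trapezoidal identity \eqref{eqn: integral approximation} applied to the smooth integrand $h\mapsto g(u,\tau_hu)\bigl(\phi(x+h,t)-\phi(x,t)\bigr)/h$ guarantees that $W_0,W_1,\dots,W_{r+1}$ correctly sample $\omega_\delta$; combining the three ingredients and summing in time with weight $\Dt$ reassembles exactly the weak formulation of \eqref{nonlocal model} with $u$ in place of $u_\Dt$.

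For the final assertion, once $u$ is known to be a weak solution and $\int_0^\delta\omega_\delta(h)/h\,\diff h<\infty$, \Cref{theorem 1} identifies $u$ as the unique entropy solution, so a standard argument (every subsequence has a further subsequence converging to the same limit) promotes subsequential convergence to convergence of the whole family $u_\Dt$. The main obstacle is the third ingredient above: verifying that the $h=0$ endpoint of the trapezoidal rule, which carries the weight $W_0$ and is married to the reconstruction-based flux $g^n_{\jphf}$ rather than $g(u_j^n,u_{j+1}^n)$, combines with the interior quadrature points to reproduce the nonlocal integral in the limit, and that the $\mathcal{O}(\Dx^2)$ quadrature remainder in \eqref{eqn: integral approximation} remains negligible when the integrand inherits only the BV regularity of $u$.
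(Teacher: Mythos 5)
Your overall architecture --- compactness via \Cref{lem: compactness}, a discrete summation-by-parts identity, and then \Cref{theorem 1} together with a subsequence-of-a-subsequence argument for the last assertion --- coincides with the paper's, and your first two ``ingredients'' are sound. (The $W_0$-term is in fact even easier than you make it: after summation by parts its total contribution is $O(W_0)$, and $W_0\le\int_0^{\Dx}\omega_\delta(h)\diff h\to0$ by absolute continuity of the integral, so no comparison of $g^n_\jphf$ with $g(u_j^n,u_{j+1}^n)$ is required.) The genuine gap is exactly the one you flag as ``the main obstacle'': your third ingredient. The trapezoidal identity \eqref{eqn: integral approximation} is stated only for integrands $G$ with bounded second derivative on $[0,\delta]$, whereas the integrand $h\mapsto g(u,\tau_hu)(\tau_h\phi-\phi)/h$ built from a limit $u$ that is merely BV in $x$ has no such regularity in $h$. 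Consequently \eqref{eqn: integral approximation} cannot be invoked to show that the weights $W_0,\dots,W_{r+1}$ ``correctly sample'' $\omega_\delta$, and the $\mathcal{O}(\Dx^2)$ remainder bound you would need is unavailable. Since this is precisely the step that converts the discrete sum over $k$ into the integral over $h$, the proof is incomplete as written.

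The paper closes this step without any quadrature error estimate, by dominated convergence in the variable $h$. Writing $W_k=\int_{\I_k\cup\I_{k+1}}\Phi_k(h)\omega_\delta(h)\diff h$, the discrete spatial term becomes $\int_0^T\int_\R\int_0^\delta\widehat G(x,t,h)\,\omega_\delta(h)\diff h\diff x\diff t$, where on each $\I_k$ the function $\widehat G$ is the convex combination, with coefficients $\Phi_{k-1}(h)$ and $\Phi_k(h)$ satisfying $\Phi_{k-1}+\Phi_k\equiv1$ on $\I_k$, of two Riemann-sum expressions. For a.e.\ \emph{fixed} $h$, each of the two converges to $\int_0^T\int_\R\frac{\phi-\tau_h\phi}{h}g(u,\tau_hu)\diff x\diff t$: the $k$-term by the argument of \cite[Lem.~4.9]{du2017nonlocal} with $q$ replaced by $g$, and the $(k-1)$-term because its difference from the $k$-term is bounded by $CT\Dx/(h-\Dx)+CT\TV(u_0)\Dx$, using only the Lipschitz continuity of $g$ and the TV bound. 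The uniform estimate $|\omega_\delta(h)\int_0^T\int_\R\widehat G\diff x\diff t|\le C\,\omega_\delta(h)\in\Lone(0,\delta)$ then justifies exchanging the limit with the $h$-integral. If you wish to keep your formulation, replace the appeal to \eqref{eqn: integral approximation} by this continuity-in-$h$ plus dominated-convergence argument; continuity of $h\mapsto\int_0^T\int_\R\frac{\phi-\tau_h\phi}{h}g(u,\tau_hu)\diff x\diff t$, which follows from continuity of translation in $\Lone$, is all the regularity that is actually needed.
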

We want to emphasize here that the statement of \Cref{thm: Lax--Wendroff-type theorem} pertaining uniqueness of weak solutions is generally false for local conservation laws where weak solutions are not unique.

\begin{proof}
From \Cref{lem: compactness} we know that there exists a subsequence $\Dt_k\to 0$ and a function $u\in\mathcal{C}([0,T];\Lone(\R))$ such that $u_{\Dt_k}(t)$ converges uniformly in $\Lone(\R)$ to $u(t)$ for all $t\in[0,T]$. For ease of notation we will denote this subsequence by $\Dt$.
Assuming for the moment that $u$ is a weak solution of~\eqref{nonlocal model}, if $\int_0^\delta \frac{\omega_\delta(h)}{h}\diff h<\infty$ then, in view of \Cref{theorem 1}, the limit $u$ is in fact an entropy solution. Since entropy solutions of \eqref{nonlocal model} are unique (cf.~\cite[Thm. 2.3]{du2017nonlocal}), the whole sequence $u_\Dt$ converges.

We will now show that $u$ is indeed a weak solution.
To that end, we multiply \eqref{FE discretization}
by $-\Dx \phi_j^n$, where
\begin{equation*}
	\phi_j^n \coloneqq \frac{\phi(x_\jmhf,t^n)+\phi(x_\jphf,t^n)}{2}
\end{equation*}
for some test function $\phi\in\mathcal{C}_c^\infty(\R\times[0,T))$.
Summing over $n$ and $j$, we get
\begin{equation}
\begin{split}
	-\Dx \sum_{n=0}^N \sum_j (u_j^{n+1} - u_j^n) \phi_j^n = {}&\Dt \sum_{n=0}^N \sum_j  (g^n_\jphf - g^n_\jmhf)W_0 \phi_j^n\\
	&+\Dt \sum_{n=0}^N\sum_j\sum_{k=1}^{r+1}(g^n_{j,j+k} - g^n_{j-k,j}) \frac{1}{k}W_k \phi_j^n.
\end{split}
	\label{eqn: discrete weak formulation}
\end{equation}
Using summation by parts, the left-hand side of \eqref{eqn: discrete weak formulation} can be rewritten as
\begin{equation*}
	\Dx\sum_j u_j^0 \phi_j^0 + \Dx\Dt \sum_{n=1}^N \sum_j u_j^n \frac{\phi_j^n - \phi_j^{n-1}}{\Dt}
\end{equation*}
where we can pass to the limit $\Dt\to 0$ to get
\begin{equation*}
	\int_\R u_0(x)\phi(x,0)\diff x + \int_0^T \int_\R u \frac{\partial \phi}{\partial t}\diff x\diff t.
\end{equation*}
Thus it remains to show that the right-hand side of~\eqref{eqn: discrete weak formulation} converges to
\begin{equation*}
	\int_0^T\int_\R \int_0^\delta \frac{\phi-\tau_h\phi}{h} g(u,\tau_h u)\omega_\delta(h)\diff h\diff x\diff t
\end{equation*}
as $\Dx\to 0$.
Using the definition of the weights $W_k$, the right-hand side of~\eqref{eqn: discrete weak formulation} is equal to
\begin{align*}
	&\Dt\Dx \sum_{n=0}^N\sum_j \bigg[ \int_{\I_1}\left( \frac{g^n_\jphf-g^n_\jmhf}{\Dx}\Phi_0(h) + \frac{g^n_{j,j+1}-g^n_{j-1,j}}{\Dx} \Phi_1(h) \right)\omega_\delta(h)\diff h\\
	&\phantom{\Dt\Dx \sum_{n=0}^N\sum_j \bigg[} + \sum_{k=2}^{r+1} \int_{\I_k}\left( \frac{g^n_{j,j+(k-1)}-g^n_{j-(k-1),j}}{(k-1)\Dx} \Phi_{k-1}(h) + \frac{g^n_{j,j+k}-g^n_{j-k,j}}{k\Dx} \Phi_k(h) \right)\omega_\delta(h)\diff h \bigg]\phi_j^n\\
	&= \Dt\Dx\sum_{n=0}^N\sum_j \bigg[ \int_{\I_1}\left( g^n_\jphf \Phi_0(h) + g^n_{j,j+1}\Phi_1(h) \right) \frac{\phi_j^n - \phi_{j+1}^n}{\Dx}\omega_\delta(h)\diff h\\
	&\phantom{\mathrel{=}\Dt\Dx\sum_{n=0}^N\sum_j \bigg[}+\sum_{k=2}^{r+1} \int_{\I_k}\left( g^n_{j,j+(k-1)}\frac{\phi_j^n - \phi_{j+(k-1)}^n}{(k-1)\Dx} \Phi_{k-1}(h) + g^n_{j,j+k}\frac{\phi_j^n - \phi_{j+k}^n}{k\Dx} \Phi_k(h) \right)\omega_\delta(h)\diff h \bigg].
\end{align*}
We define
\begin{equation*}
	\widehat{G}(x,t,h) = \begin{cases}
		\left( g^n_\jphf \Phi_0(h) + g^n_{j,j+1}\Phi_1(h) \right) \frac{\phi_j^n - \phi_{j+1}^n}{\Dx}& h\in\I_1,\\
		g^n_{j,j+(k-1)}\frac{\phi_j^n - \phi_{j+(k-1)}^n}{(k-1)\Dx} \Phi_{k-1}(h) + g^n_{j,j+k}\frac{\phi_j^n - \phi_{j+k}^n}{k\Dx} \Phi_k(h)& h\in\I_k, k\in\{2,\ldots,r+1\}
	\end{cases}
\end{equation*}
for $x\in\cell_j$ and $t\in\timecell^n$.
Then the foregoing expression (and thus the right-hand side of~\eqref{eqn: discrete weak formulation}) is equal to
\begin{equation*}
	\sum_{n=0}^N\sum_j \sum_{k=1}^{r+1} \int_{\timecell^n}\int_{\cell_j} \int_{\I_k} \widehat{G}(x,t,h)\omega_\delta(h)\diff h\diff x\diff t = \int_0^T \int_\R\int_0^\delta \widehat{G}(x,t,h)\omega_\delta(h)\diff h\diff x\diff t.
\end{equation*}
We want to use the dominated convergence theorem in the variable $h$ to show that
\begin{align*}
	\lim_{\Dx\to 0} \int_0^T\int_\R\int_0^\delta \widehat{G}(x,t,h)\omega_\delta(h)\diff h\diff x\diff t &= \lim_{\Dx\to 0} \int_0^\delta \omega_\delta(h) \int_0^T\int_\R \widehat{G}(x,t,h)\diff x\diff t \diff h\\
	&= \int_0^\delta \omega_\delta(h) \lim_{\Dx\to 0} \int_0^T\int_\R \widehat{G}(x,t,h) \diff x\diff t\diff h\\
	&= \int_0^\delta \omega_\delta(h) \int_0^T\int_\R \frac{\phi-\tau_h \phi}{h} g(u,\tau_h u) \diff x\diff t \diff h\\
	&= \int_0^T\int_\R \int_0^\delta \frac{\phi-\tau_h\phi}{h} g(u,\tau_h u) \omega_\delta(h) \diff h\diff x\diff t.
\end{align*}
To that end, we need to verify that:
\begin{enumerate}[(i)]
	\item For a.e. $h\in(0,\delta)$
	\begin{equation*}
		\lim_{\Dx\to 0} \int_0^T\int_\R \widehat{G}(x,t,h) \diff x\diff t = \int_0^T\int_\R \frac{\phi-\tau_h \phi}{h} g(u,\tau_h u)\diff x\diff t.
	\end{equation*}
	\item There exists $\overline{G}\in\mathrm{L}^1(0,\delta)$ such that
	\begin{equation*}
		\left| \omega_\delta(h) \int_0^T\int_\R \widehat{G}(x,t,h)\diff x\diff t\right| \leq \overline{G}(h)
	\end{equation*}
	for all $h\in(0,\delta)$.
\end{enumerate}
\paragraph{Ad (ii):} Let the support of $\phi$ be in $(a,b)\times[0,T)$. Since
\begin{equation*}
	\left|\frac{\phi_j^n - \phi_{j+k}^n}{k\Dx}\right| \leq \|\phi_x\|_{\mathrm{L}^\infty(\R\times(0,T))}
\end{equation*}
and because of the Lipschitz continuity of $g$ and the $\mathrm{L}^\infty$ bound of $u_j^n$ we have
\begin{align*}
	\left| \omega_\delta(h) \int_0^T\int_\R \widehat{G}(x,t,h)\diff x\diff t\right| &\leq  \omega_\delta(h) \int_0^T\int_{a-\delta}^{b+\delta} \left|\widehat{G}(x,t,h)\right|\diff x\diff t\\
	&\leq C T (b-a+2\delta) \|\phi\|_{\mathrm{L}^\infty(\R\times(0,T))} \omega_\delta(h)
\end{align*}
which is integrable since $\omega_\delta\in\mathrm{L}^1(0,\delta)$.
\paragraph{Ad (i):} By using exactly the same steps as in~\cite[Lem. 4.9]{du2017nonlocal}\footnote{Note that the definition of $\phi_j^n$ used here ($\phi_j^n = \frac{1}{2}(\phi(x_\jmhf,t^n)+\phi(x_\jphf,t^n)\geq 0$) is not the same as \cite[Eq. (4.15)]{du2017nonlocal} ($\phi_j^n = \frac{1}{\Dx}(\phi(x_\jmhf,t^n)-\phi(x_\jphf,t^n)) \not\geq 0$). After careful inspection of the proofs of \cite{du2017nonlocal} we consider this a misprint.} except for substituting the entropy flux $q$ by $g$ we see that for a.e. $h\in(0,\delta)$
\begin{equation*}
	\lim_{\Dx\to 0} \Dx\Dt\sum_{n=0}^N\sum_j \frac{\phi_j^n - \phi_{j+k}^n}{k\Dx} g^n_{j,j+k} = \int_0^T\int_\R \frac{\phi-\tau_h \phi}{h} g(u,\tau_h u)\diff x\diff t
\end{equation*}
where, for each $\Dx$, $k$ is such that $(k-1)\Dx< h <k\Dx$. Thus it suffices to show that
\begin{equation*}
	\lim_{\Dx\to 0} \left| \int_0^T\int_\R \widehat{G}(x,t,h)\diff x\diff t - \Dx\Dt\sum_{n=0}^N\sum_j \frac{\phi_j^n - \phi_{j+k}^n}{k\Dx} g^n_{j,j+k} \right| = 0
\end{equation*}
for a.e. $h\in(0,\delta)$. Now, we fix $h\in(0,\delta)$ and without restriction assume that $\Dx<h$. Then there exists $k\in\{2,\ldots,r+1\}$ such that $h\in[(k-1)\Dx,k\Dx)$. We have
\begin{align*}
	&\left| \int_0^T\int_\R \widehat{G}(x,t,h)\diff x\diff t - \Dx\Dt\sum_{n=0}^N\sum_j \frac{\phi_j^n - \phi_{j+k}^n}{k\Dx} g^n_{j,j+k} \right|\\
	&\leq \Dx\Dt \sum_{n=0}^N\sum_j \Bigg| \frac{\phi_j^n - \phi_{j+(k-1)}^n}{(k-1)\Dx} g^n_{j,j+(k-1)} \Phi_{k-1}(h) + \frac{\phi_j^n-\phi_{j+k}^n}{k\Dx} g^n_{j,j+k} {\underbrace{(\Phi_k(h) - 1)}_{=-\Phi_{k-1}(h)}} \Bigg|\\
	&\leq \Dx\Dt\sum_{n=0}^N\sum_j \bigg[ \left| \frac{\phi_j^n-\phi_{j+(k-1)}^n}{(k-1)\Dx} - \frac{\phi_j^n - \phi_{j+k}^n}{k\Dx} \right| |g^n_{j,j+(k-1)}|
	+\left|\frac{\phi_j^n-\phi_{j+k}^n}{k\Dx}\right| \left| g^n_{j,j+(k-1)}-g^n_{j,j+k} \right| \bigg] \Phi_{k-1}(h).
\end{align*}
Repeated application of the mean value theorem shows that
\begin{equation*}
	\left|\frac{\phi_j^n-\phi_{j+(k-1)}^n}{(k-1)\Dx} - \frac{\phi_j^n - \phi_{j+k}^n}{k\Dx}\right| \leq 2\|\phi_x\|_{\mathrm{L}^\infty(\R\times(0,T))} \frac{1}{k-1}.
\end{equation*}
Since $h\leq k\Dx$ we have $k-1 \geq \frac{h}{\Dx}-1$ and thus we can bound $\frac{1}{k-1}$ from above by $\frac{\Dx}{h-\Dx}$ which converges to zero as $\Dx\to 0$.
Using the fact that $|g^n_{j,j+(k-1)}|$ is bounded, that $\Phi_{k-1}(h)\leq 1$, and that the sum over $j$ is finite since $\phi$ has compact support, we get
\begin{equation*}
	\Dx\Dt\sum_{n=0}^N\sum_j \left| \frac{\phi_j^n-\phi_{j+(k-1)}^n}{(k-1)\Dx} - \frac{\phi_j^n - \phi_{j+k}^n}{k\Dx} \right| |g^n_{j,j+(k-1)}| \Phi_{k-1}(h) \leq C T \frac{\Dx}{h-\Dx} \to 0
\end{equation*}
as $\Dx\to 0$.
On the other hand, since
\begin{equation*}
	\left| \frac{\phi_j^n-\phi_{j+k}^n}{k\Dx}\right| \leq \|\phi_x\|_{\mathrm{L}^\infty(\R\times(0,T))}
\end{equation*}
we can use the Lipschitz continuity of $g$ and the total variation bound of $u_j^n$ to get
\begin{align*}
	\Dx\Dt\sum_{n=0}^N\sum_j \left| \frac{\phi_j^n-\phi_{j+k}^n}{k\Dx}\right| & |g^n_{j,j+(k-1)} - g^n_{j,j+k}| \Phi_{k-1}(h)\\
	&\leq C\|\phi_x\|_{\mathrm{L}^\infty(\R\times(0,T))} \Dx\Dt\sum_{n=0}^N\sum_j |u_{j+k}-u_{j+k-1}|\\
	&\leq C T\|\phi_x\|_{\mathrm{L}^\infty(\R\times(0,T))} \TV(u_0) \Dx \to 0
\end{align*}
as $\Dx\to 0$.

Hence, we can apply the dominated convergence theorem to obtain
\begin{equation*}
	\lim_{\Dx\to 0} \int_0^T\int_\R\int_0^\delta \widehat{G}(x,t,h)\omega_\delta(h)\diff h\diff x\diff t
	= \int_0^T\int_\R \int_0^\delta \frac{\phi-\tau_h\phi}{h} g(u,\tau_h u) \omega_\delta(h) \diff h\diff x\diff t.
\end{equation*}
showing that $u$ is a weak solution.
\end{proof}

\section{Numerical experiments}\label{sec: Numerical experiments}
%
In this section we present a series of numerical experiments for the second-order scheme~\eqref{num scheme}. Our aims are the following:
\begin{itemize}
	\begin{samepage}
	\item to compare the second-order scheme developed in this paper to the first-order scheme~\eqref{first-order numerical scheme} presented in \cite{du2017nonlocal,du2017numerical};
	\item to compare convergence rates of the second-order scheme for the nonlocal model and the local conservation law;
	\item to illustrate our regularity results from \Cref{sec: Regularity} numerically by comparing numerical solutions of the nonlocal model and the local conservation law in shock and stationary shock regimes;
	\item to numerically investigate how shock formation in the nonlocal model depends on the choice of numerical flux $g$;
	\item to numerically verify that the second-order scheme for the nonlocal model is asymptotically compatible with the entropy solution of the local conservation law.
	\end{samepage}
\end{itemize}
In all our numerical experiments we employ Burgers' flux
\begin{equation*}
	f(u) = \frac{u^2}{2},
\end{equation*}
and -- except when stated otherwise -- use the Godunov flux
\begin{equation*}
	g(u,v) = \max(f(\max(u,0)), f(\min(v,0)))
\end{equation*}
as the numerical flux function \cite{godunov1959difference}.
The slopes $\sigma_j$ which are used in the second-order scheme are computed with the $\operatorname{minmod}$ limiter
\begin{equation*}
	\sigma_j^n = \minmod( u_{j+1}^n - u_j^n, u_j^n - u_{j-1}^n),
\end{equation*}
where
\begin{equation*}
	\minmod(a,b) = \begin{cases}
		a &\text{if }|a|\leq|b| \text{ and }ab>0,\\
		b &\text{if }|a|>|b| \text{ and }ab>0,\\
		0 &\text{if }ab\leq0,
	\end{cases}
\end{equation*}
and we will use the nonlocal interaction kernel
\begin{equation*}
	\omega_\delta (h) = \frac{1+p}{\delta^{1+p}} h^p \ind_{(0,\delta)}(h)
\end{equation*}
for various powers $p>-1$. In all numerical experiments we will use periodic boundary conditions (except when considering the Riemann problem where we use outflow boundary conditions).

\subsection{Experiment 1: Comparison to the first-order scheme}
First, we compare the second-order scheme \eqref{num scheme} to the first-order scheme presented in \cite{du2017nonlocal,du2017numerical}.
To that end, we will consider the initial datum
\begin{equation}
	u_0^1(x) = \frac{1+\sin(2\pi x)}{2}
	\label{numerical experiments: u01}
\end{equation}
on the unit interval, nonlocal horizon $\delta = 0.125$, end time $T = 0.3$, and CFL parameter $\lambda = 0.8$.
\Cref{fig: first- and second-order numerical approximations} shows numerical approximations computed by the first-order scheme presented in \cite{du2017nonlocal,du2017numerical} (blue squares) and the second-order scheme presented in this paper (red circles). Here and in the subsequent experiments the gray dashed line indicates the initial datum.
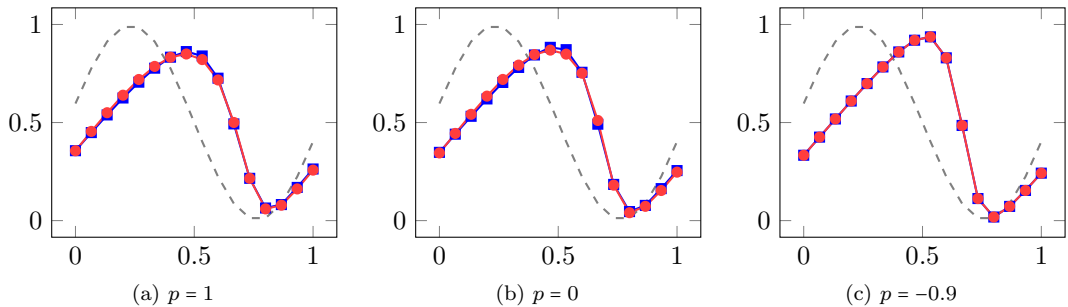
\begin{figure}[t]
\centering
\subfloat[$p=1$]{
\begin{tikzpicture}
\begin{axis}[mark size=1.7pt]
\addplot+[gray, thick, dashed, mark=none] coordinates{
(0.000000000000000000e+00 , 5.969195893703570510e-01)
(6.666666666666666574e-02 , 7.760036392076995471e-01)
(1.333333333333333315e-01 , 9.130686369548897119e-01)
(2.000000000000000111e-01 , 9.872476792022164549e-01)
(2.666666666666666630e-01 , 9.872476792022162329e-01)
(3.333333333333333148e-01 , 9.130686369548897119e-01)
(4.000000000000000222e-01 , 7.760036392076994360e-01)
(4.666666666666666741e-01 , 5.969195893703576061e-01)
(5.333333333333333259e-01 , 4.030804106296423939e-01)
(5.999999999999999778e-01 , 2.239963607923014521e-01)
(6.666666666666666297e-01 , 8.693136304511028811e-02)
(7.333333333333332815e-01 , 1.275232079778287897e-02)
(8.000000000000000444e-01 , 1.275232079778465533e-02)
(8.666666666666666963e-01 , 8.693136304511028811e-02)
(9.333333333333333481e-01 , 2.239963607922996758e-01)
(1.000000000000000000e+00 , 4.030804106296423939e-01)
};
\addplot[mark=square*, blue, thick] coordinates{
(0.000000000000000000e+00 , 3.564778635398128670e-01)
(6.666666666666666574e-02 , 4.488022210724889050e-01)
(1.333333333333333315e-01 , 5.388352100474813611e-01)
(2.000000000000000111e-01 , 6.251966765110856805e-01)
(2.666666666666666630e-01 , 7.057966473148437059e-01)
(3.333333333333333148e-01 , 7.771913828621549447e-01)
(4.000000000000000222e-01 , 8.331910636809136683e-01)
(4.666666666666666741e-01 , 8.618081808634475705e-01)
(5.333333333333333259e-01 , 8.394120780507612345e-01)
(5.999999999999999778e-01 , 7.260954066792674189e-01)
(6.666666666666666297e-01 , 4.930592712559870860e-01)
(7.333333333333332815e-01 , 2.155948460438227299e-01)
(8.000000000000000444e-01 , 6.426270642518244758e-02)
(8.666666666666666963e-01 , 8.187251884270849156e-02)
(9.333333333333333481e-01 , 1.692131848845644104e-01)
(1.000000000000000000e+00 , 2.631907419254773117e-01)
};
\addplot[mark=*, myred, thick]
coordinates{%
(0.000000000000000000e+00 , 3.562561111681398218e-01)
(6.666666666666666574e-02 , 4.548421183845950955e-01)
(1.333333333333333315e-01 , 5.501127662538785046e-01)
(2.000000000000000111e-01 , 6.394206715249751705e-01)
(2.666666666666666630e-01 , 7.195438647187415437e-01)
(3.333333333333333148e-01 , 7.861835470923774771e-01)
(4.000000000000000222e-01 , 8.329366446293657855e-01)
(4.666666666666666741e-01 , 8.495283710478704897e-01)
(5.333333333333333259e-01 , 8.210408164993797664e-01)
(5.999999999999999778e-01 , 7.169771807034363009e-01)
(6.666666666666666297e-01 , 5.003949381318744116e-01)
(7.333333333333332815e-01 , 2.155283562051500679e-01)
(8.000000000000000444e-01 , 5.974168092125764495e-02)
(8.666666666666666963e-01 , 7.855052732049241926e-02)
(9.333333333333333481e-01 , 1.615422071936192250e-01)
(1.000000000000000000e+00 , 2.574001982048459425e-01)
};
\end{axis}
\end{tikzpicture}
\label{fig: p=1}
}%
\subfloat[$p=0$]{
\begin{tikzpicture}
\begin{axis}[mark size=1.7pt]
\addplot+[gray, thick, dashed, mark=none] coordinates{
(0.000000000000000000e+00 , 5.969195893703570510e-01)
(6.666666666666666574e-02 , 7.760036392076995471e-01)
(1.333333333333333315e-01 , 9.130686369548897119e-01)
(2.000000000000000111e-01 , 9.872476792022164549e-01)
(2.666666666666666630e-01 , 9.872476792022162329e-01)
(3.333333333333333148e-01 , 9.130686369548897119e-01)
(4.000000000000000222e-01 , 7.760036392076994360e-01)
(4.666666666666666741e-01 , 5.969195893703576061e-01)
(5.333333333333333259e-01 , 4.030804106296423939e-01)
(5.999999999999999778e-01 , 2.239963607923014521e-01)
(6.666666666666666297e-01 , 8.693136304511028811e-02)
(7.333333333333332815e-01 , 1.275232079778287897e-02)
(8.000000000000000444e-01 , 1.275232079778465533e-02)
(8.666666666666666963e-01 , 8.693136304511028811e-02)
(9.333333333333333481e-01 , 2.239963607922996758e-01)
(1.000000000000000000e+00 , 4.030804106296423939e-01)
};
\addplot[mark=square*, blue, thick] coordinates{
(0.000000000000000000e+00 , 3.479653518087731379e-01)
(6.666666666666666574e-02 , 4.406006176839779576e-01)
(1.333333333333333315e-01 , 5.318410640098089059e-01)
(2.000000000000000111e-01 , 6.203927167803406917e-01)
(2.666666666666666630e-01 , 7.044318024202411710e-01)
(3.333333333333333148e-01 , 7.809811784000021406e-01)
(4.000000000000000222e-01 , 8.445036989438448050e-01)
(4.666666666666666741e-01 , 8.834043770000442608e-01)
(5.333333333333333259e-01 , 8.715533458327894278e-01)
(5.999999999999999778e-01 , 7.562901311414955696e-01)
(6.666666666666666297e-01 , 4.916235468611558357e-01)
(7.333333333333332815e-01 , 1.845823483074860438e-01)
(8.000000000000000444e-01 , 4.607447738129345771e-02)
(8.666666666666666963e-01 , 7.748392933835013263e-02)
(9.333333333333333481e-01 , 1.631544884233862569e-01)
(1.000000000000000000e+00 , 2.551169256670102747e-01)
};
\addplot[mark=*, myred, thick] coordinates{
(0.000000000000000000e+00 , 3.450770751617929788e-01)
(6.666666666666666574e-02 , 4.445018217053804399e-01)
(1.333333333333333315e-01 , 5.419289210653781552e-01)
(2.000000000000000111e-01 , 6.346745368159083522e-01)
(2.666666666666666630e-01 , 7.197573747633537744e-01)
(3.333333333333333148e-01 , 7.927198854492334412e-01)
(4.000000000000000222e-01 , 8.465382511647840946e-01)
(4.666666666666666741e-01 , 8.693311674670067823e-01)
(5.333333333333333259e-01 , 8.487954986471430985e-01)
(5.999999999999999778e-01 , 7.514991881481456470e-01)
(6.666666666666666297e-01 , 5.106047432035554223e-01)
(7.333333333333332815e-01 , 1.807902077664701457e-01)
(8.000000000000000444e-01 , 4.034807502045705679e-02)
(8.666666666666666963e-01 , 7.277634046266115431e-02)
(9.333333333333333481e-01 , 1.535549682148721340e-01)
(1.000000000000000000e+00 , 2.471019449438567606e-01)
};
\end{axis}
\end{tikzpicture}
\label{fig: p=0}
}%
\subfloat[$p=-0.9$]{
\begin{tikzpicture}
\begin{axis}[mark size=1.7pt]
\addplot+[gray, thick, dashed, mark=none] coordinates{
(0.000000000000000000e+00 , 5.969195893703570510e-01)
(6.666666666666666574e-02 , 7.760036392076995471e-01)
(1.333333333333333315e-01 , 9.130686369548897119e-01)
(2.000000000000000111e-01 , 9.872476792022164549e-01)
(2.666666666666666630e-01 , 9.872476792022162329e-01)
(3.333333333333333148e-01 , 9.130686369548897119e-01)
(4.000000000000000222e-01 , 7.760036392076994360e-01)
(4.666666666666666741e-01 , 5.969195893703576061e-01)
(5.333333333333333259e-01 , 4.030804106296423939e-01)
(5.999999999999999778e-01 , 2.239963607923014521e-01)
(6.666666666666666297e-01 , 8.693136304511028811e-02)
(7.333333333333332815e-01 , 1.275232079778287897e-02)
(8.000000000000000444e-01 , 1.275232079778465533e-02)
(8.666666666666666963e-01 , 8.693136304511028811e-02)
(9.333333333333333481e-01 , 2.239963607922996758e-01)
(1.000000000000000000e+00 , 4.030804106296423939e-01)
};
\addplot[mark=square*, blue, thick] coordinates{
(0.000000000000000000e+00 , 3.332342831231698277e-01)
(6.666666666666666574e-02 , 4.257089822601813900e-01)
(1.333333333333333315e-01 , 5.182436554781771898e-01)
(2.000000000000000111e-01 , 6.097168486908681739e-01)
(2.666666666666666630e-01 , 6.987873694819294323e-01)
(3.333333333333333148e-01 , 7.834440187650064669e-01)
(4.000000000000000222e-01 , 8.599675221421915206e-01)
(4.666666666666666741e-01 , 9.197205255694425041e-01)
(5.333333333333333259e-01 , 9.369595085029160941e-01)
(5.999999999999999778e-01 , 8.299564164139434652e-01)
(6.666666666666666297e-01 , 4.851023523371439383e-01)
(7.333333333333332815e-01 , 1.125207246120232540e-01)
(8.000000000000000444e-01 , 1.833438169074281046e-02)
(8.666666666666666963e-01 , 7.241669266369529789e-02)
(9.333333333333333481e-01 , 1.538535765697008617e-01)
(1.000000000000000000e+00 , 2.420331416988674711e-01)
};
\addplot[mark=*, myred, thick] coordinates{
(0.000000000000000000e+00 , 3.332342831231698277e-01)
(6.666666666666666574e-02 , 4.257089822601813900e-01)
(1.333333333333333315e-01 , 5.182436554781771898e-01)
(2.000000000000000111e-01 , 6.097168486908681739e-01)
(2.666666666666666630e-01 , 6.987873694819294323e-01)
(3.333333333333333148e-01 , 7.834440187650064669e-01)
(4.000000000000000222e-01 , 8.599675221421915206e-01)
(4.666666666666666741e-01 , 9.197205255694425041e-01)
(5.333333333333333259e-01 , 9.369595085029160941e-01)
(5.999999999999999778e-01 , 8.299564164139434652e-01)
(6.666666666666666297e-01 , 4.851023523371439383e-01)
(7.333333333333332815e-01 , 1.125207246120232540e-01)
(8.000000000000000444e-01 , 1.833438169074281046e-02)
(8.666666666666666963e-01 , 7.241669266369529789e-02)
(9.333333333333333481e-01 , 1.538535765697008617e-01)
(1.000000000000000000e+00 , 2.420331416988674711e-01)
};
\end{axis}
\end{tikzpicture}
\label{fig: p=-0.9}
}%
\caption{Experiment 1. First- and second-order numerical approximations for $\Dx=1/16$ (blue squares and red circles, respectively) and various values of $p$.}
\label{fig: first- and second-order numerical approximations}
\end{figure}
\Cref{tbl: Errors and convergence rates} shows the $\Lone$ error against an approximation on a very fine grid ($n=1024$ spatial cells) as well as the observed order of convergence of the second-order method.
\begin{table}[t]
\centering
\subfloat[$p=1$]{
\begin{tabular}{@{}rcc@{}}
  \toprule
  \multicolumn{1}{c}{$n$} & $\Lone$ error & $\Lone$ OOC\\
  \midrule
	 $  8$&  $\num{1.440e-02}$ & -- \\
	 $ 16$&  $\num{1.948e-03}$ & $2.89$ \\
	 $ 32$&  $\num{4.092e-04}$ & $2.25$ \\
	 $ 64$&  $\num{9.264e-05}$ & $2.14$ \\
	 $128$&  $\num{2.201e-05}$ & $2.07$ \\
	 $256$&  $\num{5.146e-06}$ & $2.10$ \\
	 $512$&  $\num{1.021e-06}$ & $2.33$ \\
  \bottomrule
\end{tabular}
\label{tbl: p=1}
}%
\subfloat[$p=0$]{
\begin{tabular}{@{}rcc@{}}
  \toprule
  \multicolumn{1}{c}{$n$} & $\Lone$ error & $\Lone$ OOC\\
  \midrule
	$  8$ &  $\num{2.212e-02}$ & -- \\
	$ 16$ &  $\num{3.686e-03}$ & $2.59$ \\
	$ 32$ &  $\num{7.048e-04}$ & $2.39$ \\
	$ 64$ &  $\num{1.473e-04}$ & $2.26$ \\
	$128$ &  $\num{3.277e-05}$ & $2.17$ \\
	$256$ &  $\num{7.348e-06}$ & $2.16$ \\
	$512$ &  $\num{1.426e-06}$ & $2.37$ \\
  \bottomrule
\end{tabular}
\label{tbl: p=0}
}%
\subfloat[$p=-0.9$]{
\begin{tabular}{@{}rcc@{}}
  \toprule
  \multicolumn{1}{c}{$n$} & $\Lone$ error & $\Lone$ OOC\\
  \midrule
	$  8$ &  $\num{5.250e-02}$ & -- \\
	$ 16$ &  $\num{1.951e-02}$ & $1.43$ \\
	$ 32$ &  $\num{6.303e-03}$ & $1.63$ \\
	$ 64$ &  $\num{1.695e-03}$ & $1.89$ \\
	$128$ &  $\num{4.284e-04}$ & $1.98$ \\
	$256$ &  $\num{1.003e-04}$ & $2.09$ \\
	$512$ &  $\num{1.982e-05}$ & $2.34$ \\
  \bottomrule
\end{tabular}
\label{tbl: p=-0.9}
}%
\caption{Experiment 1. $\Lone$ errors and observed order of convergence for various values of $p$.}
\label{tbl: Errors and convergence rates}
\end{table}

\subsection{Experiment 2: Comparison to the local conservation law -- convergence rates}
Since for the local conservation law \eqref{local claw} the initial datum \eqref{numerical experiments: u01} will lead to a discontinuity at time
\begin{equation*}
	t^* = -\frac{1}{\min u_0'(x)} = \frac{1}{\pi}\approx 0.318,
\end{equation*}
it is of interest to investigate the observed order of convergence for times $t>t^*$. To that end, we conduct a second numerical experiment using the same parameters as in Experiment 1, except setting $T = 0.5$.
\begin{figure}[t]
\centering
\subfloat[Local conservation law]{
\begin{tikzpicture}
\begin{axis}[mark size=1.7pt]
\addplot+[gray, thick, dashed, mark=none] coordinates{
(0.000000000000000000e+00 , 5.969195893703570510e-01)
(6.666666666666666574e-02 , 7.760036392076995471e-01)
(1.333333333333333315e-01 , 9.130686369548897119e-01)
(2.000000000000000111e-01 , 9.872476792022164549e-01)
(2.666666666666666630e-01 , 9.872476792022162329e-01)
(3.333333333333333148e-01 , 9.130686369548897119e-01)
(4.000000000000000222e-01 , 7.760036392076994360e-01)
(4.666666666666666741e-01 , 5.969195893703576061e-01)
(5.333333333333333259e-01 , 4.030804106296423939e-01)
(5.999999999999999778e-01 , 2.239963607923014521e-01)
(6.666666666666666297e-01 , 8.693136304511028811e-02)
(7.333333333333332815e-01 , 1.275232079778287897e-02)
(8.000000000000000444e-01 , 1.275232079778465533e-02)
(8.666666666666666963e-01 , 8.693136304511028811e-02)
(9.333333333333333481e-01 , 2.239963607922996758e-01)
(1.000000000000000000e+00 , 4.030804106296423939e-01)
};
\addplot[mark=*, myred, thick]
coordinates{%
(0.000000000000000000e+00 , 2.204888385980622867e-01 )
(3.225806451612903136e-02 , 2.561493602615871890e-01 )
(6.451612903225806273e-02 , 2.925192397389122512e-01 )
(9.677419354838709409e-02 , 3.294512137995458945e-01 )
(1.290322580645161255e-01 , 3.668264986105941317e-01 )
(1.612903225806451568e-01 , 4.045458166252191012e-01 )
(1.935483870967741882e-01 , 4.425192135065961208e-01 )
(2.258064516129032195e-01 , 4.806633090713120504e-01 )
(2.580645161290322509e-01 , 5.189136136098003727e-01 )
(2.903225806451612545e-01 , 5.572516965040674730e-01 )
(3.225806451612903136e-01 , 5.957447501856285577e-01 )
(3.548387096774193727e-01 , 6.344341824839205835e-01 )
(3.870967741935483764e-01 , 6.722208375268129421e-01 )
(4.193548387096773800e-01 , 7.085031243953056057e-01 )
(4.516129032258064391e-01 , 7.437202535156368643e-01 )
(4.838709677419354982e-01 , 7.785655683281522688e-01 )
(5.161290322580645018e-01 , 8.134492431414165337e-01 )
(5.483870967741935054e-01 , 8.486908811232074523e-01 )
(5.806451612903225090e-01 , 8.852093894738725099e-01 )
(6.129032258064516236e-01 , 9.233327334100670525e-01 )
(6.451612903225806273e-01 , 9.476528015176124420e-01 )
(6.774193548387096309e-01 , 9.515241460564158871e-01 )
(7.096774193548387455e-01 , 9.343732403199843661e-01 )
(7.419354838709677491e-01 , 7.935735802691675778e-01 )
(7.741935483870967527e-01 , 2.359572033771295263e-01 )
(8.064516129032257563e-01 , 1.874031345754028494e-02 )
(8.387096774193547599e-01 , 3.648731261373640983e-02 )
(8.709677419354838745e-01 , 6.140226596995664032e-02 )
(9.032258064516128782e-01 , 8.944671655370965702e-02 )
(9.354838709677418818e-01 , 1.198482834655417717e-01 )
(9.677419354838709964e-01 , 1.520739246553469148e-01 )
(1.000000000000000000e+00 , 1.857204478341402121e-01 )
};
\end{axis}
\end{tikzpicture}
}%
\subfloat[Nonlocal model]{
\begin{tikzpicture}
\begin{axis}[mark size=1.7pt]
\addplot+[gray, thick, dashed, mark=none] coordinates{
(0.000000000000000000e+00 , 5.969195893703570510e-01)
(6.666666666666666574e-02 , 7.760036392076995471e-01)
(1.333333333333333315e-01 , 9.130686369548897119e-01)
(2.000000000000000111e-01 , 9.872476792022164549e-01)
(2.666666666666666630e-01 , 9.872476792022162329e-01)
(3.333333333333333148e-01 , 9.130686369548897119e-01)
(4.000000000000000222e-01 , 7.760036392076994360e-01)
(4.666666666666666741e-01 , 5.969195893703576061e-01)
(5.333333333333333259e-01 , 4.030804106296423939e-01)
(5.999999999999999778e-01 , 2.239963607923014521e-01)
(6.666666666666666297e-01 , 8.693136304511028811e-02)
(7.333333333333332815e-01 , 1.275232079778287897e-02)
(8.000000000000000444e-01 , 1.275232079778465533e-02)
(8.666666666666666963e-01 , 8.693136304511028811e-02)
(9.333333333333333481e-01 , 2.239963607922996758e-01)
(1.000000000000000000e+00 , 4.030804106296423939e-01)
};
\addplot[mark=*, myred, thick] coordinates{
(0.000000000000000000e+00 , 2.430624738089741110e-01 )
(3.225806451612903136e-02 , 2.798558576004241405e-01 )
(6.451612903225806273e-02 , 3.170309965090217785e-01 )
(9.677419354838709409e-02 , 3.544660308005898597e-01 )
(1.290322580645161255e-01 , 3.920526511614205734e-01 )
(1.612903225806451568e-01 , 4.296728724978709502e-01 )
(1.935483870967741882e-01 , 4.671797423773145219e-01 )
(2.258064516129032195e-01 , 5.044723753991082749e-01 )
(2.580645161290322509e-01 , 5.414426275060750804e-01 )
(2.903225806451612545e-01 , 5.779776397019382728e-01 )
(3.225806451612903136e-01 , 6.139484264330227203e-01 )
(3.548387096774193727e-01 , 6.492066659541855245e-01 )
(3.870967741935483764e-01 , 6.835727181948154652e-01 )
(4.193548387096773800e-01 , 7.168166644094355888e-01 )
(4.516129032258064391e-01 , 7.486460691150732538e-01 )
(4.838709677419354982e-01 , 7.786716902765087323e-01 )
(5.161290322580645018e-01 , 8.063329478046330490e-01 )
(5.483870967741935054e-01 , 8.307674291459218319e-01 )
(5.806451612903225090e-01 , 8.504162981855258874e-01 )
(6.129032258064516236e-01 , 8.621692352508099066e-01 )
(6.451612903225806273e-01 , 8.626790383351845559e-01 )
(6.774193548387096309e-01 , 8.450770593414986820e-01 )
(7.096774193548387455e-01 , 7.808537051774704363e-01 )
(7.419354838709677491e-01 , 6.091009959416920916e-01 )
(7.741935483870967527e-01 , 3.252010343346709575e-01 )
(8.064516129032257563e-01 , 1.425531712704898590e-01 )
(8.387096774193547599e-01 , 8.410206732756221604e-02 )
(8.709677419354838745e-01 , 8.251627362716595915e-02 )
(9.032258064516128782e-01 , 1.051203183823645748e-01 )
(9.354838709677418818e-01 , 1.369209008351686685e-01 )
(9.677419354838709964e-01 , 1.713058447994429412e-01 )
(1.000000000000000000e+00 , 2.068081784946200763e-01 )
};
\end{axis}
\end{tikzpicture}
}%
\caption{Experiment 2. Second-order numerical approximations of the local conservation law and the nonlocal model ($\delta = 0.125$, $p=-0.5$) for $\Dx=1/32$ and $\lambda =0.8$.}
\label{fig: comparison local and nonlocal shock}
\end{figure}
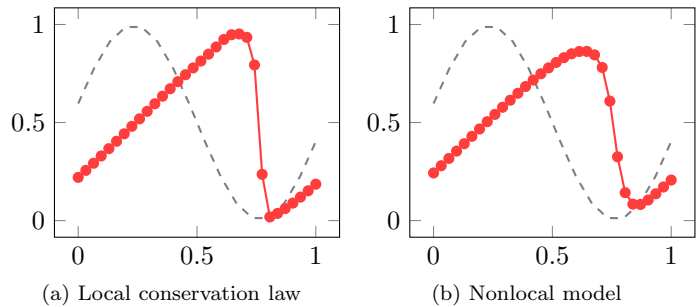
\begin{table}[t]
\centering
\subfloat[Local conservation law]{
\begin{tabular}{@{}rcc@{}}
  \toprule
  \multicolumn{1}{c}{$n$} & $\Lone$ error & $\Lone$ OOC\\
  \midrule
	 $  8$&  $\num{7.553e-02}$ & -- \\
	 $ 16$&  $\num{3.484e-02}$ & $1.12$ \\
	 $ 32$&  $\num{1.645e-02}$ & $1.08$ \\
	 $ 64$&  $\num{7.651e-03}$ & $1.10$ \\
	 $128$&  $\num{3.416e-03}$ & $1.16$ \\
	 $256$&  $\num{1.415e-03}$ & $1.27$ \\
	 $512$&  $\num{4.638e-04}$ & $1.61$ \\
  \bottomrule
\end{tabular}
\label{tbl: local}
}%
\subfloat[Nonlocal model]{
\centering
\begin{tabular}{@{}rcc@{}}
  \toprule
  \multicolumn{1}{c}{$n$} & $\Lone$ error & $\Lone$ OOC\\
  \midrule
	$  8$ &  $\num{3.904e-02}$ & -- \\
	$ 16$ &  $\num{9.936e-03}$ & $1.97$ \\
	$ 32$ &  $\num{2.784e-03}$ & $1.84$ \\
	$ 64$ &  $\num{6.115e-04}$ & $2.19$ \\
	$128$ &  $\num{1.208e-04}$ & $2.34$ \\
	$256$ &  $\num{2.295e-05}$ & $2.40$ \\
	$512$ &  $\num{3.772e-06}$ & $2.61$ \\
  \bottomrule
\end{tabular}
\label{tbl: nonlocal}
}%
\caption{Experiment 2. $\Lone$ errors and observed order of convergence for the local conservation law and the nonlocal model ($\delta = 0.125$, $p=-0.5$).}
\label{tbl: Errors and convergence rates local/nonlocal comparison}
\end{table}
\Cref{fig: comparison local and nonlocal shock} shows the numerical approximations computed with the second-order scheme \eqref{num scheme} for the local ($\delta=0$) conservation law (left) and the nonlocal model (right), with $\delta = 0.125$ and $p=-0.5$ in the latter case, while \Cref{tbl: Errors and convergence rates local/nonlocal comparison} shows the $\Lone$ error against an approximation on a very fine grid ($n=1024$) and the observed order of convergence.

The solution of the local conservation law has a shock moving to the right which will cause the second-order scheme to lose its second-order convergence rate. On the other hand, \Cref{thm: stationarity of discontinuities of weak solutions} shows that the nonlocal model cannot exhibit non-stationary shocks, so the second-order method will retain its second-order convergence rate in this case. We can clearly see this when comparing the experimental convergence rates of \Cref{tbl: local} to those of \Cref{tbl: nonlocal}.



\subsection{Experiment 3: Comparison to the local conservation law -- shocks}
We want to illustrate the theoretical findings of \Cref{sec: Regularity} further with a third experiment.
To that end, we consider the same parameters as before, except using the initial datum
\begin{equation*}
	u_0^2(x) = -\sin(\pi x)
\end{equation*}
on the interval $[-1,1]$. \Cref{fig: stationary shock} shows numerical solutions computed with the second-order scheme~\eqref{num scheme} for the local conservation law (blue) and for the nonlocal model (red) at $T=0.5$.  We observe a stationary shock at the origin both in the local and the nonlocal model.
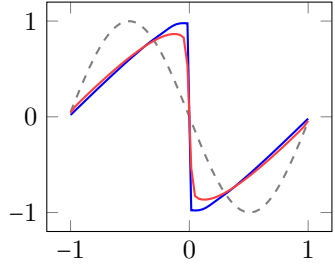
\begin{figure}[t]
\centering
\begin{tikzpicture}
\begin{axis}[mark size=1.7pt]
\addplot+[gray, thick, dashed, mark=none] coordinates{
(-1.000000000000000000e+00, 4.904797135733937608e-02)
(-9.682539682539682557e-01, 1.466715552954981905e-01)
(-9.365079365079365115e-01, 2.428826119773334347e-01)
(-9.047619047619047672e-01, 3.367545763326854069e-01)
(-8.730158730158730229e-01, 4.273834100291765692e-01)
(-8.412698412698412787e-01, 5.138963078555496278e-01)
(-8.095238095238095344e-01, 5.954601033129761634e-01)
(-7.777777777777777901e-01, 6.712892924638949044e-01)
(-7.460317460317460458e-01, 7.406535987643311225e-01)
(-7.142857142857143016e-01, 8.028850060262248123e-01)
(-6.825396825396825573e-01, 8.573841917784963407e-01)
(-6.507936507936508130e-01, 9.036262990700658637e-01)
(-6.190476190476190688e-01, 9.411659911292082903e-01)
(-5.873015873015873245e-01, 9.696417402001109576e-01)
(-5.555555555555555802e-01, 9.887793092527931149e-01)
(-5.238095238095238360e-01, 9.983943930356182772e-01)
(-4.920634920634920917e-01, 9.983943930356182772e-01)
(-4.603174603174603474e-01, 9.887793092527931149e-01)
(-4.285714285714286031e-01, 9.696417402001109576e-01)
(-3.968253968253968589e-01, 9.411659911292082903e-01)
(-3.650793650793651146e-01, 9.036262990700658637e-01)
(-3.333333333333333703e-01, 8.573841917784963407e-01)
(-3.015873015873016261e-01, 8.028850060262247013e-01)
(-2.698412698412698818e-01, 7.406535987643307895e-01)
(-2.380952380952381375e-01, 6.712892924638946823e-01)
(-2.063492063492063933e-01, 5.954601033129761634e-01)
(-1.746031746031746490e-01, 5.138963078555496278e-01)
(-1.428571428571429047e-01, 4.273834100291765137e-01)
(-1.111111111111111605e-01, 3.367545763326852404e-01)
(-7.936507936507941618e-02, 2.428826119773333236e-01)
(-4.761904761904767192e-02, 1.466715552954981072e-01)
(-1.587301587301592765e-02, 4.904797135733926505e-02)
(1.587301587301581662e-02 , -4.904797135733926505e-02 )
(4.761904761904744987e-02 , -1.466715552954981072e-01 )
(7.936507936507930516e-02 , -2.428826119773333236e-01 )
(1.111111111111111605e-01 , -3.367545763326852404e-01 )
(1.428571428571427937e-01 , -4.273834100291765137e-01 )
(1.746031746031744269e-01 , -5.138963078555496278e-01 )
(2.063492063492062822e-01 , -5.954601033129761634e-01 )
(2.380952380952381375e-01 , -6.712892924638946823e-01 )
(2.698412698412697708e-01 , -7.406535987643307895e-01 )
(3.015873015873014040e-01 , -8.028850060262247013e-01 )
(3.333333333333332593e-01 , -8.573841917784963407e-01 )
(3.650793650793651146e-01 , -9.036262990700658637e-01 )
(3.968253968253967479e-01 , -9.411659911292082903e-01 )
(4.285714285714283811e-01 , -9.696417402001109576e-01 )
(4.603174603174602364e-01 , -9.887793092527931149e-01 )
(4.920634920634920917e-01 , -9.983943930356182772e-01 )
(5.238095238095237249e-01 , -9.983943930356182772e-01 )
(5.555555555555553582e-01 , -9.887793092527931149e-01 )
(5.873015873015872135e-01 , -9.696417402001109576e-01 )
(6.190476190476190688e-01 , -9.411659911292082903e-01 )
(6.507936507936507020e-01 , -9.036262990700658637e-01 )
(6.825396825396823353e-01 , -8.573841917784963407e-01 )
(7.142857142857141906e-01 , -8.028850060262248123e-01 )
(7.460317460317460458e-01 , -7.406535987643311225e-01 )
(7.777777777777776791e-01 , -6.712892924638949044e-01 )
(8.095238095238093123e-01 , -5.954601033129761634e-01 )
(8.412698412698411676e-01 , -5.138963078555496278e-01 )
(8.730158730158730229e-01 , -4.273834100291765692e-01 )
(9.047619047619046562e-01 , -3.367545763326854069e-01 )
(9.365079365079362894e-01 , -2.428826119773334347e-01 )
(9.682539682539681447e-01 , -1.466715552954981905e-01 )
(1.000000000000000000e+00 , -4.904797135733937608e-02 )
};
\addplot[blue, thick]
coordinates{%
(-1.000000000000000000e+00, 1.910480126056576106e-02 )
(-9.682539682539682557e-01, 5.730358861740825205e-02 )
(-9.365079365079365115e-01, 9.546957613183834623e-02 )
(-9.047619047619047672e-01, 1.335831236542430189e-01 )
(-8.730158730158730229e-01, 1.716068301135603158e-01 )
(-8.412698412698412787e-01, 2.095830487298410150e-01 )
(-8.095238095238095344e-01, 2.473038835254231238e-01 )
(-7.777777777777777901e-01, 2.851176142167071159e-01 )
(-7.460317460317460458e-01, 3.224758267350329444e-01 )
(-7.142857142857143016e-01, 3.596990576250562732e-01 )
(-6.825396825396825573e-01, 3.970177006920940777e-01 )
(-6.507936507936508130e-01, 4.337251882140734649e-01 )
(-6.190476190476190688e-01, 4.699354269344788726e-01 )
(-5.873015873015873245e-01, 5.061449726060960330e-01 )
(-5.555555555555555802e-01, 5.424180366864739256e-01 )
(-5.238095238095238360e-01, 5.777994332943252598e-01 )
(-4.920634920634920917e-01, 6.121917640704775954e-01 )
(-4.603174603174603474e-01, 6.461947354411241573e-01 )
(-4.285714285714286031e-01, 6.802408071227761255e-01 )
(-3.968253968253968589e-01, 7.143203152923122135e-01 )
(-3.650793650793651146e-01, 7.472198054354924901e-01 )
(-3.333333333333333703e-01, 7.781846581526452589e-01 )
(-3.015873015873016261e-01, 8.075002689562109648e-01 )
(-2.698412698412698818e-01, 8.360563911940114279e-01 )
(-2.380952380952381375e-01, 8.645452503912035569e-01 )
(-2.063492063492063933e-01, 8.933412462389223041e-01 )
(-1.746031746031746490e-01, 9.221868261608534567e-01 )
(-1.428571428571429047e-01, 9.476895335300393741e-01 )
(-1.111111111111111605e-01, 9.663396341032342018e-01 )
(-7.936507936507941618e-02, 9.759979482711867993e-01 )
(-4.761904761904767192e-02, 9.781913459353225315e-01 )
(-1.587301587301592765e-02, 9.766284592309375512e-01 )
(1.587301587301581662e-02 , -9.766284592309375512e-01 )
(4.761904761904744987e-02 , -9.781913459353225315e-01 )
(7.936507936507930516e-02 , -9.759979482711867993e-01 )
(1.111111111111111605e-01 , -9.663396341032342018e-01 )
(1.428571428571427937e-01 , -9.476895335300393741e-01 )
(1.746031746031744269e-01 , -9.221868261608534567e-01 )
(2.063492063492062822e-01 , -8.933412462389223041e-01 )
(2.380952380952381375e-01 , -8.645452503912035569e-01 )
(2.698412698412697708e-01 , -8.360563911940114279e-01 )
(3.015873015873014040e-01 , -8.075002689562109648e-01 )
(3.333333333333332593e-01 , -7.781846581526452589e-01 )
(3.650793650793651146e-01 , -7.472198054354924901e-01 )
(3.968253968253967479e-01 , -7.143203152923122135e-01 )
(4.285714285714283811e-01 , -6.802408071227761255e-01 )
(4.603174603174602364e-01 , -6.461947354411241573e-01 )
(4.920634920634920917e-01 , -6.121917640704775954e-01 )
(5.238095238095237249e-01 , -5.777994332943252598e-01 )
(5.555555555555553582e-01 , -5.424180366864739256e-01 )
(5.873015873015872135e-01 , -5.061449726060960330e-01 )
(6.190476190476190688e-01 , -4.699354269344788726e-01 )
(6.507936507936507020e-01 , -4.337251882140734649e-01 )
(6.825396825396823353e-01 , -3.970177006920940777e-01 )
(7.142857142857141906e-01 , -3.596990576250562732e-01 )
(7.460317460317460458e-01 , -3.224758267350329444e-01 )
(7.777777777777776791e-01 , -2.851176142167071159e-01 )
(8.095238095238093123e-01 , -2.473038835254231238e-01 )
(8.412698412698411676e-01 , -2.095830487298410150e-01 )
(8.730158730158730229e-01 , -1.716068301135603158e-01 )
(9.047619047619046562e-01 , -1.335831236542430189e-01 )
(9.365079365079362894e-01 , -9.546957613183834623e-02 )
(9.682539682539681447e-01 , -5.730358861740825205e-02 )
(1.000000000000000000e+00 , -1.910480126056576106e-02 )
};
\addplot[myred, thick] coordinates{
(-1.000000000000000000e+00, 4.098708363515574798e-02 )
(-9.682539682539682557e-01, 9.693834320311642272e-02 )
(-9.365079365079365115e-01, 1.391405445274903896e-01 )
(-9.047619047619047672e-01, 1.773303987703331863e-01 )
(-8.730158730158730229e-01, 2.152556476814951880e-01 )
(-8.412698412698412787e-01, 2.530058410455532925e-01 )
(-8.095238095238095344e-01, 2.900389627569577300e-01 )
(-7.777777777777777901e-01, 3.265543297104532838e-01 )
(-7.460317460317460458e-01, 3.626564436173872474e-01 )
(-7.142857142857143016e-01, 3.982927481635963884e-01 )
(-6.825396825396825573e-01, 4.334197607184875345e-01 )
(-6.507936507936508130e-01, 4.680142262042575330e-01 )
(-6.190476190476190688e-01, 5.020325406272567736e-01 )
(-5.873015873015873245e-01, 5.354174745245114941e-01 )
(-5.555555555555555802e-01, 5.681058211607734609e-01 )
(-5.238095238095238360e-01, 6.000246893984249041e-01 )
(-4.920634920634920917e-01, 6.310879403115613462e-01 )
(-4.603174603174603474e-01, 6.611950354403213215e-01 )
(-4.285714285714286031e-01, 6.902280097255346636e-01 )
(-3.968253968253968589e-01, 7.180465577969383784e-01 )
(-3.650793650793651146e-01, 7.444821671718426792e-01 )
(-3.333333333333333703e-01, 7.693299472549524065e-01 )
(-3.015873015873016261e-01, 7.923366743457256334e-01 )
(-2.698412698412698818e-01, 8.131839866615258039e-01 )
(-2.380952380952381375e-01, 8.314625873978167547e-01 )
(-2.063492063492063933e-01, 8.466333246944959257e-01 )
(-1.746031746031746490e-01, 8.579629300051840879e-01 )
(-1.428571428571429047e-01, 8.644169899832989667e-01 )
(-1.111111111111111605e-01, 8.645365265011860645e-01 )
(-7.936507936507941618e-02, 8.559776459397934190e-01 )
(-4.761904761904767192e-02, 8.270538694274857106e-01 )
(-1.587301587301592765e-02, 5.350014980733557302e-01 )
(1.587301587301581662e-02 , -5.350014980733557302e-01 )
(4.761904761904744987e-02 , -8.270538694274857106e-01 )
(7.936507936507930516e-02 , -8.559776459397934190e-01 )
(1.111111111111111605e-01 , -8.645365265011860645e-01 )
(1.428571428571427937e-01 , -8.644169899832989667e-01 )
(1.746031746031744269e-01 , -8.579629300051840879e-01 )
(2.063492063492062822e-01 , -8.466333246944959257e-01 )
(2.380952380952381375e-01 , -8.314625873978167547e-01 )
(2.698412698412697708e-01 , -8.131839866615258039e-01 )
(3.015873015873014040e-01 , -7.923366743457256334e-01 )
(3.333333333333332593e-01 , -7.693299472549524065e-01 )
(3.650793650793651146e-01 , -7.444821671718426792e-01 )
(3.968253968253967479e-01 , -7.180465577969383784e-01 )
(4.285714285714283811e-01 , -6.902280097255346636e-01 )
(4.603174603174602364e-01 , -6.611950354403213215e-01 )
(4.920634920634920917e-01 , -6.310879403115613462e-01 )
(5.238095238095237249e-01 , -6.000246893984249041e-01 )
(5.555555555555553582e-01 , -5.681058211607734609e-01 )
(5.873015873015872135e-01 , -5.354174745245114941e-01 )
(6.190476190476190688e-01 , -5.020325406272567736e-01 )
(6.507936507936507020e-01 , -4.680142262042575330e-01 )
(6.825396825396823353e-01 , -4.334197607184875345e-01 )
(7.142857142857141906e-01 , -3.982927481635963884e-01 )
(7.460317460317460458e-01 , -3.626564436173872474e-01 )
(7.777777777777776791e-01 , -3.265543297104532838e-01 )
(8.095238095238093123e-01 , -2.900389627569577300e-01 )
(8.412698412698411676e-01 , -2.530058410455532925e-01 )
(8.730158730158730229e-01 , -2.152556476814951880e-01 )
(9.047619047619046562e-01 , -1.773303987703331863e-01 )
(9.365079365079362894e-01 , -1.391405445274903896e-01 )
(9.682539682539681447e-01 , -9.693834320311642272e-02 )
(1.000000000000000000e+00 , -4.098708363515574798e-02 )
};
\end{axis}
\end{tikzpicture}
\caption{Experiment 3. Second-order numerical approximations for the local conservation law and the nonlocal model (in blue and red, respectively) with $\Dx=1/32$, $\lambda = 0.25$, $\delta=0.125$, $p=1$, and $u(x,0)=u_0^2(x)$.}
\label{fig: stationary shock}
\end{figure}
Secondly, we consider the initial datum
\begin{equation*}
	u_0^3(x) = 1+u_0^2(x) = 1-\sin(\pi x)
\end{equation*}
again on the interval $[-1,1]$. In the local conservation law this initial datum leads to a non-stationary shock. However, because of \Cref{thm: stationarity of discontinuities of weak solutions} we know that the corresponding solution of the nonlocal model cannot display non-stationary shocks. \Cref{fig: moving shock} clearly shows that the solution of the nonlocal model (red) is smooth where the solution of the local conservation law (blue) has a shock.
\begin{figure}[t]
\centering
\subfloat[Shock/smoothed `shock'][Shock/smoothed `shock'\\\phantom{(a) }($T=0.5$)]{
\begin{tikzpicture}
\begin{axis}[mark size=1.7pt]
\addplot+[gray, thick, dashed, mark=none] coordinates{
(-1.000000000000000000e+00, 1.049047971357339515e+00 )
(-9.682539682539682557e-01, 1.146671555295498024e+00 )
(-9.365079365079365115e-01, 1.242882611977333296e+00 )
(-9.047619047619047672e-01, 1.336754576332685351e+00 )
(-8.730158730158730229e-01, 1.427383410029176458e+00 )
(-8.412698412698412787e-01, 1.513896307855549628e+00 )
(-8.095238095238095344e-01, 1.595460103312976052e+00 )
(-7.777777777777777901e-01, 1.671289292463894682e+00 )
(-7.460317460317460458e-01, 1.740653598764331011e+00 )
(-7.142857142857143016e-01, 1.802885006026224923e+00 )
(-6.825396825396825573e-01, 1.857384191778496341e+00 )
(-6.507936507936508130e-01, 1.903626299070065642e+00 )
(-6.190476190476190688e-01, 1.941165991129208290e+00 )
(-5.873015873015873245e-01, 1.969641740200111180e+00 )
(-5.555555555555555802e-01, 1.988779309252793670e+00 )
(-5.238095238095238360e-01, 1.998394393035618499e+00 )
(-4.920634920634920917e-01, 1.998394393035618499e+00 )
(-4.603174603174603474e-01, 1.988779309252793670e+00 )
(-4.285714285714286031e-01, 1.969641740200111180e+00 )
(-3.968253968253968589e-01, 1.941165991129208068e+00 )
(-3.650793650793651146e-01, 1.903626299070065642e+00 )
(-3.333333333333333703e-01, 1.857384191778496341e+00 )
(-3.015873015873016261e-01, 1.802885006026224701e+00 )
(-2.698412698412698818e-01, 1.740653598764331011e+00 )
(-2.380952380952381375e-01, 1.671289292463894682e+00 )
(-2.063492063492063933e-01, 1.595460103312976052e+00 )
(-1.746031746031746490e-01, 1.513896307855549406e+00 )
(-1.428571428571429047e-01, 1.427383410029176680e+00 )
(-1.111111111111111605e-01, 1.336754576332685351e+00 )
(-7.936507936507941618e-02, 1.242882611977333296e+00 )
(-4.761904761904767192e-02, 1.146671555295498024e+00 )
(-1.587301587301592765e-02, 1.049047971357339293e+00 )
(1.587301587301581662e-02 , 9.509520286426607072e-01 )
(4.761904761904744987e-02 , 8.533284447045019760e-01 )
(7.936507936507930516e-02 , 7.571173880226665931e-01 )
(1.111111111111111605e-01 , 6.632454236673146486e-01 )
(1.428571428571427937e-01 , 5.726165899708235418e-01 )
(1.746031746031744269e-01 , 4.861036921444504277e-01 )
(2.063492063492062822e-01 , 4.045398966870239477e-01 )
(2.380952380952381375e-01 , 3.287107075361052622e-01 )
(2.698412698412697708e-01 , 2.593464012356689885e-01 )
(3.015873015873014040e-01 , 1.971149939737752432e-01 )
(3.333333333333332593e-01 , 1.426158082215037148e-01 )
(3.650793650793651146e-01 , 9.637370092993424731e-02 )
(3.968253968253967479e-01 , 5.883400887079182073e-02 )
(4.285714285714283811e-01 , 3.035825979988916040e-02 )
(4.603174603174602364e-01 , 1.122069074720683304e-02 )
(4.920634920634920917e-01 , 1.605606964381636487e-03 )
(5.238095238095237249e-01 , 1.605606964381616104e-03 )
(5.555555555555553582e-01 , 1.122069074720681396e-02 )
(5.873015873015872135e-01 , 3.035825979988911530e-02 )
(6.190476190476190688e-01 , 5.883400887079179298e-02 )
(6.507936507936507020e-01 , 9.637370092993420567e-02 )
(6.825396825396823353e-01 , 1.426158082215036316e-01 )
(7.142857142857141906e-01 , 1.971149939737751322e-01 )
(7.460317460317460458e-01 , 2.593464012356688775e-01 )
(7.777777777777776791e-01 , 3.287107075361051511e-01 )
(8.095238095238093123e-01 , 4.045398966870238366e-01 )
(8.412698412698411676e-01 , 4.861036921444503722e-01 )
(8.730158730158730229e-01 , 5.726165899708235418e-01 )
(9.047619047619046562e-01 , 6.632454236673145376e-01 )
(9.365079365079362894e-01 , 7.571173880226665931e-01 )
(9.682539682539681447e-01 , 8.533284447045017540e-01 )
(1.000000000000000000e+00 , 9.509520286426607072e-01 )
};
\addplot[blue, thick] coordinates{
(-1.000000000000000000e+00, 4.229570752315855442e-01 )
(-9.682539682539682557e-01, 4.582734866797598117e-01 )
(-9.365079365079365115e-01, 4.940086852120489924e-01 )
(-9.047619047619047672e-01, 5.301178918393130157e-01 )
(-8.730158730158730229e-01, 5.665605620918612306e-01 )
(-8.412698412698412787e-01, 6.032997196889977598e-01 )
(-8.095238095238095344e-01, 6.403014254543313299e-01 )
(-7.777777777777777901e-01, 6.775343669829447801e-01 )
(-7.460317460317460458e-01, 7.149695697270108674e-01 )
(-7.142857142857143016e-01, 7.525802525784237584e-01 )
(-6.825396825396825573e-01, 7.903418450211161517e-01 )
(-6.507936507936508130e-01, 8.282320684966588686e-01 )
(-6.190476190476190688e-01, 8.662308046024258523e-01 )
(-5.873015873015873245e-01, 9.043195397851691020e-01 )
(-5.555555555555555802e-01, 9.424808703433791290e-01 )
(-5.238095238095238360e-01, 9.806998321623815462e-01 )
(-4.920634920634920917e-01, 1.018969867637711690e+00 )
(-4.603174603174603474e-01, 1.057306466667869094e+00 )
(-4.285714285714286031e-01, 1.095718180467338376e+00 )
(-3.968253968253968589e-01, 1.133924212967731826e+00 )
(-3.650793650793651146e-01, 1.171842655639324660e+00 )
(-3.333333333333333703e-01, 1.209612645073592896e+00 )
(-3.015873015873016261e-01, 1.247360784770531694e+00 )
(-2.698412698412698818e-01, 1.285152725884132874e+00 )
(-2.380952380952381375e-01, 1.323039161089936577e+00 )
(-2.063492063492063933e-01, 1.360560086136771041e+00 )
(-1.746031746031746490e-01, 1.397421666957434638e+00 )
(-1.428571428571429047e-01, 1.433737228403053843e+00 )
(-1.111111111111111605e-01, 1.469774814906254168e+00 )
(-7.936507936507941618e-02, 1.505759467445316080e+00 )
(-4.761904761904767192e-02, 1.541794011666618225e+00 )
(-1.587301587301592765e-02, 1.578012394437699584e+00 )
(1.587301587301581662e-02 , 1.613900816078657430e+00 )
(4.761904761904744987e-02 , 1.648504674247748714e+00 )
(7.936507936507930516e-02 , 1.681612230425295618e+00 )
(1.111111111111111605e-01 , 1.713444548916521537e+00 )
(1.428571428571427937e-01 , 1.744510768572422821e+00 )
(1.746031746031744269e-01 , 1.775312875911324806e+00 )
(2.063492063492062822e-01 , 1.806121510369664396e+00 )
(2.380952380952381375e-01 , 1.837116483305135439e+00 )
(2.698412698412697708e-01 , 1.868676048946453561e+00 )
(3.015873015873014040e-01 , 1.900927058553014026e+00 )
(3.333333333333332593e-01 , 1.929759433082039788e+00 )
(3.650793650793651146e-01 , 1.950621613257762954e+00 )
(3.968253968253967479e-01 , 1.959837016288036970e+00 )
(4.285714285714283811e-01 , 1.958625705261013206e+00 )
(4.603174603174602364e-01 , 1.928067967775561442e+00 )
(4.920634920634920917e-01 , 1.626677470761314748e+00 )
(5.238095238095237249e-01 , 4.086760995160447774e-01 )
(5.555555555555553582e-01 , 1.248004090375605737e-02 )
(5.873015873015872135e-01 , 2.271109233082450538e-02 )
(6.190476190476190688e-01 , 3.999808720544177632e-02 )
(6.507936507936507020e-01 , 6.039871181566273950e-02 )
(6.825396825396823353e-01 , 8.331112275789992472e-02 )
(7.142857142857141906e-01 , 1.082915912433426858e-01 )
(7.460317460317460458e-01 , 1.350018025338567684e-01 )
(7.777777777777776791e-01 , 1.631803622501896145e-01 )
(8.095238095238093123e-01 , 1.926202721258742923e-01 )
(8.412698412698411676e-01 , 2.231541628233295604e-01 )
(8.730158730158730229e-01 , 2.546443111174374607e-01 )
(9.047619047619046562e-01 , 2.869756498352594720e-01 )
(9.365079365079362894e-01 , 3.200507402612196017e-01 )
(9.682539682539681447e-01 , 3.537860730947780530e-01 )
(1.000000000000000000e+00 , 3.881092923843938447e-01 )
};
\addplot[myred, thick] coordinates{
(-1.000000000000000000e+00, 4.693520067987828059e-01 )
(-9.682539682539682557e-01, 5.057394764578029278e-01 )
(-9.365079365079365115e-01, 5.423648579219573218e-01 )
(-9.047619047619047672e-01, 5.791905426571388738e-01 )
(-8.730158730158730229e-01, 6.161818566761171345e-01 )
(-8.412698412698412787e-01, 6.533064523741614060e-01 )
(-8.095238095238095344e-01, 6.905338388944419759e-01 )
(-7.777777777777777901e-01, 7.278350002149157616e-01 )
(-7.460317460317460458e-01, 7.651820423920485670e-01 )
(-7.142857142857143016e-01, 8.025478691132170717e-01 )
(-6.825396825396825573e-01, 8.399056365357613352e-01 )
(-6.507936507936508130e-01, 8.772291257170142798e-01 )
(-6.190476190476190688e-01, 9.144921796398363112e-01 )
(-5.873015873015873245e-01, 9.516683826568561599e-01 )
(-5.555555555555555802e-01, 9.887309078405355223e-01 )
(-5.238095238095238360e-01, 1.025652249712955655e+00 )
(-4.920634920634920917e-01, 1.062403952396466567e+00 )
(-4.603174603174603474e-01, 1.098956333627931992e+00 )
(-4.285714285714286031e-01, 1.135278172928963603e+00 )
(-3.968253968253968589e-01, 1.171336359300599650e+00 )
(-3.650793650793651146e-01, 1.207095492651293123e+00 )
(-3.333333333333333703e-01, 1.242517420650120386e+00 )
(-3.015873015873016261e-01, 1.277560695487697240e+00 )
(-2.698412698412698818e-01, 1.312179923132915205e+00 )
(-2.380952380952381375e-01, 1.346324981975998369e+00 )
(-2.063492063492063933e-01, 1.379940068337526693e+00 )
(-1.746031746031746490e-01, 1.412962509092855257e+00 )
(-1.428571428571429047e-01, 1.445321283384755962e+00 )
(-1.111111111111111605e-01, 1.476935140043013739e+00 )
(-7.936507936507941618e-02, 1.507710154525860657e+00 )
(-4.761904761904767192e-02, 1.537536584024647768e+00 )
(-1.587301587301592765e-02, 1.566284613673671000e+00 )
(1.587301587301581662e-02 , 1.593798618291138336e+00 )
(4.761904761904744987e-02 , 1.619889267576121838e+00 )
(7.936507936507930516e-02 , 1.644322329898152635e+00 )
(1.111111111111111605e-01 , 1.666802381388507559e+00 )
(1.428571428571427937e-01 , 1.686948677100014615e+00 )
(1.746031746031744269e-01 , 1.704258065523477583e+00 )
(2.063492063492062822e-01 , 1.718045904786229761e+00 )
(2.380952380952381375e-01 , 1.727349642095845628e+00 )
(2.698412698412697708e-01 , 1.730789421613658785e+00 )
(3.015873015873014040e-01 , 1.726372368980764627e+00 )
(3.333333333333332593e-01 , 1.710726632934388647e+00 )
(3.650793650793651146e-01 , 1.678584364878217494e+00 )
(3.968253968253967479e-01 , 1.621382395285567490e+00 )
(4.285714285714283811e-01 , 1.525765902414248920e+00 )
(4.603174603174602364e-01 , 1.373938264245497587e+00 )
(4.920634920634920917e-01 , 1.151395099538155664e+00 )
(5.238095238095237249e-01 , 8.671993856443034332e-01 )
(5.555555555555553582e-01 , 5.713667416454890624e-01 )
(5.873015873015872135e-01 , 3.320652894136856825e-01 )
(6.190476190476190688e-01 , 1.863119340884708819e-01 )
(6.507936507936507020e-01 , 1.254457730825036255e-01 )
(6.825396825396823353e-01 , 1.180192569448568207e-01 )
(7.142857142857141906e-01 , 1.357217182097513741e-01 )
(7.460317460317460458e-01 , 1.629052538673207706e-01 )
(7.777777777777776791e-01 , 1.934875766058681834e-01 )
(8.095238095238093123e-01 , 2.256317748989613570e-01 )
(8.412698412698411676e-01 , 2.587583292215934749e-01 )
(8.730158730158730229e-01 , 2.926208021756954714e-01 )
(9.047619047619046562e-01 , 3.270831797717659306e-01 )
(9.365079365079362894e-01 , 3.620535415904750365e-01 )
(9.682539682539681447e-01 , 3.974603000234948613e-01 )
(1.000000000000000000e+00 , 4.332436954279201280e-01 )
};
\end{axis}
\end{tikzpicture}
\label{fig: moving shock a}
}%
\subfloat[Shock/smoothed `shock'][Shock/smoothed `shock'\\\phantom{(b) }($T=1.5$)]{
\begin{tikzpicture}
\begin{axis}[mark size=1.7pt]
\addplot+[gray, thick, dashed, mark=none] coordinates{
(-1.000000000000000000e+00, 1.049047971357339515e+00 )
(-9.682539682539682557e-01, 1.146671555295498024e+00 )
(-9.365079365079365115e-01, 1.242882611977333296e+00 )
(-9.047619047619047672e-01, 1.336754576332685351e+00 )
(-8.730158730158730229e-01, 1.427383410029176458e+00 )
(-8.412698412698412787e-01, 1.513896307855549628e+00 )
(-8.095238095238095344e-01, 1.595460103312976052e+00 )
(-7.777777777777777901e-01, 1.671289292463894682e+00 )
(-7.460317460317460458e-01, 1.740653598764331011e+00 )
(-7.142857142857143016e-01, 1.802885006026224923e+00 )
(-6.825396825396825573e-01, 1.857384191778496341e+00 )
(-6.507936507936508130e-01, 1.903626299070065642e+00 )
(-6.190476190476190688e-01, 1.941165991129208290e+00 )
(-5.873015873015873245e-01, 1.969641740200111180e+00 )
(-5.555555555555555802e-01, 1.988779309252793670e+00 )
(-5.238095238095238360e-01, 1.998394393035618499e+00 )
(-4.920634920634920917e-01, 1.998394393035618499e+00 )
(-4.603174603174603474e-01, 1.988779309252793670e+00 )
(-4.285714285714286031e-01, 1.969641740200111180e+00 )
(-3.968253968253968589e-01, 1.941165991129208068e+00 )
(-3.650793650793651146e-01, 1.903626299070065642e+00 )
(-3.333333333333333703e-01, 1.857384191778496341e+00 )
(-3.015873015873016261e-01, 1.802885006026224701e+00 )
(-2.698412698412698818e-01, 1.740653598764331011e+00 )
(-2.380952380952381375e-01, 1.671289292463894682e+00 )
(-2.063492063492063933e-01, 1.595460103312976052e+00 )
(-1.746031746031746490e-01, 1.513896307855549406e+00 )
(-1.428571428571429047e-01, 1.427383410029176680e+00 )
(-1.111111111111111605e-01, 1.336754576332685351e+00 )
(-7.936507936507941618e-02, 1.242882611977333296e+00 )
(-4.761904761904767192e-02, 1.146671555295498024e+00 )
(-1.587301587301592765e-02, 1.049047971357339293e+00 )
(1.587301587301581662e-02 , 9.509520286426607072e-01 )
(4.761904761904744987e-02 , 8.533284447045019760e-01 )
(7.936507936507930516e-02 , 7.571173880226665931e-01 )
(1.111111111111111605e-01 , 6.632454236673146486e-01 )
(1.428571428571427937e-01 , 5.726165899708235418e-01 )
(1.746031746031744269e-01 , 4.861036921444504277e-01 )
(2.063492063492062822e-01 , 4.045398966870239477e-01 )
(2.380952380952381375e-01 , 3.287107075361052622e-01 )
(2.698412698412697708e-01 , 2.593464012356689885e-01 )
(3.015873015873014040e-01 , 1.971149939737752432e-01 )
(3.333333333333332593e-01 , 1.426158082215037148e-01 )
(3.650793650793651146e-01 , 9.637370092993424731e-02 )
(3.968253968253967479e-01 , 5.883400887079182073e-02 )
(4.285714285714283811e-01 , 3.035825979988916040e-02 )
(4.603174603174602364e-01 , 1.122069074720683304e-02 )
(4.920634920634920917e-01 , 1.605606964381636487e-03 )
(5.238095238095237249e-01 , 1.605606964381616104e-03 )
(5.555555555555553582e-01 , 1.122069074720681396e-02 )
(5.873015873015872135e-01 , 3.035825979988911530e-02 )
(6.190476190476190688e-01 , 5.883400887079179298e-02 )
(6.507936507936507020e-01 , 9.637370092993420567e-02 )
(6.825396825396823353e-01 , 1.426158082215036316e-01 )
(7.142857142857141906e-01 , 1.971149939737751322e-01 )
(7.460317460317460458e-01 , 2.593464012356688775e-01 )
(7.777777777777776791e-01 , 3.287107075361051511e-01 )
(8.095238095238093123e-01 , 4.045398966870238366e-01 )
(8.412698412698411676e-01 , 4.861036921444503722e-01 )
(8.730158730158730229e-01 , 5.726165899708235418e-01 )
(9.047619047619046562e-01 , 6.632454236673145376e-01 )
(9.365079365079362894e-01 , 7.571173880226665931e-01 )
(9.682539682539681447e-01 , 8.533284447045017540e-01 )
(1.000000000000000000e+00 , 9.509520286426607072e-01 )
};
\addplot[blue, thick] coordinates{
(-1.000000000000000000e+00, 1.283307274432809120e+00 )
(-9.682539682539682557e-01, 1.300401437483755585e+00 )
(-9.365079365079365115e-01, 1.317439362823610516e+00 )
(-9.047619047619047672e-01, 1.334422139850009348e+00 )
(-8.730158730158730229e-01, 1.351351754730386823e+00 )
(-8.412698412698412787e-01, 1.368234202318624249e+00 )
(-8.095238095238095344e-01, 1.385088535586390224e+00 )
(-7.777777777777777901e-01, 1.401938251823800385e+00 )
(-7.460317460317460458e-01, 1.418803828071715678e+00 )
(-7.142857142857143016e-01, 1.435781315348927833e+00 )
(-6.825396825396825573e-01, 1.453381347984028515e+00 )
(-6.507936507936508130e-01, 1.474308302841368157e+00 )
(-6.190476190476190688e-01, 1.497948473366636390e+00 )
(-5.873015873015873245e-01, 1.499965247789520451e+00 )
(-5.555555555555555802e-01, 1.465256785983745313e+00 )
(-5.238095238095238360e-01, 1.263821236621643918e+00 )
(-4.920634920634920917e-01, 7.454567758162652380e-01 )
(-4.603174603174603474e-01, 5.130384578190427813e-01 )
(-4.285714285714286031e-01, 4.984365523586700153e-01 )
(-3.968253968253968589e-01, 5.097578821509635150e-01 )
(-3.650793650793651146e-01, 5.289001718137958941e-01 )
(-3.333333333333333703e-01, 5.473198950628628268e-01 )
(-3.015873015873016261e-01, 5.643488117478097088e-01 )
(-2.698412698412698818e-01, 5.812349196652001337e-01 )
(-2.380952380952381375e-01, 5.981235308951202700e-01 )
(-2.063492063492063933e-01, 6.150361058321516250e-01 )
(-1.746031746031746490e-01, 6.319790861001449711e-01 )
(-1.428571428571429047e-01, 6.489481350997494680e-01 )
(-1.111111111111111605e-01, 6.659381364504824985e-01 )
(-7.936507936507941618e-02, 6.829468723527688523e-01 )
(-4.761904761904767192e-02, 6.999738229147658419e-01 )
(-1.587301587301592765e-02, 7.170184458527424365e-01 )
(1.587301587301581662e-02 , 7.340796887343541410e-01 )
(4.761904761904744987e-02 , 7.511562740741741795e-01 )
(7.936507936507930516e-02 , 7.682470090470534529e-01 )
(1.111111111111111605e-01 , 7.853508657252976644e-01 )
(1.428571428571427937e-01 , 8.024669250940101861e-01 )
(1.746031746031744269e-01 , 8.195943174237373396e-01 )
(2.063492063492062822e-01 , 8.367322067966065369e-01 )
(2.380952380952381375e-01 , 8.538798057436478572e-01 )
(2.698412698412697708e-01 , 8.710363960031546338e-01 )
(3.015873015873014040e-01 , 8.882013453201159647e-01 )
(3.333333333333332593e-01 , 9.053741222376870956e-01 )
(3.650793650793651146e-01 , 9.225543138552635414e-01 )
(3.968253968253967479e-01 , 9.397416491632761826e-01 )
(4.285714285714283811e-01 , 9.569360274017230106e-01 )
(4.603174603174602364e-01 , 9.741375486934655825e-01 )
(4.920634920634920917e-01 , 9.913465426353189613e-01 )
(5.238095238095237249e-01 , 1.008563591097535017e+00 )
(5.555555555555553582e-01 , 1.025789552150159878e+00 )
(5.873015873015872135e-01 , 1.043023776701561767e+00 )
(6.190476190476190688e-01 , 1.060245528604918608e+00 )
(6.507936507936507020e-01 , 1.077444159667310775e+00 )
(6.825396825396823353e-01 , 1.094622447282587618e+00 )
(7.142857142857141906e-01 , 1.111784976586207563e+00 )
(7.460317460317460458e-01 , 1.128937108509496579e+00 )
(7.777777777777776791e-01 , 1.146083834300100968e+00 )
(8.095238095238093123e-01 , 1.163228793290435714e+00 )
(8.412698412698411676e-01 , 1.180373732553916311e+00 )
(8.730158730158730229e-01 , 1.197519647387572439e+00 )
(9.047619047619046562e-01 , 1.214668453254861946e+00 )
(9.365079365079362894e-01 , 1.231823099729365767e+00 )
(9.682539682539681447e-01 , 1.248987574430430403e+00 )
(1.000000000000000000e+00 , 1.266161587515144182e+00 )
};
\addplot[myred, thick] coordinates{
(-1.000000000000000000e+00, 1.300983661740329200e+00 )
(-9.682539682539682557e-01, 1.314694714773623652e+00 )
(-9.365079365079365115e-01, 1.327269351821331433e+00 )
(-9.047619047619047672e-01, 1.338188075179221226e+00 )
(-8.730158730158730229e-01, 1.346705806262539440e+00 )
(-8.412698412698412787e-01, 1.351776992370651298e+00 )
(-8.095238095238095344e-01, 1.352034816747087920e+00 )
(-7.777777777777777901e-01, 1.345667708715729383e+00 )
(-7.460317460317460458e-01, 1.330342173803381822e+00 )
(-7.142857142857143016e-01, 1.303580138532665966e+00 )
(-6.825396825396825573e-01, 1.263029919699073744e+00 )
(-6.507936507936508130e-01, 1.207232070110383138e+00 )
(-6.190476190476190688e-01, 1.136583576500507231e+00 )
(-5.873015873015873245e-01, 1.054159719653054994e+00 )
(-5.555555555555555802e-01, 9.658246254803812825e-01 )
(-5.238095238095238360e-01, 8.792427626204092350e-01 )
(-4.920634920634920917e-01, 8.017852746057011482e-01 )
(-4.603174603174603474e-01, 7.386661216967826959e-01 )
(-4.285714285714286031e-01, 6.920344816744814498e-01 )
(-3.968253968253968589e-01, 6.612413809459850578e-01 )
(-3.650793650793651146e-01, 6.438975504994934873e-01 )
(-3.333333333333333703e-01, 6.370082727749214468e-01 )
(-3.015873015873016261e-01, 6.377297021268236721e-01 )
(-2.698412698412698818e-01, 6.438012264621166825e-01 )
(-2.380952380952381375e-01, 6.535256795732037327e-01 )
(-2.063492063492063933e-01, 6.656849891908277472e-01 )
(-1.746031746031746490e-01, 6.794606224565025165e-01 )
(-1.428571428571429047e-01, 6.943039779257489386e-01 )
(-1.111111111111111605e-01, 7.098513210746054680e-01 )
(-7.936507936507941618e-02, 7.258631813232232499e-01 )
(-4.761904761904767192e-02, 7.421817272615858219e-01 )
(-1.587301587301592765e-02, 7.587026423412643883e-01 )
(1.587301587301581662e-02 , 7.753566804970630777e-01 )
(4.761904761904744987e-02 , 7.920975644992621634e-01 )
(7.936507936507930516e-02 , 8.088940910106370286e-01 )
(1.111111111111111605e-01 , 8.257249792355874085e-01 )
(1.428571428571427937e-01 , 8.425754989091427394e-01 )
(1.746031746031744269e-01 , 8.594352550860018258e-01 )
(2.063492063492062822e-01 , 8.762967137128947481e-01 )
(2.380952380952381375e-01 , 8.931542374138861096e-01 )
(2.698412698412697708e-01 , 9.100034950976017001e-01 )
(3.015873015873014040e-01 , 9.268409575222822383e-01 )
(3.333333333333332593e-01 , 9.436636262932021069e-01 )
(3.650793650793651146e-01 , 9.604688321451821942e-01 )
(3.968253968253967479e-01 , 9.772540918455583370e-01 )
(4.285714285714283811e-01 , 9.940170060656572382e-01 )
(4.603174603174602364e-01 , 1.010755182003458374e+00 )
(4.920634920634920917e-01 , 1.027466170146559499e+00 )
(5.238095238095237249e-01 , 1.044147407639676395e+00 )
(5.555555555555553582e-01 , 1.060796161082687261e+00 )
(5.873015873015872135e-01 , 1.077409461103115973e+00 )
(6.190476190476190688e-01 , 1.093984018947001369e+00 )
(6.507936507936507020e-01 , 1.110516110884716579e+00 )
(6.825396825396823353e-01 , 1.127001408826770135e+00 )
(7.142857142857141906e-01 , 1.143434722993906361e+00 )
(7.460317460317460458e-01 , 1.159809604300692909e+00 )
(7.777777777777776791e-01 , 1.176117722655933395e+00 )
(8.095238095238093123e-01 , 1.192347897576518712e+00 )
(8.412698412698411676e-01 , 1.208484585776947284e+00 )
(8.730158730158730229e-01 , 1.224505542842956984e+00 )
(9.047619047619046562e-01 , 1.240378238055289906e+00 )
(9.365079365079362894e-01 , 1.256054393347656895e+00 )
(9.682539682539681447e-01 , 1.271461826545886797e+00 )
(1.000000000000000000e+00 , 1.286492249992628345e+00 )
};
\end{axis}
\end{tikzpicture}
\label{fig: moving shock b}
}%
\caption{Experiment 3. Second-order numerical approximations for the local conservation law and the nonlocal model (in blue and red, respectively) with $\Dx=1/32$, $\lambda = 0.25$, $\delta=0.125$, $p=1$, and $u(x,0)=u_0^3(x)$.}
\label{fig: moving shock}
\end{figure}
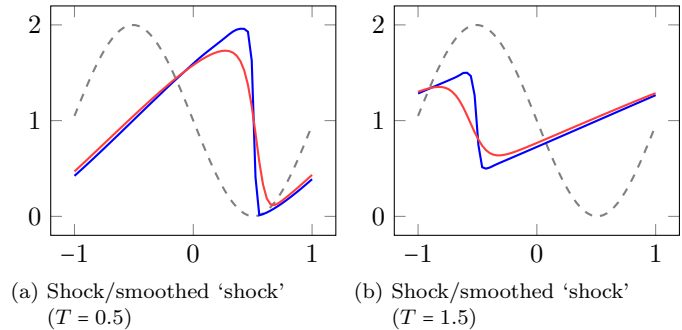

\subsection{Experiment 4: Comparison of different numerical flux functions}
In this numerical experiment we want to investigate how the (stationary) shock formation in the nonlocal model depends on the choice of numerical flux $g$ in the model. To that end, we compare the Godunov flux, the Engquist--Osher flux and the Lax--Friedrichs flux. First, we consider the Riemann problem for the nonlocal model, using the initial datum
\begin{equation*}
	u_0^4(x) = \begin{cases}
		\phantom{-}1 & \text{if }x<0,\\
		-1 & \text{if }x>0.
	\end{cases}
\end{equation*}
\Cref{fig: Riemann problem} shows numerical solutions computed with the second-order scheme~\eqref{num scheme} for the local conservation law (blue) and for the nonlocal model (red) at $T=1$.
\begin{figure}[t]
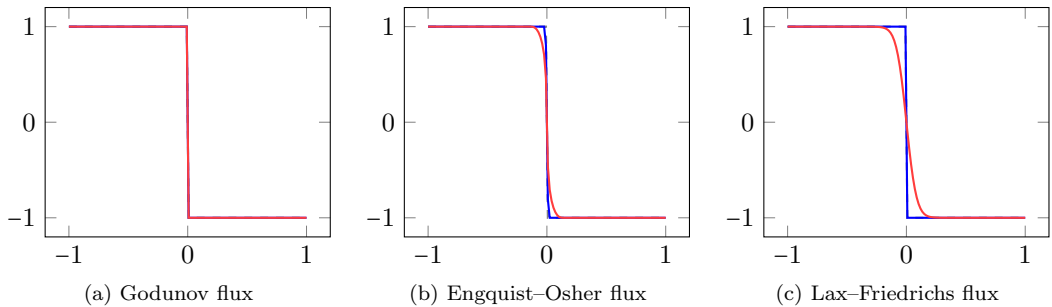

\centering
\subfloat[Godunov flux]{

\label{LxF Riemann problem}
}%
\caption{Experiment 4. Second-order numerical approximations for the local conservation law and the nonlocal model (in blue and red respectively) with $\Dx=2/128$, $\lambda = 0.8$, $\delta=0.125$, $p=0$, $T=1$, and $u(x,0)=u_0^4(x)$.}
\label{fig: Riemann problem}
\end{figure}
Note that the entropy solution of the local conservation law is the stationary shock given by $u(x,t)=u_0^4(x)$. We observe that in the case of the Godunov flux the entropy solution of the nonlocal model also is the stationary shock $u(x,t)=u_0^4(x)$, in case of the Engquist--Osher flux it is a stationary shock centered at $x=0$, but not equal to $u_0^4$, and in case of the Lax--Friedrich flux the entropy solution of the nonlocal model appears to be smooth.

In fact, by going back to the entropy condition in \Cref{def: entropy solution}, we can prove that the Riemann problem for the nonlocal model with general initial datum
\begin{equation*}
	u_0(x) = \begin{cases}
		u_L & \text{if }x<0,\\
		u_R & \text{if }x>0,
	\end{cases}
\end{equation*}
has the entropy solution $u(x,t)=u_0(x)$ if and only if $f(u_L)=f(u_R)$ and the numerical flux satisfies $g(u_L,u_R) = f(u_L)\,(=f(u_R))$, which is satisfied by the Godunov flux and the upwind and downwind fluxes, but not by the Engquist--Osher or Lax--Friedrichs flux.
\begin{figure}[t]
\centering
\subfloat[Godunov flux]{

}%
\caption{Experiment 4. $\Lone$ errors and observed order of convergence of the second-order method for 
the nonlocal model 
with $\lambda=0.8$, $\delta = 0.125$, $T=1$, $u(x,0)=u_0^2(x)$, and 
$p=0$.}
\label{tbl: Errors and convergence rates experiment 4}
\end{table}

\Cref{fig: flux comparison sine} again shows numerical solutions computed with the second-order scheme for the local conservation law (blue) and for the nonlocal model (red) using the initial datum $u_0^2$ and different numerical flux functions. \Cref{tbl: Errors and convergence rates experiment 4} shows the corresponding $\Lone$ errors against an approximation on a very fine grid ($n=1024$) and the observed order of convergence. We observe a reduction in the convergence rate, which is to be expected in the presence of (stationary) shocks.

We want to highlight that Experiment 3 suggests that the nonlocal model only exhibits a (stationary) shock if the corresponding local conservation law exhibits a stationary shock. Experiment 4 on the other hand suggests that the converse is not true.
Even if the local conservation law exhibits a stationary shock the nonlocal model might not.

\subsection{Experiment 5: Asymptotic compatibility}
Lastly, we want to numerically  verify that if $(\delta,\Dx)\to (0,0)$ the numerical approximation given by the second-order scheme for the nonlocal equation converges towards the solution of the local equation. To that end, we consider the initial datum $u_0^1$ from Experiment 1 and take $\delta=3\Dx$. \Cref{fig: Experiment 5} shows numerical solutions calculated with the second-order method for the nonlocal model (red) for various values of $\Dx$ and for the local conservation law (blue) at $T=0.5$. \Cref{tbl: Errors and convergence rates experiment 5} shows the corresponding $\Lone$ errors against an approximation of the entropy solution of the local conservation law with $n=1024$. Both \Cref{fig: Experiment 5} and \Cref{tbl: Errors and convergence rates experiment 5} clearly show that the second-order scheme for the nonlocal equation is asymptotically compatible, i.e., it converges towards the entropy solution of the local conservation law -- albeit apparently at a first-order rate.

\begin{figure}[t]
\centering
\begin{tikzpicture}
\begin{axis}[mark size=1.7pt]
\addplot+[gray, thick, dashed, mark=none] coordinates{
(0.000000000000000000e+00 , 5.969195893703570510e-01)
(6.666666666666666574e-02 , 7.760036392076995471e-01)
(1.333333333333333315e-01 , 9.130686369548897119e-01)
(2.000000000000000111e-01 , 9.872476792022164549e-01)
(2.666666666666666630e-01 , 9.872476792022162329e-01)
(3.333333333333333148e-01 , 9.130686369548897119e-01)
(4.000000000000000222e-01 , 7.760036392076994360e-01)
(4.666666666666666741e-01 , 5.969195893703576061e-01)
(5.333333333333333259e-01 , 4.030804106296423939e-01)
(5.999999999999999778e-01 , 2.239963607923014521e-01)
(6.666666666666666297e-01 , 8.693136304511028811e-02)
(7.333333333333332815e-01 , 1.275232079778287897e-02)
(8.000000000000000444e-01 , 1.275232079778465533e-02)
(8.666666666666666963e-01 , 8.693136304511028811e-02)
(9.333333333333333481e-01 , 2.239963607922996758e-01)
(1.000000000000000000e+00 , 4.030804106296423939e-01)
};
\addplot[myred, thick]
table{
0.0000 0.2464
0.0323 0.2832
0.0645 0.3203
0.0968 0.3577
0.1290 0.3951
0.1613 0.4325
0.1935 0.4697
0.2258 0.5067
0.2581 0.5433
0.2903 0.5794
0.3226 0.6149
0.3548 0.6496
0.3871 0.6832
0.4194 0.7157
0.4516 0.7465
0.4839 0.7754
0.5161 0.8017
0.5484 0.8245
0.5806 0.8422
0.6129 0.8521
0.6452 0.8503
0.6774 0.8281
0.7097 0.7612
0.7419 0.6070
0.7742 0.3588
0.8065 0.1586
0.8387 0.0830
0.8710 0.0816
0.9032 0.1071
0.9355 0.1398
0.9677 0.1744
1.0000 0.2101
};
\addplot[myred!75!blue,thick]
table{
0.0000 0.2247
0.0159 0.2427
0.0317 0.2608
0.0476 0.2791
0.0635 0.2975
0.0794 0.3160
0.0952 0.3346
0.1111 0.3533
0.1270 0.3720
0.1429 0.3908
0.1587 0.4096
0.1746 0.4284
0.1905 0.4472
0.2063 0.4661
0.2222 0.4849
0.2381 0.5036
0.2540 0.5224
0.2698 0.5410
0.2857 0.5596
0.3016 0.5782
0.3175 0.5966
0.3333 0.6149
0.3492 0.6331
0.3651 0.6511
0.3810 0.6690
0.3968 0.6866
0.4127 0.7041
0.4286 0.7213
0.4444 0.7383
0.4603 0.7550
0.4762 0.7713
0.4921 0.7873
0.5079 0.8028
0.5238 0.8179
0.5397 0.8324
0.5556 0.8463
0.5714 0.8594
0.5873 0.8716
0.6032 0.8828
0.6190 0.8926
0.6349 0.9006
0.6508 0.9064
0.6667 0.9087
0.6825 0.9064
0.6984 0.8957
0.7143 0.8671
0.7302 0.7977
0.7460 0.6427
0.7619 0.3865
0.7778 0.1599
0.7937 0.0551
0.8095 0.0304
0.8254 0.0352
0.8413 0.0471
0.8571 0.0606
0.8730 0.0750
0.8889 0.0900
0.9048 0.1056
0.9206 0.1216
0.9365 0.1380
0.9524 0.1548
0.9683 0.1719
0.9841 0.1893
1.0000 0.2069
};
\addplot[myred!50!blue, thick]
table{
0.0000 0.2137
0.0079 0.2226
0.0157 0.2315
0.0236 0.2405
0.0315 0.2495
0.0394 0.2585
0.0472 0.2676
0.0551 0.2767
0.0630 0.2859
0.0709 0.2951
0.0787 0.3043
0.0866 0.3136
0.0945 0.3228
0.1024 0.3322
0.1102 0.3415
0.1181 0.3508
0.1260 0.3602
0.1339 0.3696
0.1417 0.3790
0.1496 0.3884
0.1575 0.3978
0.1654 0.4072
0.1732 0.4167
0.1811 0.4262
0.1890 0.4356
0.1969 0.4451
0.2047 0.4545
0.2126 0.4640
0.2205 0.4735
0.2283 0.4830
0.2362 0.4924
0.2441 0.5019
0.2520 0.5114
0.2598 0.5208
0.2677 0.5303
0.2756 0.5397
0.2835 0.5491
0.2913 0.5585
0.2992 0.5679
0.3071 0.5773
0.3150 0.5867
0.3228 0.5960
0.3307 0.6053
0.3386 0.6146
0.3465 0.6239
0.3543 0.6332
0.3622 0.6424
0.3701 0.6516
0.3780 0.6607
0.3858 0.6699
0.3937 0.6790
0.4016 0.6880
0.4094 0.6970
0.4173 0.7060
0.4252 0.7149
0.4331 0.7238
0.4409 0.7326
0.4488 0.7414
0.4567 0.7501
0.4646 0.7587
0.4724 0.7673
0.4803 0.7758
0.4882 0.7842
0.4961 0.7926
0.5039 0.8009
0.5118 0.8091
0.5197 0.8172
0.5276 0.8252
0.5354 0.8331
0.5433 0.8409
0.5512 0.8485
0.5591 0.8561
0.5669 0.8635
0.5748 0.8707
0.5827 0.8778
0.5906 0.8847
0.5984 0.8915
0.6063 0.8980
0.6142 0.9043
0.6220 0.9104
0.6299 0.9162
0.6378 0.9217
0.6457 0.9269
0.6535 0.9317
0.6614 0.9360
0.6693 0.9399
0.6772 0.9432
0.6850 0.9457
0.6929 0.9473
0.7008 0.9477
0.7087 0.9463
0.7165 0.9417
0.7244 0.9301
0.7323 0.9014
0.7402 0.8289
0.7480 0.6624
0.7559 0.3894
0.7638 0.1538
0.7717 0.0437
0.7795 0.0131
0.7874 0.0107
0.7953 0.0143
0.8031 0.0189
0.8110 0.0240
0.8189 0.0294
0.8268 0.0352
0.8346 0.0412
0.8425 0.0475
0.8504 0.0540
0.8583 0.0608
0.8661 0.0677
0.8740 0.0748
0.8819 0.0821
0.8898 0.0895
0.8976 0.0971
0.9055 0.1048
0.9134 0.1127
0.9213 0.1206
0.9291 0.1287
0.9370 0.1368
0.9449 0.1450
0.9528 0.1534
0.9606 0.1618
0.9685 0.1703
0.9764 0.1788
0.9843 0.1875
0.9921 0.1962
1.0000 0.2049
};
\addplot[blue, thick]
table{
0.0000 0.2071
0.0079 0.2159
0.0157 0.2247
0.0236 0.2336
0.0315 0.2425
0.0394 0.2515
0.0472 0.2605
0.0551 0.2696
0.0630 0.2787
0.0709 0.2879
0.0787 0.2971
0.0866 0.3063
0.0945 0.3156
0.1024 0.3248
0.1102 0.3342
0.1181 0.3435
0.1260 0.3529
0.1339 0.3623
0.1417 0.3717
0.1496 0.3811
0.1575 0.3905
0.1654 0.4000
0.1732 0.4095
0.1811 0.4190
0.1890 0.4285
0.1969 0.4380
0.2047 0.4475
0.2126 0.4570
0.2205 0.4666
0.2283 0.4761
0.2362 0.4857
0.2441 0.4952
0.2520 0.5048
0.2598 0.5143
0.2677 0.5239
0.2756 0.5334
0.2835 0.5430
0.2913 0.5525
0.2992 0.5620
0.3071 0.5716
0.3150 0.5811
0.3228 0.5906
0.3307 0.6000
0.3386 0.6095
0.3465 0.6189
0.3543 0.6284
0.3622 0.6378
0.3701 0.6472
0.3780 0.6566
0.3858 0.6659
0.3937 0.6752
0.4016 0.6844
0.4094 0.6937
0.4173 0.7030
0.4252 0.7122
0.4331 0.7214
0.4409 0.7305
0.4488 0.7395
0.4567 0.7485
0.4646 0.7574
0.4724 0.7664
0.4803 0.7754
0.4882 0.7843
0.4961 0.7931
0.5039 0.8018
0.5118 0.8103
0.5197 0.8188
0.5276 0.8273
0.5354 0.8358
0.5433 0.8443
0.5512 0.8528
0.5591 0.8610
0.5669 0.8690
0.5748 0.8768
0.5827 0.8845
0.5906 0.8921
0.5984 0.8998
0.6063 0.9074
0.6142 0.9151
0.6220 0.9227
0.6299 0.9298
0.6378 0.9365
0.6457 0.9428
0.6535 0.9489
0.6614 0.9550
0.6693 0.9610
0.6772 0.9671
0.6850 0.9733
0.6929 0.9798
0.7008 0.9856
0.7087 0.9889
0.7165 0.9899
0.7244 0.9896
0.7323 0.9843
0.7402 0.9638
0.7480 0.8089
0.7559 0.2330
0.7638 0.0026
0.7717 0.0032
0.7795 0.0059
0.7874 0.0093
0.7953 0.0131
0.8031 0.0175
0.8110 0.0222
0.8189 0.0273
0.8268 0.0328
0.8346 0.0385
0.8425 0.0445
0.8504 0.0508
0.8583 0.0572
0.8661 0.0639
0.8740 0.0708
0.8819 0.0779
0.8898 0.0851
0.8976 0.0925
0.9055 0.1000
0.9134 0.1076
0.9213 0.1154
0.9291 0.1233
0.9370 0.1312
0.9449 0.1393
0.9528 0.1475
0.9606 0.1558
0.9685 0.1642
0.9764 0.1726
0.9843 0.1811
0.9921 0.1897
1.0000 0.1984
};
\end{axis}
\end{tikzpicture}
\caption{Experiment 5. Second-order numerical approximations for the nonlocal model with $\delta=3\Dx$ with $\Dx=1/32$, $\Dx=1/64$, and $\Dx=1/128$ (increasingly darker red, $p=0$) and for the local conservation law with $\Dx=1/128$ (blue) and $\lambda=0.8$, $T=0.5$, and $u(x,0)=u_0^1(x)$.}
\label{fig: Experiment 5}
\end{figure}

\begin{table}[t]
\centering
\begin{tabular}{@{}rlcc@{}}
  \toprule
  \multicolumn{1}{c}{$n$} & \multicolumn{1}{c}{$\delta$} & $\Lone$ error & $\Lone$ OOC\\
  \midrule
	$  8$ & $0.375$  & $\num{1.689e-01}$ & -- \\
	$ 16$ & $0.1875$ & $\num{1.052e-01}$ & $0.68$ \\
	$ 32$ & $0.09375$ & $\num{6.065e-02}$ & $0.80$ \\
	$ 64$ & $0.046875$ & $\num{3.239e-02}$ & $0.90$ \\
	$128$ & $0.0234375$ & $\num{1.616e-02}$ & $1.00$ \\
	$256$ & $0.01171875$ & $\num{7.747e-03}$ & $1.06$ \\
	$512$ & $0.005859375$ & $\num{3.612e-03}$ & $1.10$ \\
  \bottomrule
\end{tabular}
\caption{Experiment 5. $\Lone$ errors and observed order of convergence of the second-order method for the nonlocal model where $\delta=3\Dx$ with $\lambda=0.8$, $T=0.5$, $u(x,0)=u_0^1(x)$, and $p=0$.}
\label{tbl: Errors and convergence rates experiment 5}
\end{table}

\section{Conclusion}
We have developed and analyzed a second-order accurate numerical method for the nonlocal pair-interaction model. Our numerical method generalizes second-order reconstruction-based schemes for local conservation laws in the sense that, as the nonlocal horizon parameter vanishes, we recover a well-known second-order scheme for the local equation. In contrast to the case of local conservation laws, the second-order scheme we developed converges towards the unique entropy solution provided that the nonlocal interaction kernel satisfies a certain growth condition near zero.

We further proved that weak solutions of the nonlocal pair-interaction model have more regularity as compared to solutions of local conservation laws -- a fact that increases the impact and effectiveness of second-order schemes for the nonlocal model. In particular, we showed that weak solutions of the nonlocal model can only exhibit stationary discontinuities and that traveling wave solutions, when not stationary, are smooth.

Lastly, we provided several numerical experiments comparing our second-order scheme to the first-order scheme presented in~\cite{du2017nonlocal,du2017numerical} and to second-order reconstruction-based schemes for local conservation laws. Notably, we observed a second-order convergence rate for our scheme in regimes where the second-order scheme for the corresponding local conservation law deteriorates to first-order. Our numerical experiments further indicated that the formation of (stationary) shocks in the nonlocal model depends on the choice of flux function and that our scheme is asymptotically compatible with the local entropy solution.

\begin{appendices}
\crefalias{section}{appendix}

\section{Temporal Lipschitz continuity}\label{app:lipschitz}
\begin{lemma}\label{prop:lipschitz}
Let $u\in \mathrm{L}^\infty(\R\times(0,T))$ satisfy
\begin{equation}\label{eq:weaklipschitz}
\int_\R\int_0^T u(x,t)\frac{\partial\phi}{\partial t}(x,t)\diff t\diff x \leq C\|\phi\|_{\mathrm{L}^1((\R\times(0,T)))}
\end{equation}
for all $\phi\in \mathcal{C}^1_c(\R\times(0,T))$. Then there is a function $\widetilde{u}:\R\times(0,T)\to\R$, equal to $u$ almost everywhere, such that $t\mapsto \widetilde{u}(x,t)$ is Lipschitz continuous for a.e.~$x\in\R$.
\end{lemma}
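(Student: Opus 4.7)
The plan is to recognize \eqref{eq:weaklipschitz} as a statement that the distributional time derivative $\partial_t u$ is represented by an $L^\infty$ function, disintegrate that identity in $x$, and conclude that $u(x,\cdot)$ is Lipschitz for a.e.~$x$.

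First I would apply \eqref{eq:weaklipschitz} to $\phi$ and to $-\phi$ (both admissible since $C^1_c$ is closed under negation) to upgrade the bound to the two-sided estimate
\begin{equation*}
	\left| \int_\R\int_0^T u\,\partial_t\phi\diff t\diff x \right| \leq C\|\phi\|_{\mathrm{L}^1(\R\times(0,T))}
	\qquad\forall\,\phi\in \mathcal{C}^1_c(\R\times(0,T)).
\end{equation*}
Since $\mathcal{C}^1_c(\R\times(0,T))$ is dense in $\mathrm{L}^1(\R\times(0,T))$, the linear map $\phi\mapsto -\int\!\!\int u\,\partial_t\phi$ extends to a bounded linear functional on $\mathrm{L}^1(\R\times(0,T))$ of norm at most $C$. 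By the duality $(\mathrm{L}^1)^*=\mathrm{L}^\infty$ (applicable since $\R\times(0,T)$ is $\sigma$-finite) there exists $v\in\mathrm{L}^\infty(\R\times(0,T))$ with $\|v\|_{\mathrm{L}^\infty}\leq C$ such that
\begin{equation*}
	-\int_\R\int_0^T u\,\partial_t\phi\diff t\diff x = \int_\R\int_0^T v\,\phi\diff t\diff x
	\qquad\forall\,\phi\in \mathcal{C}^1_c(\R\times(0,T)),
\end{equation*}
i.e.\ $\partial_t u=v$ in the sense of distributions on $\R\times(0,T)$.

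Next I would pass to slices in $x$. Fix a countable dense family $\{\eta_k\}_{k\in\N}\subset \mathcal{C}^1_c(0,T)$ and test the above identity with $\phi(x,t)=\psi(x)\eta_k(t)$ for arbitrary $\psi\in \mathcal{C}_c(\R)$; a standard density/localisation argument yields a null set $N_k\subset\R$ such that for every $x\notin N_k$
\begin{equation*}
	\int_0^T u(x,t)\,\eta_k'(t)\diff t = -\int_0^T v(x,t)\,\eta_k(t)\diff t.
\end{equation*}
Let $N\coloneqq\bigcup_k N_k$; it is still null. For $x\notin N$, density of $\{\eta_k\}$ in $\mathcal{C}^1_c(0,T)$ together with the $\mathrm{L}^\infty$ bounds on $u(x,\cdot)$ and $v(x,\cdot)$ (valid for a.e.~$x$ by Fubini) shows that the identity persists for all test functions $\eta\in \mathcal{C}^1_c(0,T)$. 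Hence for a.e.\ $x\in\R$, the distributional derivative $\partial_t u(x,\cdot)$ coincides with $v(x,\cdot)\in \mathrm{L}^\infty(0,T)$ and $\|v(x,\cdot)\|_{\mathrm{L}^\infty(0,T)}\leq \|v\|_{\mathrm{L}^\infty}\leq C$.

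A function on $(0,T)$ whose distributional derivative lies in $\mathrm{L}^\infty(0,T)$ admits a Lipschitz representative with Lipschitz constant equal to the $\mathrm{L}^\infty$-norm of its derivative. Thus for every $x\notin N$ we may define
\begin{equation*}
	\widetilde{u}(x,t)\coloneqq \lim_{\eps\to 0}\frac{1}{2\eps}\int_{t-\eps}^{t+\eps} u(x,s)\diff s,
\end{equation*}
which gives a $C$-Lipschitz function of $t\in(0,T)$ coinciding with $u(x,\cdot)$ almost everywhere; on the null set $N$ we set $\widetilde{u}$ to be any fixed value (e.g.\ $0$). By Fubini's theorem, $\widetilde{u}=u$ almost everywhere on $\R\times(0,T)$, which proves the claim. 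The only mildly delicate point is the disintegration step from the joint identity to the slice-wise identity; everything else is a routine application of duality and the fundamental theorem of calculus.
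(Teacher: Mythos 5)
Your proof is correct and follows essentially the same route as the paper's: disintegrate the estimate into $x$-slices and apply the one-dimensional fact that a function with essentially bounded weak derivative admits a Lipschitz representative (Morrey's inequality). The only difference is presentational --- you make explicit the $(\mathrm{L}^1)^*=\mathrm{L}^\infty$ duality step producing $\partial_t u = v\in\mathrm{L}^\infty$ and the countable-dense-family argument justifying the passage to slices, both of which the paper's proof leaves implicit.
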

\begin{proof}
Let $K\subset\R$ be any compact set and view $u$ as a function $u\in \mathrm{L}^1(K\times(0,T))$. We identify $u$ with an a.e.~equal function $v \in \mathrm{L}^1(K, \mathrm{L}^1(0,T))$. Then \eqref{eq:weaklipschitz} implies
\[
\int_K \psi(x) \int_0^T \theta'(t) v(x,t)\diff t\diff x \leq C\|\psi\|_{\mathrm{L}^1(K)}\|\theta\|_{\mathrm{L}^1(0,T)}
\]
for all $\psi\in \mathrm{L}^1(K)$, $\theta\in \mathcal{C}^1([0,T])$, which again implies that $\int_0^T \theta'(t) v(x,t)\diff t \leq C\|\theta\|_{\mathrm{L}^1(0,T)}$ for a.e.~$x\in K$. Hence, by Morrey's inequality, there is (for a.e.~$x\in K$) a function $\widetilde{u}=\widetilde{u}(x,t)$ with $\widetilde{u}(x,t)=v(x,t)$ for a.e.~$t\in(0,T)$ such that $t \mapsto \widetilde{u}(x,t)$ is Lipschitz continuous with Lipschitz constant at most $C$. Since $K\subset\R$ was arbitrary, the claim holds for a.e.~$x\in\R$. Finally, it is clear that $\widetilde{u}=u$ a.e.~in $\R\times(0,T)$.
\end{proof}

\section{Integration by parts on a bounded domain}\label{app: integration by parts}
\begin{lemma}\label{lem: Integration by parts}
    Assume that $u\in \mathcal{C}^1(U)$ for some open $U\subset\Omega\coloneqq\R\times\R_+$ and let $\phi\in \mathcal{C}^1_c(\Omega)$. Let $D\subset U$ be bounded with Lipschitz boundary and satisfy $\overline{D}\subset U$. Then
    \begin{equation}
    \begin{split}
    &\iint_{D} \biggl(u\frac{\partial\phi}{\partial t} + \int_0^\delta \frac{\tau_h\phi-\phi}{h}g(u,\tau_hu)\omega_\delta(h)\diff h \biggr)\diff x \diff t \\
    &= -\iint_D \phi\biggl(\frac{\partial u}{\partial t} + \int_0^\delta \frac{g(u,\tau_h u) - g(\tau_{-h}u,u)}{h}\omega_\delta(h)\diff h\biggr) \diff x \diff t\\
    &\mathrel{\hphantom{=}} +\int_{\partial D}u\phi n_t\diff S + \int_0^\delta\iint_\Omega\phi g(\tau_{-h}u,u)\frac{\tau_{-h}\ind_D-\ind_D}{h}\omega_\delta(h)\diff x\diff t\diff h
    \end{split}
    \end{equation}
    where $n_t$ is the $t$-component of the outward pointing normal to $\partial D$.
\end{lemma}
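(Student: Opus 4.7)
The plan is to split the identity into two independent pieces and handle each separately. The first piece is the standard time-derivative term $\iint_D u\, \partial_t\phi\,\diff x\diff t$, which yields $-\iint_D \phi\, \partial_t u\,\diff x\diff t+\int_{\partial D} u\phi\, n_t\,\diff S$ by applying the divergence theorem (or Green's identity) to the vector field $(0, u\phi)\in\mathcal{C}^1(U;\R^2)$ on the bounded Lipschitz domain $D$. This is routine since $u\in\mathcal{C}^1(U)$, $\phi\in\mathcal{C}^1_c(\Omega)$, and $\overline{D}\subset U$.

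The second piece is the nonlocal term. The strategy there is to transfer the shift from $\phi$ onto $u$ via a change of variables, at the cost of a boundary defect coming from the indicator $\ind_D$. Fixing $h\in(0,\delta)$ and writing the integral over $D$ as an integral over $\Omega$ against $\ind_D$, split
\begin{equation*}
\iint_\Omega \ind_D\, \frac{\tau_h\phi-\phi}{h}\, g(u,\tau_h u)\,\diff x\diff t = \frac{1}{h}\iint_\Omega \ind_D(x,t)\, \tau_h\phi\, g(u,\tau_h u)\,\diff x\diff t - \frac{1}{h}\iint_\Omega \ind_D(x,t)\, \phi\, g(u,\tau_h u)\,\diff x\diff t.
\end{equation*}
In the first term, the substitution $x\mapsto x-h$ (valid since $\phi$ has compact support, so all integrals are absolutely convergent) converts the integrand to $\ind_D(x-h,t)\, \phi(x,t)\, g(\tau_{-h}u,u)(x,t)$. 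Adding and subtracting $\ind_D(x,t)\, \phi\, g(\tau_{-h}u,u)/h$ then rewrites the result as
\begin{equation*}
-\iint_D \phi\, \frac{g(u,\tau_h u)-g(\tau_{-h}u,u)}{h}\,\diff x\diff t + \iint_\Omega \phi\, g(\tau_{-h}u,u)\, \frac{\tau_{-h}\ind_D-\ind_D}{h}\,\diff x\diff t.
\end{equation*}

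Finally, multiplying by $\omega_\delta(h)$ and integrating over $h\in(0,\delta)$, the claimed identity drops out after one application of Fubini's theorem. Applicability of Fubini is justified by the boundedness of $u$ on $\overline{D}$, the compact support of $\phi$, the Lipschitz continuity of $g$, and the integrability of $\omega_\delta$; indeed $|g(\tau_{-h}u,u)|$ is uniformly bounded on $\supp\phi\cup(\supp\phi+(h,0))$ for $h\in(0,\delta)$, and the difference quotient $(\tau_{-h}\ind_D-\ind_D)/h$, though unbounded pointwise as $h\to 0$, is used only inside integrals that are finite for each fixed $h$.

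The only subtlety—and the step one should be mildly careful with—is the change of variables in the first split term: one must keep track of whether the indicator is being evaluated at $x$ or at the shifted point $x-h$, since this is precisely what produces the defect term $(\tau_{-h}\ind_D-\ind_D)/h$ carrying the geometric information of $\partial D$. Apart from that bookkeeping, no delicate arguments are required, and I expect the entire proof to be short.
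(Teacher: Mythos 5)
Your decomposition and the change of variables $x\mapsto x-h$ that produces the indicator defect term $(\tau_{-h}\ind_D-\ind_D)/h$ are exactly the computation in the paper's proof, and the time-derivative part is handled by the divergence theorem in both. The only organizational difference is in how the $h$-integration is treated: the paper first truncates to $h\in(\eps,\delta)$, where the factor $1/h$ is harmless, performs the same manipulation, and then sends $\eps\to0$, proving convergence of each resulting piece separately; you work at fixed $h$ and integrate the resulting identity against $\omega_\delta(h)\diff h$ at the end, appealing to Fubini.

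The gap is in your justification of that last step. Finiteness of each term ``for each fixed $h$'' is not enough: you must show that the fixed-$h$ quantities, multiplied by $\omega_\delta(h)$, are integrable over $h\in(0,\delta)$, and that a further Fubini swap is legitimate to move $\int_0^\delta$ inside $\iint_D$ in the first right-hand-side term. Both points require bounds that are uniform (or at least $\omega_\delta$-integrable) as $h\to0$, and these are precisely where two hypotheses you never invoke enter. First, $\bigl|\iint_\Omega \phi\, g(\tau_{-h}u,u)\tfrac{\tau_{-h}\ind_D-\ind_D}{h}\diff x\diff t\bigr|\le \|\phi\\|_{\mathrm{L}^\infty}\|g\|_{\mathrm{L}^\infty}\tfrac{1}{h}|(\tau_{-h}D)\ominus D|$, and one needs $|(\tau_{-h}D)\ominus D|\le C h|\partial D|$; this is where the Lipschitz boundary of $D$ is used. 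Second, $\bigl|\tfrac{g(u,\tau_hu)-g(\tau_{-h}u,u)}{h}\bigr|\le \tfrac{C}{h}\bigl(|u(x+h)-u(x)|+|u(x)-u(x-h)|\bigr)\le 2C\|\partial_x u\|_{\mathrm{L}^\infty(U)}$ for $x\in D$ and $h<\dist(D,\partial U)$; this is where $u\in\mathcal{C}^1(U)$, $\overline{D}\subset U$, and the Lipschitz continuity of $g$ are used. These estimates are genuinely necessary and not cosmetic, because the growth condition $\int_0^\delta \omega_\delta(h)/h\diff h<\infty$ is \emph{not} assumed in this lemma, so a crude $O(1/h)$ bound on either term would not suffice. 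With the two uniform bounds stated, your argument closes and is equivalent to the paper's; without them, the claim that Fubini applies is unsupported.
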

\begin{proof}
The integration by parts of the $u\frac{\partial\phi}{\partial t}$ term is standard. Let $\eps\in(0,\delta)$ and write the spatial term as $E_\eps+\tilde{E}_\eps$, where
\begin{align*}
\tilde{E}_\eps &= \iint_{D}\int_0^\eps \frac{\tau_h\phi-\phi}{h}g(u,\tau_hu)\omega_\delta(h)\diff h \diff x \diff t,\\
E_\eps &= \iint_{D}\int_\eps^\delta \frac{\tau_h\phi-\phi}{h}g(u,\tau_hu)\omega_\delta(h)\diff h \diff x \diff t.
\end{align*}
If $|D|$ denotes the (finite) Lebesgue measure of $D$ then $$|\tilde{E}_\eps|\leq |D|\|g(u,\tau_\cdot u)\|_{\mathrm{L}^\infty}\|\partial_x\phi\|_{\mathrm{L}^\infty}\|\omega_\delta\|_{\mathrm{L}^1((0,\eps))}\to0$$ as $\eps\to0$, and
\begin{align*}
E_\eps &= \iint_{\Omega}\ind_D\int_\eps^\delta \frac{\tau_h\phi}{h}g(u,\tau_hu)\omega_\delta(h)\diff h \diff x \diff t - \iint_\Omega\ind_D\int_\eps^\delta \frac{\phi}{h}g(u,\tau_hu)\omega_\delta(h)\diff h \diff x \diff t \\
&= \iint_{\Omega}\phi\int_\eps^\delta\frac{g(\tau_{-h}u,u)\tau_{-h}\ind_D - g(u,\tau_hu)\ind_D}{h}\omega_\delta(h)\diff h \diff x \diff t \\
&= \iint_D\phi\int_\eps^\delta\frac{g(\tau_{-h}u,u) - g(u,\tau_hu)}{h}\omega_\delta(h)\diff h \diff x \diff t \\
&\mathrel{\hphantom{=}}+ \iint_{\Omega}\phi\int_\eps^\delta g(\tau_{-h}u,u)\frac{\tau_{-h}\ind_D-\ind_D}{h}\omega_\delta(h)\diff h \diff x \diff t \\
&\eqqcolon F_{\eps,1}+F_{\eps,2}
\end{align*}
We claim that
\begin{align*}
F_{\eps,1}&\to F_1\coloneqq\iint_D\phi\int_0^\delta\frac{g(\tau_{-h}u,u) - g(u,\tau_hu)}{h}\omega_\delta(h)\diff h \diff x \diff t, \\
F_{\eps,2}&\to F_2\coloneqq \int_0^\delta\iint_\Omega\phi g(\tau_{-h}u,u)\frac{\tau_{-h}\ind_D-\ind_D}{h}\diff x\diff t\diff h
\end{align*}
as $\eps\to0$. Indeed, if $0<\eps<\dist(D, \partial U)$ then
\[
|F_{\eps,1}-F_1| \leq 2\|\phi\|_{\mathrm{L}^1}\bigl(\|\partial_1 g(u,u)\|_{\mathrm{L}^\infty}+\|\partial_2 g(u,u)\|_{\mathrm{L}^\infty}\bigr)\big\|\tfrac{\partial u}{\partial x}\big\|_{\mathrm{L}^\infty(U)}\|\omega_\delta\|_{\mathrm{L}^1((0,\eps))} \to 0
\]
as $\eps\to0$. Similarly (letting $\ominus$ denote the symmetric difference between sets),
\begin{align*}
|F_{\eps,2}-F_2| &\leq \int_0^\eps \iint_{(\tau_{-h} D)\ominus D} \frac{1}{h}|\phi||g(\tau_{-h}u,u)|\omega_\delta(h)\diff x \diff t \diff h \\
&\leq \|\phi\|_{\mathrm{L}^\infty}\|g(\tau_\cdot u,u)\|_{\mathrm{L}^\infty}\int_0^\eps h|\partial D| \frac{1}{h}\omega_\delta(h)\diff h\\
&= \|\phi\|_{\mathrm{L}^\infty}\|g(\tau_\cdot u,u)\|_{\mathrm{L}^\infty}|\partial D| \|\omega_\delta\|_{\mathrm{L}^1((0,\eps))} \to 0
\end{align*}
as $\eps\to0$, where $|\partial D|$ denotes the length of $\partial D$.
\end{proof}

\end{appendices}





\subsection*{Acknowledgments}
We want to thank Siddhartha Mishra and Espen Sande for many insightful discussions.

\bibliographystyle{siamplain}

\end{document}